\theoremstyle{definition}
\newtheorem{definition}{Definition}[section]
\theoremstyle{plain}
\newtheorem{theorem}[definition]{Theorem}
\newtheorem{proposition}[definition]{Proposition}
\newtheorem{lemma}[definition]{Lemma}
\newtheorem{corollary}[definition]{Corollary}
\newtheorem{remark}[definition]{Remark}
\newcommand{\qo}{\mathbb{Q}}
\newcommand{\fr}{{}^\frown}
\newcommand{\name}{\dot}
\newcommand{\can}{\check}
\newcommand{\force}{\Vdash}
\newcommand{\la}{\langle}
\newcommand{\ra}{\rangle}
\newcommand{\elem}{\prec}
\newcommand{\uhr}{\restriction}
\newcommand{\dhr}{\downharpoonright}
\newcommand{\power}{\mathcal{P}}
\newcommand{\rar}{\rightarrow}
\newcommand{\po}{\mathbb{P}}
\newcommand{\A}{\mathfrak{A}}
\renewcommand{\iff}{\leftrightarrow}
\newcommand{\lra}{\leftrightarrow}
\newcommand{\Lan}{\mathcal{L}}
\newcommand{\AP}{\mathcal{A}}
\DeclareMathOperator{\SCH}{SCH}
\DeclareMathOperator{\cf}{cf}
\DeclareMathOperator{\SK}{SK}
\DeclareMathOperator{\rng}{rng}
\DeclareMathOperator{\cof}{Cof}
\DeclareMathOperator{\dom}{dom}
\DeclareMathOperator{\GCH}{GCH}
\DeclareMathOperator{\otp}{otp}
\DeclareMathOperator{\tp}{tp}
\DeclareMathOperator{\col}{Coll}
\newcommand{\bP}{\bar{\po}}
\newcommand{\bp}{\bar{p}}
\newcommand{\bq}{\bar{q}}
\newcommand{\bt}{\bar{t}}
\newcommand{\bro}{\bar{\rho}}
\newcommand{\vro}{\vec{\rho}}
\newcommand{\np}[1]{\bar{#1}}   
\newcounter{saveenumi}
\title{On Singular Stationarity II \\(tight stationarity and extenders-based methods)}
\author{Omer Ben-Neria}
\date{}
\begin{document}
	\maketitle
	
	\begin{abstract}
    We study the notion of tightly stationary sets which was introduced by Foreman and Magidor in \cite{ForMag-MS}.
We obtain two consistency results which show that it is possible for a sequence of regular cardinals $\la \kappa_n \ra_{n < \omega}$ to have the property that for every sequence $\vec{S}$, of some fixed-cofinality stationary sets $S_n \subseteq \kappa_n$, $\vec{S}$ is tightly stationary in a generic extension. 
The results are obtained using variations of the short-extenders forcing method.
	\end{abstract}
	
	\section{Introduction}
	This paper is a contribution to the study of singular stationarity.
	We prove two consistency results concerning the notion of tightly stationary sets, which was introduced by Foreman and Magidor in the 1990s.
	Two notions of singular stationarity were introduced and studied in \cite{ForMag-MS}: Mutual Stationarity and Tight Stationarity. Both notions are related to properties of sequences $\la S_i \mid i <\tau\ra$ of sets $S_i \subseteq \kappa_i$ for an increasing sequence of regular cardinals $\la \kappa_i \mid i < \tau\ra$. 
	It is shown that the notion of tight stationarity sets is a strengthening of mutually stationary which satisfies analogs of the well-known Fodor's Lemma and Solovay's splitting theorem for stationary sets of regular cardinals. Foreman and Magidor raised the question of whether every mutually stationary sequence is tightly stationary. 
	Models containing mutually stationary sequences on the cardinals $\la \omega_n \mid n < \omega\ra$ which are not tight were obtained by Cummings, Foreman, and Magidor, and by Steprans and Foreman (\cite{CFM-CanStrII}).
	Chen and Neeman (\cite{CheNee-TS}) obtained a strong global result of a model in which there are mutual and non-tight stationary sequences on every increasing $\omega$-sequence of regular cardinals. Moreover, they show that the property of their model is immune to further forcing by a wide class of natural posets. 
	
	In \cite{BN-MSI}, several consistency results regarding mutually stationary were obtained by the author.
	The goal of this paper is to introduce two positive results concerning tightly stationary sets.
	We construct models which contain a sequence of regular cardinals $\vec{\kappa} = \la \kappa_n \mid n < \omega\ra$ with the property that every sequence $\vec{S} = \la S_n \mid n < \omega\ra$ of stationary sets of some fixed cofinality $S_n \subseteq \kappa_n$ is tightly stationary in a forcing extension. This shows there is no natural indestructible obstruction to constructing models in which the notions of mutual stationarity and tight stationarity coincide. 
	
	The methods we apply to obtain the results are called extender-based forcing methods. They are known for their ability to  generically add scales to products of cardinals 
	$\prod_n \kappa_n$, which are known to be connected with tightly stationary sequences. 
	In \cite{CFM-CanI}, the authors have established many connections between scales and tight structures. 
	These connections have been further studied and extender in \cite{chen-treelikeTS}, and obtained strong failure results of tight stationarity.
	
	Building on the known connections between tight stationarity and scales, we are able to reduce the problem of forcing a sequence $\la S_n \mid n < \omega\ra$ to be tightly stationary to  forcing a scale $\vec{f} =  \la f_\alpha \mid \alpha < \lambda\ra$ in $\prod_n\kappa_n$ with certain properties (i.e., scales with stationarily many good continuous points $\delta < \lambda$ for which $f_\delta(n) \in S_n$ for all but finitely many $n < \omega$).
	This approach leads us to the following two results.
	\begin{theorem}\label{thm1}
		Suppose that $\la \kappa_n \mid n < \omega\ra$ is an increasing sequence of $(+1)$-extendible cardinals. Then for every sequence of fixed-cofinality stationary sets $\vec{S} = \la S_n \mid n < \omega\ra$ with $S_n \subseteq \kappa_n$, there exists a generic extension in which $\vec{S}$ is tightly stationary.
	\end{theorem}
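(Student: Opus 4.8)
The plan is to separate the argument into two parts: a reduction of tight stationarity to the existence of a suitable scale, using the known connections between tight structures and scales, and then a forcing construction, a variant of Gitik's short-extenders method, that produces such a scale.

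\textbf{The reduction.} Write $\kappa = \sup_n \kappa_n$, and let $\mu < \kappa_0$ be the common cofinality of the sets $S_n$. Note that $\cf(\kappa) = \omega$ and, after at most a harmless preparation making $\GCH$ hold at $\kappa$, $\prod_n\kappa_n$ carries a scale of length $\kappa^+$. Using the links between tight structures and scales, it suffices to produce a cardinal-preserving (at least above $\mu$) extension in which each $S_n$ is still stationary in $\kappa_n$ and $\prod_n\kappa_n$ carries a scale $\vec f = \la f_\alpha \mid \alpha < \kappa^+\ra$ with
\[
 \big\{\, \delta < \kappa^+ \ \big|\ \cf(\delta) = \mu,\ \delta \text{ good and continuous for } \vec f,\ \text{and } f_\delta(n) \in S_n \text{ for a.e. } n \,\big\}
\]
stationary in $\kappa^+$: for then, given any algebra on $H_{\kappa^+}$, a $\delta$ in the intersection of its club of closure points with this stationary set yields, by the usual reading-off, a tight structure $N$ with $\sup(N\cap\kappa_n) = f_\delta(n)\in S_n$ for all $n$, so $\vec S$ is tightly stationary.

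\textbf{The extenders.} Here I would use $(+1)$-extendibility. For each $n$ fix elementary $j_n : V_{\kappa_n+1} \to V_{j_n(\kappa_n)+1}$ with $\cp(j_n) = \kappa_n$ and $j_n(\kappa_n)$ as large as needed. The essential gain over plain supercompactness is that the target is an honest $V_{j_n(\kappa_n)+1}$, hence computes cofinalities and stationarity below $j_n(\kappa_n)$ correctly; in particular $j_n(S_n)$ is a stationary subset of $j_n(\kappa_n)$ concentrating on cofinality $\mu$. Let $E_n$ be the $(\kappa_n,\lambda_n)$-extender derived from $j_n$ for suitably large $\lambda_n \le j_n(\kappa_n)$. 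I want the system $\la E_n \mid n<\omega\ra$ arranged so that (i) the $E_n$ cohere, with commuting projections between their index sets, and carry enough generators for the short-extenders forcing over $\la E_n\ra$ to add a scale in $\prod_n\kappa_n$ of length $\kappa^+$; and (ii) in the common index set there are stationarily many indices $\bar\xi$ of cofinality $\mu$ whose $n$-th projection lies in $j_n(S_n)$ for a.e.\ $n$. Property (ii), gotten from the stationarity of the $j_n(S_n)$ by a diagonal intersection over $n$, is the point: if the $n$-th projection of $\bar\xi$ lies in $j_n(S_n)$ then $S_n$ belongs to the ultrafilter on $\kappa_n$ governing that coordinate, so the generic value placed there is forced into $S_n$.

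\textbf{The forcing and the main obstacle.} Let $\po$ be the appropriate variant of the short-extenders forcing over $\la E_n\ra$. The routine part is the basic theory of $\po$: it is $\kappa_0$-c.c., satisfies a Prikry-type property with a sufficiently closed pure part, all cardinals $\ge\mu$ are preserved, a scale $\vec f$ of length $\kappa^+$ in $\prod_n\kappa_n$ is added, and the standard analysis shows each $S_n$ stays stationary in $\kappa_n$. The heart of the proof is that the displayed set is forced stationary: for a limit index $\bar\xi$ as in (ii) the associated scale point $\delta=\delta(\bar\xi)$ has cofinality $\mu$; the generic extender-indiscernibles cofinal in $\delta$ witness that $\delta$ is good; a density argument makes the generic value map continuous at $\bar\xi$ on a club of coordinates, so $\delta$ is a continuity point and $f_\delta(n)=\sup_{\xi\to\bar\xi}\rho_n(\xi)=\rho_n(\bar\xi)\in S_n$ for a.e.\ $n$; and a Menas-style reflection argument — using that $\po$ restricted below each coordinate is absorbed by a small forcing — shows stationarily many such $\delta$ survive. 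I expect the main obstacle to be exactly this synthesis: building a single generic object that is at once a genuine scale of length $\kappa^+$ with abundant good continuous points, routes those points through $\vec S$, and preserves both the stationarity of every $S_n$ and the cardinal structure. The tension is genuine: being a continuity point forces $f_\delta(n)$ to be a supremum, and a general stationary $S_n$ need not be closed under the relevant suprema, so one cannot just demand $f_\alpha(n)\in S_n$ along the way — it is the continuity-on-a-club of the generic value map together with the choice of limit generators in (ii) that reconciles this, and it is the interplay of forcing values into $S_n$ with keeping $S_n$ stationary that fixes the precise closure and chain-condition requirements on $\po$.
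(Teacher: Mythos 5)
Your overall architecture coincides with the paper's (reduce tight stationarity to a scale with many good/continuous points routed through $\vec S$, then force with a short-extenders poset built from long extenders, using $(+1)$-extendibility to see that $j_n(S_n)$ is genuinely stationary and that $\delta_n\in j_n(S_n)$ makes $S_n\in E_n(\delta_n)$), and that last mechanism is indeed the engine of the actual proof. But the reduction as you state it has a gap: you require only that $\delta$ be a \emph{good} continuous point of cofinality $\mu$ and assert that "the usual reading-off" produces a tight structure $N$ with $\sup(N\cap\kappa_n)=f_\delta(n)$. Goodness is what one extracts \emph{from} a tight structure, not what lets one build it. The construction needs $\delta$ to be \emph{approachable}: a cofinal $D\subseteq\delta$ of order type $\mu$ all of whose initial segments appear in a fixed enumeration $\vec a\uhr\delta$, so that (Foreman--Magidor) one can assemble inside a structure $M_\delta$ with $M_\delta\cap\lambda=\delta$ an internally approachable chain of $\mu$-sized hulls; internal approachability is what yields tightness, and then $\chi_N$ being an eub of $\vec f\uhr\delta$ plus continuity gives $\chi_N=^*f_\delta$. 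Simply hulling the witnessing sequence of a good point controls neither $\sup(N\cap\kappa_n)$ nor tightness. The paper supplies the missing ingredient via Shelah's theorem (stationarily many approachable points of cofinality $\mu$ below the successor $\lambda=\kappa_\omega^{++}$, the length of the generic scale -- not $\kappa_\omega^{+}$) together with the chain condition to keep that set stationary; your sketch should be repaired the same way.

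On the forcing side the step you yourself call "the heart" is left as a synthesis, and the auxiliary claims you do make are off. The poset is nowhere near $\kappa_0$-c.c.; what holds is that the quotient order $(\po,\rar)$, obtained from Gitik's type-equivalence on the assignment functions, is $\kappa_\omega^{++}$-c.c., while preservation of each stationary $S_n\subseteq\kappa_n$ comes from the Prikry property and the closure of the direct-extension order (no new bounded subsets of $\kappa_\omega$), not from a chain condition at $\kappa_0$. There is no "common index set" for the different $E_n$ and no meaningful diagonal intersection across $n$: the common name space is $\kappa_\omega^{++}$ itself, and the correct argument is a direct-extension density argument which, above any condition and any ground-model club $C$, picks an approachable $\delta\in C\cap\cof(\mu)$ above the condition's support and, for each $n$, extends $a_n$ so that $\delta$ is sent to an ordinal $\delta_n\in j_n(S_n)$ which is $n$-good and a limit of a continuous sequence of $n$-good ordinals (the images of a sequence cofinal in $\delta$), adds a $\leq_{E_n}$-upper bound on top, and shrinks the measure-one set so that the projection to $\delta_n$ lands in $S_n$ and the projections to the $\delta_n(i)$ are increasing, continuous and cofinal; this single condition forces $\delta$ to be a continuity point with $t_\delta(n)\in S_n$ for almost all $n$. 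Stationarity of the resulting set of $\delta$ in the extension is then handled by the $\kappa_\omega^{++}$-c.c. of $(\po,\rar)$ (every club of $\lambda$ in the extension contains one from $V$); the Menas-style "absorption by a small forcing" you invoke has no analogue for this poset. Finally, the requirement that the chosen indices be $n$-good is not cosmetic: it is what keeps the conditions legal and the chain-condition argument working, and it is exactly the second place where $(+1)$-extendibility is used -- since the target is the true $V_{j_n(\kappa_n)+1}$, the set $j_n(S_n)$ is stationary in $V$ and therefore meets the club of good ordinals. Your proposal correctly identifies why extendibility (rather than mere superstrength of $E_n$) is needed, but the bookkeeping surrounding that insight would have to be replaced by the argument above.
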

	
	\begin{theorem}\label{thm2}
		It is consistent relative to the existence of a sequence $\la \kappa_n \mid n < \omega\ra$ of cardinals $\kappa_n$ which are $\kappa_n^{+n+3}$-strong, that there is a model with a subset $\la \omega_{s_n} \mid n < \omega\ra$ of the $\omega_n$'s such that every fixed-cofinality sequence of stationary sets $S_n \subseteq \omega_{s_n}$ is tightly stationary in a generic extension.
	\end{theorem}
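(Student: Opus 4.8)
The plan is to run the analogue of the argument for Theorem~\ref{thm1}, replacing the forcing used there by a variant of the short-extenders forcing whose generic both collapses the gap intervals $(\kappa_{n-1},\kappa_n)$ --- so that the large cardinals $\kappa_n$ are consumed and reappear in the extension as members $\omega_{s_n}$ of the $\omega_n$-sequence --- and adds a scale in $\prod_n\kappa_n$. Recall from the introduction that the task reduces to producing a scale $\vec f=\la f_\alpha\mid\alpha<\lambda\ra$ in $\prod_n\kappa_n$ with stationarily many good continuous points $\delta<\lambda$ at which $f_\delta(n)\in S_n$ for all but finitely many $n$. So there are two things to do: (i) force a model $V_1$ in which the $\kappa_n$ become small cardinals $\omega_{s_n}$, the product $\prod_n\kappa_n$ carries a scale of length $\lambda=\kappa^+$, and $\kappa^+$ is preserved, where $\kappa:=\sup_n\kappa_n$ becomes $\aleph_{s_\omega}$ for $s_\omega:=\sup_n s_n$; and (ii) show, in $V_1$, that every fixed-cofinality stationary sequence $\vec S$ can be made to be hit by the good continuous points of a scale after a further, mild forcing.

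For (i) I would fix for each $n$ an extender $E_n$ on $\kappa_n$ with $V_{\kappa_n^{+n+3}}\subseteq M_n:=\Ult(V,E_n)$, run a standard preparatory iteration (arranging that $2^{\kappa_n}=\kappa_n^+$, that $\GCH$ holds above $\kappa$, and that the $E_n$ survive what follows), and then force with $\po$: a ``diagonal'' short-extenders poset built from the $E_n$'s whose conditions also carry Prikry-style collapsing information for the intervals $(\kappa_{n-1},\kappa_n)$. The properties of $\po$ that I need --- the Prikry property, preservation of $\kappa^+$ and of the cardinals outside the collapsed intervals, the identification of each $\omega_{s_n}$, and the fact that $\po$ adds a scale $\vec f$ of length $\kappa^+$ in $\prod_n\kappa_n$ --- are precisely the extenders-based machinery developed above, which I will invoke rather than reprove.

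For (ii) I would use the slack in the hypothesis: the strength $\kappa_n^{+n+3}$ exceeds what the underlying short-extenders forcing needs for the bare scale, and the surplus is spent on engineering $\vec f$ so that its good continuous points are ``spread out'' --- for each regular $\mu<\omega_{s_0}$ the set $G^\mu$ of good continuous points of cofinality $\mu$ is stationary in $\kappa^+$, and for $\delta\in G^\mu$ the value $f_\delta(n)$ is governed by the $E_n$-generators below $\delta$ and can be steered into prescribed stationary targets. Given a fixed-cofinality stationary $\vec S\in V_1$ with $S_n\subseteq\{\beta<\omega_{s_n}\mid\cf(\beta)=\mu\}$, set $T_{\vec S}=\{\delta\in G^\mu\mid\forall^\infty n\ f_\delta(n)\in S_n\}$. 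If $T_{\vec S}$ is already stationary in $V_1$, the reduction recalled above shows $\vec S$ is tightly stationary there; otherwise I would define in $V_1$ a forcing $\mathbb Q_{\vec S}$ whose conditions are bounded closed partial approximations to a club $E$ of good continuous points together with a coherent re-threading of the scale along $E$ making $f_\delta(n)\in S_n$ for all but finitely many $n$ at each $\delta\in E$; using the fatness delivered by the spread-out scale one checks that $\mathbb Q_{\vec S}$ is $\kappa^+$-distributive, so collapses no cardinals, and that its generic yields a scale, equal to $\vec f$ on a club, whose good continuous points hitting $\vec S$ contain a club. By the reduction from the introduction, $\vec S$ is then tightly stationary in $V_1[\mathbb Q_{\vec S}]$, which is what the theorem asserts.

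The hard part will be (ii), and in particular the calibration of the scale's genericity: one must show the good continuous points of $\vec f$ can be steered, after a distributive forcing, to realize an arbitrary stationary target in each coordinate $\omega_{s_n}$ while preserving $\kappa^+$. This calls for a delicate density and amalgamation analysis at the level of the extenders $E_n$ --- controlling which ordinals below $\kappa_n^{+n+3}$ can appear as coordinates of the generic functions at a point of cofinality $\mu$, and showing these are dense enough to guarantee the fatness used above --- and it is exactly here that the surplus in the strength hypothesis is consumed. The short-extenders forcing $\po$ is itself combinatorially heavy (the Prikry property and cardinal preservation being the usual pressure points), but that burden is carried by the machinery of the earlier sections.
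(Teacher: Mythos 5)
Your step (ii) is where the proposal breaks, and the problem is not a missing lemma but an impossibility in the architecture you chose. Once the model $V_1$ and the scale $\vec f$ are fixed, a $\kappa^+$-distributive poset $\mathbb Q_{\vec S}$ adds no new elements of $\prod_n\kappa_n$, so it changes neither the continuity points of $\vec f$ nor the values $f_\delta$; and a set $T_{\vec S}$ that is nonstationary in $V_1$ is witnessed to be so by a club of $V_1$, which remains a club in any cardinal-preserving extension, so no further forcing can make it stationary. Nor can ``re-threading'' help: at a common continuity point $\delta$ of cofinality $\mu$, any scale agreeing with $\vec f$ on a club of indices below $\delta$ has the same exact upper bound modulo finite, so its value at $\delta$ cannot be steered into $\vec S$ if $f_\delta$ was not already there. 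Hence the entire content of the theorem must be front-loaded into the construction of the scale itself --- one has to guarantee, at the time the scale is added, that stationarily many of its good continuity points land in every prescribed stationary sequence --- and your plan defers exactly this (``if $T_{\vec S}$ is already stationary \dots otherwise \dots'') to a fallback that cannot work. A second, related miscalibration: from $\kappa_n^{+n+3}$-strong embeddings the derived extenders have length $\kappa_n^{+n+2}$, and the short-extenders generic scale then lives on $\prod_n\rho_n^{+n+2}$, the successors of the diagonal Prikry points (Lemma \ref{lem-genericscaleproduct}), not on $\prod_n\kappa_n$; a controlled generic scale on $\prod_n\kappa_n$ is what the long-extender argument of Theorem \ref{thm1} buys from $(+1)$-extendibility. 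This is precisely why the paper's $\omega_{s_n}$ are the $\rho_n^{+n+2}$ and why they are ``non-canonical.''

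The paper's proof is structured to meet the obstruction above. The stationary sequences $\vec S$ live in the intermediate extension $V[\bar G]$ by the diagonal Prikry-with-collapses projection $\bar\po$, where $\omega_{s_n}=\rho_n^{+n+2}$, and the ``further generic extension'' is the full short-extenders extension $V[G]$ --- the forcing that adds the scale --- so that a density argument over $V$ (using the ${\kappa_\omega^{++}}$-c.c.\ of $(\po,\rar)$) can arrange that for every club $C$ and every condition there is an extension forcing some $\delta\in C\cap S_{\vec a^\mu}$ to be a continuity point with $t_\delta(n)\in\name{S_n}$ for almost all $n$. Carrying this out against sets $S_n$ that exist only as names requires the three ingredients your outline lacks: the reduction of $\name{S_n}$ to names over the restricted collapse products $\qo'(\vro_{n+1})$ (Lemma \ref{lem-meetCollapse}, Corollary \ref{cor-PbarSn}); the modification of the forcing so that the assignment functions $a_n$ and measure-one sets $A_n$ are functions of the initial Prikry segment $\vro_n$, allowing the chosen extender indices $\delta_n^{\vro_n}$ to vary with that segment; and the surplus strength, used not to ``spread out'' continuity points but to see that $j_n(\bar S_n)(\vro_n\fr\la\kappa_n\ra)$ names a stationary subset of $\kappa_n^{+n+2}$ in $V$ (since $N_n$ contains all clubs of $\kappa_n^{+n+2}$), hence one containing $n$-good ordinals eligible to enter $\rng(a_n)$. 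The ``delicate density and amalgamation analysis'' you set aside as the hard part is exactly this construction, so the proposal as it stands does not constitute a proof.
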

	
	Although it might seem, at first glance, that the second theorem is superior to the first one in every parameter, the second theorem provides a result which is less canonical in the following sense:
	As opposed to the first theorem, where the sequence of cardinal $\la \kappa_n \mid n < \omega\ra$ is given in advance, the sequence of cardinals $\la \omega_{s_n}\mid n < \omega\ra$ in the second theorem does not exist in the minimal ground model (i.e., in the core model $K$ or the mantle of the final generic extension) but rather obtained as a Prikry generic sequence in some intermediate extension.
	
	Both theorems are obtained using variants of the short-extenders forcing method of Gitik (\cite{Gitik-EBF1} and \cite{GitUng-SEF}). 
	To prove Theorem \ref{thm1}, we apply the short-extenders method to a sequence of long extenders\footnote{i.e., a $(\kappa_n,j_n(\kappa_n))$-extender derived from an embedding $j_n$ with critical point $\kappa_n$.} and argue that if the extenders are derived from $(+1)$-extendible embeddings then every stationary sequence in the ground model is tight in the generic extension.
	For Theorem \ref{thm2}, we force with variants of the short-extenders forcing where the $n$th assignment function $a_n$ depends on the generic information of the previous Prikry points of the normal generators. \\

	\textbf{Organization of the paper - }  In Section \ref{section-pre}, we review relevant results which connect the notions of approachable sets, tight structures, and scales. The description follows the work of Cummings, Foreman, and Magidor from \cite{CFM-CanI}. We conclude Section \ref{section-pre} with a result (Proposition \ref{prop-IAmain}) which reduces the problem of obtaining tightly stationary sequences to the existence of a certain scale.
	In Section \ref{section-thm1}, we describe Gitik's extenders-based forcing and its main properties. We then show that a long-extender variant of this forcing produces a desirable scale and prove Theorem \ref{thm1}.
	Finally, in Section \ref{section-thm2}, we introduce a variant of the short-extenders forcing which allows us to obtain a similar construction below $\aleph_\omega$ and prove Theorem \ref{thm2}.${}$\\
	
	Our notations are (hopefully) standard with the exception that our forcing convention follows the Jerusalem forcing convention. This means that for two conditions $p,q$ in poset $(\po,\leq)$, the fact $p$ is stronger than $q$ (i.e., it is more informative) will be denoted by $p \geq q$. 
	
	\section{Preliminaries}\label{section-pre}
	
	The purpose of this section is to define the notions of tight structures and tightly stationary sequences and review their connections with the existence of scales with many good points. 
	The connections go through the notions of internally approachable structures and results by Cummings, Foreman, Magidor, and Shelah.
	Our presentation follows the description of \cite{CFM-CanI}, and we refer the reader to this paper for an extensive treatment of the subject. 
	
	We commence by defining the notions of tight structures and tightly stationary sequences from \cite{ForMag-MS}.
	
	\begin{definition}
		\begin{enumerate}
			\item 
			Let $\vec{\kappa} = \la \kappa_n \mid n < \omega\ra$ be an increasing sequence of regular cardinals and $\A$ an algebra expanding $\la H_\theta,\in,<_\theta\ra$ for some regular cardinal $\theta > \cup_n \kappa_n$. A subalgebra $M \elem \A$ is called tight for $\vec{\kappa}$ if $\vec{\kappa} \in M$ and for every $g \in \prod_n (M \cap \kappa_n)$ there exists $f \in M \cap \prod_n \kappa_n$ such that $g(n) < f(n)$ for all $n < \omega$.
			
			\item
			A stationary sequence in $\vec{\kappa}$ is a sequence $\vec{S} = \la S_n \mid n < \omega\ra$ so that $S_n \subseteq \kappa_n$ is stationary for every $n < \omega$.
			$\vec{S}$ is tightly stationary if for every algebra $\A$ there exists a tight substructure $M \elem \A$ such that $\sup(M \cap \kappa_n) \in S_n$ for every $n< \omega$ (we say that $M$ meets $S_n$).
		\end{enumerate}
	\end{definition}
	It is not difficult to see that in the definition of a tight structure, we can replace the last requirement with a slightly weaker one, which demands that for
	every $g \in \prod_n (M \cap \kappa_n)$ there exists $f \in M \cap \prod_n \kappa_n$ such that $g(n) < f(n)$ for all but finitely many $n < \omega$ (also denoted by $g<^*f$).
	
	We will focus on the case where the sets $S_n$ in $\vec{S}$ consist of ordinals of some fixed cofinality. 
	That is, sequences $\vec{S}$ for which there exists some regular $\mu < \cup_n \kappa_n$, 
	such $S_n$ is defined for every $n$ with $\mu < \kappa_n$ and $S_n \subseteq \kappa_n \cap \cof(\mu)$. 
	To show that a certain sequence $\vec{S}$ is tightly stationary we will show that every algebra $\A$ has a tight subalgebra $M \elem \A$ of size $|M| = \mu$, such that 
	$\sup(M \cap \kappa_n) \in S_n$ for all but finitely many $n < \omega$. 
	Obtaining this suffices to show that $\vec{S}$ is tightly stationary since by 
	a well-known argument of Baumgartner (\cite{Baum}), adding ordinals to $M$ below a cardinal $\kappa_i$ does not change its supremum below any regular cardinal $\kappa > \kappa_i$ in $M$. 
	The same argument shows that this addition does add a new function $f \in \prod_n \kappa_n$ which dominates every function in $M \cap \prod_n \kappa_n$ in the $\kappa_{\omega} = \cup_n\kappa_n$ directed order $<^*$, which is defined by $f <^* g$ if and only if $f(n) < g(n)$ for all but finitely many $n < \omega$.
	It follows that for every finite sequence of stationary sets $S_m, S_{m+1},\dots,S_k$ with $S_i \subseteq \kappa_i \cap \cof(\mu)$, there exists an elementary extension $M'$ of 
	$M$, which meets $S_n$ for all $n$ with $\kappa_n > \mu$, and further satisfies
	\begin{itemize}
		\item $\sup(M' \cap \kappa_n) =\sup(M \cap \kappa_n)$ for almost all $n < \omega$.
		\item Every function in $M' \cap \prod_n \kappa_n$ is dominated in $<^*$ by a function in $M \cap \prod_n \kappa_n$.
	\end{itemize}
	The last, combined with the fact $M$ is tight, guarantees $M'$ is also tight. 
	
	\begin{remark}
		The same considerations show that the ideal-based methods which were used in \cite{BN-MSI} to obtain mutually stationary sequences are not useful in the context of tight stationarity. The reason is that substructures $M \elem \A$ constructed in $\cite{BN-MSI}$ are limits of $\omega$-chains of structures, $M = \cup_n M_n$, 
		where $M_0$ is a tight structure, and for each $n < \omega$, $M_{n+1}$ is obtained from $M_n$  by adding a family $s(\alpha)$ of sets of ordinals below $\kappa_{n+1}$ (i.e, $M_{n+1}  = \SK^{\A}(M_n \cup s(\alpha))$) so that $M_{n+1} \cap \kappa_n = M_n \cap \kappa_n$ and $\sup(M_{n+1} \cap \kappa_{n+1}) \in S_{n+1}$. 
		While it is clear that $\sup(M_n \cap \kappa_n) > \sup(M_0 \cap \kappa_n)$ for almost all $n$,  it is possible to show by induction on $n$, using Baumgartner's argument, that $M_0 \cap \prod_n \kappa_n$ is $<^*$-cofinal in the product $M_k \cap \prod_n \kappa_n$ for all $k$, and thus also in $M \cap \prod_n \kappa_n$. 
		Consequently, the functions in $M \cap \prod_n \kappa_n$, which are dominated by the functions in $M_0 \cap \prod_n \kappa_n$, cannot dominate all function in the product $\prod_n (M \cap \kappa_n)$, which is strictly bigger than the product 
		$\prod_n (M_0 \cap \kappa_n)$.
	\end{remark}
	
	We proceed to describe the connection between tight structures, approachable ordinals, and scales.

	\subsection{Internally approachable structures}
	
	\begin{definition}
		Let $\A$ be an algebra expanding $\la H_\theta,\in,<_\theta\ra$ for some regular cardinal $\theta$.
		\begin{enumerate}
			\item A sequence $\vec{M} = \la M_i \mid i < \rho\ra$ of substructures $M_i \elem \A$ is called an internally approachable chain if it is $\subseteq$-increasing and continuous\footnote{i.e., $M_i \subseteq M_j$ for all $i < j$, and $M_\gamma = \bigcup_{i<\gamma}M_i$ if $\gamma < \rho$ is a limit ordinal.}, and for every successor ordinal $j < \delta$, $\vec{M}\uhr j = \la M_i \mid i < j\ra$ belongs to $M_j$.
			\item A substructure $M \elem \A$ is called internally approachable (IA) if there exists an IA chain $\la M_i \mid i < \rho\ra$ such that $M = \bigcup_{i < \rho}M_i$. 
		\end{enumerate}
	\end{definition}
	
	We refer to $\rho$ in the definition as the length of the IA chain. 
	
	
	Internally approachable structures satisfy many natural properties. We list three.
	\begin{theorem}[\cite{CFM-CanI}]
		Suppose that $\delta$ is a regular uncountable cardinal and that $M$ is a limit of an IA chain $\vec{M} = \la M_i \mid i < \delta\ra$. Then
		\begin{enumerate}
			\item $\delta \subseteq M$.
			\item For every regular cardinal $\kappa  < \delta$ in $M$, $\cf(M \cap \kappa) = \delta$. 
			\item 
			Suppose that $\vec{\kappa} = \la \kappa_n \mid n < \omega\ra$ is an increasing sequence of regular cardinals. If $\vec{\kappa} \in M$ and $|M| < \kappa_0$ then every function in $\prod_n (M \cap \kappa_n)$ is pointwise dominated by a function in $M \cap \prod_n \kappa_n$. Therefore, $M$ is tight for $\vec{\kappa}$.
		\end{enumerate}
	\end{theorem}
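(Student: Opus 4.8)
The plan is to prove the three clauses in order, each time relying on the single feature that distinguishes an IA chain from an arbitrary increasing, continuous chain of elementary submodels: that $\vec{M}\uhr j \in M_j$ for every successor $j<\delta$, equivalently that $M_i \in M_{i+1}$ for all $i<\delta$. One remark is used throughout: since $\subseteq$-comparable elementary submodels of $\A$ are automatically elementarily comparable, the $M_i$ form an elementary chain, so $M=\bigcup_{i<\delta}M_i$ is itself elementary in $\A$; hence $M$, like each $M_i$, is closed under the functions definable in $\A$ and computes ordinal predecessors, domains of functions, and suprema of bounded sets of ordinals correctly. For clause (1) I would show by induction on $j<\delta$ that $j\subseteq M$: the limit case is immediate since $j=\bigcup_{i<j}i$ with each $i\subseteq M$ by the inductive hypothesis, and if $j=i+1$ then $\vec{M}\uhr j\in M_j$, so the ordinal $\dom(\vec{M}\uhr j)=j$ lies in $M_j$ and hence so does its immediate predecessor $i$, which with the inductive hypothesis $i\subseteq M$ gives $j=i\cup\{i\}\subseteq M$. (This uses that $\delta$ is a limit ordinal, so that $i+1<\delta$ and the requirement $\vec{M}\uhr(i+1)\in M_{i+1}$ is in force.)

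For clause (2) --- whose content is the case $\kappa>\delta$, since item (1) already puts every ordinal below $\delta$ into $M$ --- fix $j_0<\delta$ with $\kappa\in M_{j_0}$ and consider the sequence $\la\sup(M_i\cap\kappa)\mid j_0\le i<\delta\ra$. It is continuous at limits because the chain is, and it is strictly increasing: since $|M_i\cap\kappa|\le|M|<\kappa=\cf(\kappa)$ --- the one place the cardinality bound $|M|<\kappa$ is needed --- the set $M_i\cap\kappa$ is bounded in $\kappa$, and since $M_i\in M_{i+1}$ and $\kappa\in M_{i+1}$ the model $M_{i+1}$ names $\sup(M_i\cap\kappa)$, which is then an element of $M_{i+1}\cap\kappa$ strictly above $\sup(M_i\cap\kappa)$. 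A strictly increasing, continuous sequence of order type $\delta$ is cofinal in its supremum and witnesses that the supremum has cofinality $\cf(\delta)=\delta$; as $\sup(M\cap\kappa)=\sup_{i<\delta}\sup(M_i\cap\kappa)$, we conclude $\cf(M\cap\kappa)=\delta$.

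For clause (3), let $g\in\prod_n(M\cap\kappa_n)$. Since $\delta$ is regular and uncountable while $g$ has only $\omega$ many values, each lying in some $M_i$, there is $i^*<\delta$ with $g(n)\in M_{i^*}$ for all $n$, and after enlarging $i^*$ we may also assume $\vec\kappa\in M_{i^*}$. Then $M_{i^*}\in M_{i^*+1}$ and $\vec\kappa\in M_{i^*+1}$, and since $|M_{i^*}|\le|M|<\kappa_0\le\kappa_n=\cf(\kappa_n)$ each $M_{i^*}\cap\kappa_n$ is bounded in $\kappa_n$; hence the function $f\colon n\mapsto\sup(M_{i^*}\cap\kappa_n)+1$ is definable in $M_{i^*+1}$ from the parameters $M_{i^*}$ and $\vec\kappa$, so that $f\in M_{i^*+1}\cap\prod_n\kappa_n\subseteq M\cap\prod_n\kappa_n$ and $g(n)\le\sup(M_{i^*}\cap\kappa_n)<f(n)$ for every $n$. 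Thus $f$ pointwise dominates $g$, and together with $\vec\kappa\in M$ this is precisely the assertion that $M$ is tight for $\vec\kappa$.

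I expect the difficulty to be bookkeeping rather than conceptual. The two delicate points are: keeping the supremum of each small slice $M_i\cap\kappa$ (resp.\ $M_{i^*}\cap\kappa_n$) strictly below $\kappa$ (resp.\ $\kappa_n$), so that the next model in the chain can name it --- this is what forces the cardinality restrictions --- and, in clause (3), trapping all $\omega$ values of $g$ inside a single model of the chain, which is exactly where $\cf(\delta)>\omega$ is indispensable. That last point is also the reason, as the remark above explains, that IA chains of length $\omega$ --- the structures produced by the ideal-based constructions of \cite{BN-MSI} --- may fail to be tight, whereas IA structures of uncountable length cannot.
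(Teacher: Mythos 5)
Your proof is correct and follows the same route as the paper: for item (3) --- the only clause the paper argues rather than simply cites from \cite{CFM-CanI} --- you use exactly its argument, trapping the range of $g$ in a single $M_{i^*}$ via $\cf(\delta)>\omega$ and dominating it by (a successor of) the characteristic function of $M_{i^*}$, which lies in $M$ because $M_{i^*}\in M_{i^*+1}$; items (1) and (2) are the standard arguments from that reference. Your reading of clause (2) as the case $\kappa>\delta$, with the implicit bound $|M|<\kappa$ (automatic in the paper's applications, where $|M|=\delta<\kappa$), correctly repairs what is evidently a typo in the stated inequality.
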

	Regarding the third statement, we note that since $M$ is a limit of an IA chain $\vec{M} = \la M_i \mid i < \delta\ra$ of uncountable length $\delta$, then the range of every function $f \in \prod_n (M \cap \kappa_n)$ is contained in $M_i$ for some $i <\delta$. 
	Thus $f$ is dominated by the characteristic function $\chi_{M_i}^{\vec{\kappa}}$ of $M_i$, defined by 
	$\chi_{M_i}^{\vec{\kappa}}(n) = \sup(M_i \cap \kappa_n)$. This function clearly belongs to $M \cap \prod_n \kappa_n$ since $M_i$ does.

	\subsection{Existence of IA structures}
	We state two results by Shelah and by Foreman and Magidor, which together, guarantee the existence of many ordinals $\delta$ which are of the form $\sup(N \cap \lambda)$ form some IA structure $N$ of size $|N| = \cf(\delta)$.
	
	\begin{definition}\label{def-IApoints}
		Let $\lambda$ be a regular cardinal and $\vec{a} = \la a_\nu \mid \nu < \lambda\ra$ be a sequence of bounded subsets of $\lambda$. A limit ordinal $\delta < \lambda$ is said approachable with respect to $\vec{a}$ if there is a cofinal subset $D \subseteq \delta$ of minimal ordertype $\otp(D) = \cf(\delta)$, such that for every $\beta < \delta$, $D \cap \beta \in \vec{a}\uhr\delta = \la a_\nu \mid \nu < \delta\ra$. We denote the set of approachable ordinals with respect to $\vec{a}$ by $S_{\vec{a}}$.
	\end{definition}
	

	\begin{theorem}[Shelah,\cite{She-CarAri}]\label{thm-ShelahIA}
		Suppose that $\lambda = \eta^+$ is a successor cardinal. Then for every regular cardinal $\mu < \eta$ there exists a sequence $\vec{a} \subseteq [\lambda]^{<\lambda}$ 
		such that $S_{\vec{a}} \cap \cof(\mu)$ is stationary in $\lambda$.
	\end{theorem}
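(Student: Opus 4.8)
\emph{Proof proposal.} Unwinding Definition~\ref{def-IApoints}, what we must produce is a single sequence $\vec a = \la a_\nu \mid \nu < \lambda\ra$ of bounded subsets of $\lambda$, together with a club $E \subseteq \lambda$ and a stationary $S \subseteq E \cap \cof(\mu)$, such that every $\delta \in S$ is approachable with respect to $\vec a$, i.e. $S \subseteq S_{\vec a}$. When $\mu = \aleph_0$ this is easy and uses nothing about $\lambda$: if $\vec a$ enumerates all finite subsets of $\lambda$ in a coherent fashion, then \emph{every} $\delta$ of cofinality $\aleph_0$ is approachable, because the relevant initial segments $D \cap \beta$ of a cofinal $\omega$-sequence $D$ are finite. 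The content is the uncountable case, where one cannot simply let $\vec a$ list all bounded subsets of $\lambda$ of size $< \mu$ (there may well be more than $\lambda$ of them); one must instead enumerate only a carefully chosen family of size $\le \lambda$, and it is in making this possible that the hypothesis $\lambda = \eta^+$, i.e. $|\gamma| \le \eta$ for all $\gamma < \lambda$, first enters.

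The plan is to harvest both the points of $S$ and the sets $D$ witnessing their approachability from one fixed tower of elementary submodels. Fix a large regular $\chi$, a well-order $<^*$ of $H_\chi$, collapsing surjections $e_\gamma \colon \eta \to \gamma + 1$ for $\gamma < \lambda$, and work in $\A = \la H_\chi, \in, <^*, \lambda, \mu, \la e_\gamma\ra\ra$. Build recursively a continuous $\in$-increasing chain $\vec M = \la M_i \mid i < \lambda\ra$ of elementary substructures of $\A$ with $\eta + 1 \subseteq M_0$, $|M_i| = \eta$, and $\vec M\uhr(i+1) \in M_{i+1}$. Set $\delta_i = \sup(M_i \cap \lambda)$; then $\delta_i \ge \eta$ for all $i$, the set $E_0$ of $i$ with $M_i \cap \lambda \in \lambda$ (equivalently $M_i \cap \lambda = \delta_i$) contains a club, and on $E_0$ the map $i \mapsto \delta_i$ is strictly increasing and continuous. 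Since $\la\delta_j \mid j < i\ra$ is strictly increasing, continuous and cofinal in $\delta_i$, we get $\cf(\delta_i) = \cf(i)$; in particular $\cf(\delta_i) = \mu$ whenever $\cf(i) = \mu$. Let $Q_i = M_i \cap \power(\delta_i)$; then $|Q_i| \le \eta$, so $|\bigcup_{i < \lambda} Q_i| \le \lambda$. Enumerate $\bigcup_i Q_i$ as $\vec a$ by a routine bookkeeping argument (each $M_i$ contributes at most $\eta$ sets, and $\delta_i \ge \eta$ with $|i| \le \eta$ leaves room) arranged so that $\{a_\nu \mid \nu < \delta_i\} \supseteq \bigcup_{k < i} Q_k$ for all $i$ in a club; let $E$ be the corresponding club of $\delta_i$'s and $S = \{\delta_i \in E \mid \cf(i) = \mu\}$, a stationary subset of $\cof(\mu)$.

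Now fix $\delta = \delta_i \in S$; we check $\delta \in S_{\vec a}$. Suppose — as will be arranged when building $\vec M$ — that there is a cofinal $c \subseteq i$ of order type $\mu$ that is \emph{internally coherent}: $c \cap j \in M_{j+1}$ for every $j \in c$. Put $D = \{\delta_j \mid j \in c\}$; since $j \mapsto \delta_j$ is strictly increasing and continuous, $D$ is cofinal in $\delta$ of order type $\mu = \cf(\delta)$. Given $\beta < \delta$, let $j_1 = \min\{j \in c \mid \delta_j \ge \beta\}$, which exists and lies below $i$; then $D \cap \beta = \{\delta_j \mid j \in c \cap j_1\}$. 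The sequence $\la\delta_j \mid j \le j_1\ra$ is computed from $\vec M\uhr(j_1+1) \in M_{j_1+1}$, and $c \cap j_1 \in M_{j_1+1}$ by internal coherence, so $D \cap \beta \in M_{j_1+1} \cap \power(\delta_{j_1+1}) = Q_{j_1+1}$ with $j_1 + 1 < i$; hence $D \cap \beta \in \bigcup_{k<i} Q_k \subseteq \{a_\nu \mid \nu < \delta\} = \vec a\uhr\delta$. Thus $D$ witnesses $\delta \in S_{\vec a}$, and letting $\delta$ range over $S$ completes the proof.

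The real obstacle is the italicized clause: a cofinal $c \subseteq i$ chosen naively inside $M_{i+1}$ need not satisfy $c \cap j \in M_{j+1}$, and no uniform choice can, since one cannot consistently make \emph{every} ordinal of cofinality $\mu$ approachable with respect to a single $\vec a$. What saves the argument is that only \emph{stationarily} many $\delta$ are required, which leaves enough freedom, while building $\vec M$, to select the sets $c$ coherently on a stationary set of $i$; this is the point at which the collapsing maps $e_\gamma \colon \eta \to \gamma+1$ are used, transferring the coherence problem from $\lambda$ down to $\eta$ — where cofinality-$\mu$ approachability causes no trouble — and it relies on $\lambda$ being a successor cardinal. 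Carrying this out is the technical heart of Shelah's argument; the submodel bookkeeping above is routine by comparison. (An alternative route, available when $\eta$ is singular, fixes a scale in a product $\prod_k \tau_k$ with $\sup_k \tau_k = \eta$ of true cofinality $\lambda$, notes that every \emph{good} point of cofinality $\mu$ is approachable with respect to the $\vec a$ read off the scale, and invokes Shelah's theorem that such a scale has stationarily many good points of cofinality $\mu$.)
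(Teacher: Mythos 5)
This statement is quoted by the paper from Shelah's \emph{Cardinal Arithmetic} and is not proved there, so the only question is whether your argument stands on its own; it does not. Your reduction is fine as far as it goes: the $\mu=\aleph_0$ case, the filtration $\la M_i \mid i<\lambda\ra$, the bookkeeping enumeration $\vec a$ of the ($\le\lambda$ many) bounded sets appearing in the $M_i$'s, and the verification that an ``internally coherent'' cofinal $c\subseteq i$ (i.e.\ $c\cap j\in M_{j+1}$ for $j\in c$) yields $\delta_i\in S_{\vec a}$ are all correct. But the existence, for stationarily many $i$ of cofinality $\mu$, of such a coherent $c$ \emph{is} the theorem: it is exactly the assertion that stationarily many $\delta\in\cof(\mu)$ are approachable with respect to the sequence read off the chain. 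You acknowledge this and then gesture at ``transferring the coherence problem from $\lambda$ down to $\eta$ via the collapses $e_\gamma$,'' but no such transfer is carried out, and the phrase ``where cofinality-$\mu$ approachability causes no trouble'' is not meaningful without a cardinal arithmetic hypothesis (note $\eta$ may be singular, and in the intended applications it is); in ZFC one cannot make club-many, or even all, cofinality-$\mu$ points coherent, and the genuine content of Shelah's proof is the device that secures stationarily many --- which differs substantially according to whether $\eta$ is regular or singular (in the singular case it goes through PCF-style arguments). So the technical heart is missing, not merely routine.

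The parenthetical fallback is also not a proof: the implication you invoke, that every \emph{good} point of cofinality $\mu$ of a scale is approachable with respect to a sequence $\vec a$ read off the scale, is unsubstantiated --- the standard implication (used elsewhere in this paper, e.g.\ in the discussion around Proposition \ref{prop-IAmain}) runs in the opposite direction, from approachability/IA structures to exact upper bounds and goodness. If you want a complete argument you would need either to reproduce Shelah's construction of the coherent cofinal sets (say, following the treatment in Cummings--Foreman--Magidor or Eisworth's handbook chapter), or, for singular $\eta$, give the PCF argument in full rather than citing a reversed implication. As it stands, the write-up is a correct framing plus an explicit appeal to the very result being proved.
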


	Foreman and Magidor established the connection between approachable ordinals and internally approachable structures. 
	\begin{theorem}[Foreman-Magidor \cite{ForMag-veryweak}]\label{thm-IAmodels}
		Let $\lambda$ be a regular cardinal 
		and $\vec{a}  = \la a_\alpha \mid \alpha < \lambda\ra$ be a sequence of bounded subsets of $\lambda$. 
		Suppose that $\A$ is an algebra which expands $\la H_\theta,\in,<_\theta, \vec{a} \ra$ for some regular $\theta > \lambda$. Then there exists a closed unbounded set $C \subseteq \lambda$ such that for every $\delta \in C \cap S_{\vec{a}}$ there is an IA chain of length $\mu = \cf(\delta)$ whose limit $M \elem \A$ has cardinality $\mu$ and satisfies that $\sup(M \cap \lambda) = \delta$.
	\end{theorem}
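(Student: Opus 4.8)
The plan is to carry out the usual reduction of an internally approachable chain of length $\delta$ to one of length $\mu=\cf(\delta)$, using the approachability witness $D$ to \emph{compress} the chain and the predicate $\vec a$ to \emph{decode} the compressed chain internally. First I would fix, by the standard construction, a $\subseteq$-increasing continuous chain $\la N_i\mid i<\lambda\ra$ of elementary substructures $N_i\elem\A$ with $|N_i|<\lambda$, $\lambda\in N_0$, and $N_i\cap\lambda\in\lambda$ (the last clause is arranged by alternating Skolem hulls with adjoining all ordinals below the current supremum; regularity of $\lambda$ keeps the sizes below $\lambda$). Setting $\gamma_i=N_i\cap\lambda$, the map $i\mapsto\gamma_i$ is increasing, continuous and cofinal in $\lambda$, so $C:=\{\delta<\lambda\mid\gamma_\delta=\delta\}$ is a club, and for $\delta\in C$ we have $\sup_{i<\delta}\gamma_i=\delta$. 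This $C$ will be the club in the statement.

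Now fix $\delta\in C\cap S_{\vec a}$ and put $\mu=\cf(\delta)$. Using $\delta\in S_{\vec a}$, choose a cofinal $D\subseteq\delta$ of order type $\mu$ with increasing enumeration $\la d_\xi\mid\xi<\mu\ra$ such that $D\cap\beta\in\vec a\uhr\delta$ for all $\beta<\delta$. Since $\delta$ is a limit ordinal, $d_\xi+1<\delta$ for each $\xi$, so $D\cap(d_\xi+1)=\{d_\eta\mid\eta\le\xi\}=a_{\nu_\xi}$ for some $\nu_\xi<\delta$, which I take to be $<_\theta$-least. The key point is that from the single ordinal parameter $\nu_\xi$ one can decode inside $\A$ the sequence $\la\nu_\eta\mid\eta\le\xi\ra$ (each $\nu_\eta$ being the $<_\theta$-least $\nu$ with $a_\nu=a_{\nu_\xi}\cap(d_\eta+1)$, where $d_\eta$ is the $\eta$-th element of $a_{\nu_\xi}$), hence, uniformly, the sequence $\la M_\eta\mid\eta\le\xi\ra$, where I now \emph{define} $M_\xi:=\SK^{\A}(\{\vec a,\lambda\}\cup\{\nu_\eta\mid\eta<\xi\})$ for $\xi<\mu$.

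Then I would verify the following. (1) $\la M_\xi\mid\xi<\mu\ra$ is $\subseteq$-increasing and continuous, since its generator sets are increasing and, at limit stages, equal to the union of the earlier ones, and $\SK^{\A}$ commutes with increasing unions. (2) It is an IA chain of length $\mu$: for each $\xi<\mu$, the sequence $\vec M\uhr(\xi+1)=\la M_\eta\mid\eta\le\xi\ra$ is $\A$-definable from the parameter $\nu_\xi$ (together with the built-in $\vec a,\lambda$) by the decoding above, and $\nu_\xi\in M_{\xi+1}$, so $\vec M\uhr(\xi+1)\in M_{\xi+1}$. (3) $M:=\bigcup_{\xi<\mu}M_\xi=\SK^{\A}(\{\vec a,\lambda\}\cup\{\nu_\eta\mid\eta<\mu\})$ is elementary in $\A$, and $|M|=\mu$ (it is generated by $\vec a$, $\lambda$ and $\mu$ ordinals, and it contains $D$ since $d_\eta=\max(a_{\nu_\eta})\in\SK^{\A}(\{\vec a,\nu_\eta\})\subseteq M$). (4) $\sup(M\cap\lambda)=\delta$: on one hand $D\subseteq M$ gives $\sup(M\cap\lambda)\ge\sup D=\delta$; on the other hand, for each $\xi<\mu$ one has $\sup\{\nu_\eta\mid\eta<\xi\}<\delta$ (a supremum of fewer than $\cf(\delta)$ ordinals below $\delta$), so by $\delta\in C$ there is $\beta<\delta$ with $\gamma_\beta>\sup\{\nu_\eta\mid\eta<\xi\}$; then all generators of $M_\xi$ lie in $N_\beta$, whence $M_\xi\subseteq N_\beta$ and $M_\xi\cap\lambda\subseteq\gamma_\beta<\delta$, so $M\cap\lambda\subseteq\delta$.

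I expect the main obstacle to be identifying precisely where the approachability hypothesis enters: it is \emph{not} needed to build \emph{some} chain of length $\mu$ with limit $M$ satisfying $\sup(M\cap\lambda)=\delta$, but exactly to ensure that this short chain is internally approachable, i.e.\ that $\vec M\uhr(\xi+1)$ can be recovered \emph{inside} $M_{\xi+1}$ from information ($\nu_\xi$, a code of an initial segment of $D$) that actually lives there. The accompanying bookkeeping --- that the suprema of the codes $\nu_\eta$ stay below $\delta$, and that each $M_\xi$ is absorbed by a single $N_\beta$ with $\beta<\delta$, so that no ordinal of $[\delta,\lambda)$ slips into $M$ while $|M|$ remains $\mu$ --- is what must be arranged to fit together.
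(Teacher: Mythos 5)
Your overall strategy matches the paper's (a long chain with a club $C$ of fixed points, the approachability witness $D$, a short chain generated by codes of initial segments of $D$, with $D\subseteq M$ giving $\sup(M\cap\lambda)\ge\delta$ and absorption into the long chain giving $M\cap\lambda\subseteq\delta$), but there is a genuine gap at your step (2), which is exactly the internal-approachability verification. You assert that $\vec M\uhr(\xi+1)$ is ``$\A$-definable from the parameter $\nu_\xi$''. Decoding the ordinals $\la \nu_\eta\mid\eta\le\xi\ra$ from $\nu_\xi$ is indeed definable over $\A$ (modulo the small point below), but passing from these codes to the structures $M_\eta=\SK^{\A}(\{\vec a,\lambda\}\cup\{\nu_\zeta\mid\zeta<\eta\})$ requires the map $x\mapsto\SK^{\A}(x)$ to be definable over $\A$, and it is not: $\A$ has infinitely many Skolem functions (one per formula), and if its hull operation were definable over $\A$ then satisfaction for $\A$ would be definable over $\A$ (evaluate $\phi$ at $\bar b$ inside the element-structure $\SK^{\A}(\bar b)$), contradicting Tarski. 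Since the elements of $M_{\xi+1}$ are precisely the objects $\A$-definable from finitely many of its generators, nothing in your construction actually places $\la M_\eta\mid\eta\le\xi\ra$ inside $M_{\xi+1}$; as you yourself anticipate, this is the one place where approachability has to be converted into membership, and the conversion is not supplied.

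This is also where your use of the long chain diverges from the paper's argument. There, the long chain $\la M_i\mid i<\lambda\ra$ is itself internally approachable, and the short-chain members are Skolem hulls computed \emph{inside} members $M_i$ of the long chain: the restricted structures $\A\uhr M_i$ are set-sized elements of $H_\theta$ (and can be threaded along the chain), and for element-structures the hull operation \emph{is} definable over $\A$; hence an initial segment of the short chain is definable from parameters (an initial segment of the long chain together with a code for an initial segment of $D$) that genuinely belong to the next short-chain member. In your proof the long chain is used only to bound $M\cap\lambda$ below $\delta$, so this second, essential role is missing; to repair the argument, build the $M_\xi$'s as hulls inside suitable $N_i$'s (with the sequence of restricted structures carried along the long IA chain) rather than as hulls in $\A$ itself. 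A separate, minor point: take $\nu_\eta$ to be the least \emph{ordinal} index with $a_{\nu_\eta}=D\cap(d_\eta+1)$ (it is then automatically below $\delta$), rather than the $<_\theta$-least; with the $<_\theta$-least choice the index decoded inside the structure need not coincide with the one used in the construction and need not lie below $\delta$, which your step (4) requires.
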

	The idea is to start with a long IA chain  $\la M_i \mid i < \lambda\ra$ of substructures
	of $\A$ and take $C$ to be the club of $\delta < \lambda$ such that $M_\delta \cap \lambda = \delta$. 
	Then, for $\delta \in C \cap S_{\vec{a}}$, $M_\delta$ contains $\vec{a}\uhr \delta$ and we can therefore approximate some cofinal $D \subseteq \delta$ of ordertype $\mu= \cf(\delta)$ in $M_\delta$. With this, one can create an IA-chain of $\mu$-sized structures $\la N_\nu \mid \nu < \mu\ra$ within $M_\delta$, each of which is the Skolem hull in some $M_i$ of initial segments of $D$. The union of these substructures is an IA-structure $N \subseteq M_\delta$ of size $\mu$, which contains $D$ and thus satisfies $\sup(N \cap \lambda) = \delta$.

	\begin{corollary}\label{cor-IAsum}
		Suppose that $\lambda = \eta^+$ is a successor cardinal and $\mu < \eta$ is regular. Then for every regular cardinal $\theta  >\lambda$ and an algebra $\A$ expanding 
		$\la H_\theta,\in,<_\theta\ra$ there is a sequence $\vec{a}$ such that $S_{\vec{a}} \cap \cof(\mu)$ is stationary in $\lambda$ and for every $\delta \in S_{\vec{a}}$ there is an IA substructure $M \elem \A$ of length and cardinality $\mu$ such that $\sup(M \cap \lambda) = \delta$.
	\end{corollary}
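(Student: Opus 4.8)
The plan is to obtain the corollary by combining Shelah's Theorem~\ref{thm-ShelahIA}, which supplies a sequence whose set of approachable ordinals meets $\cof(\mu)$ stationarily, with the Foreman--Magidor Theorem~\ref{thm-IAmodels}, which converts $\vec{a}$-approachability into genuine internally approachable substructures --- but only on a club ``error set'' $C$. The remaining work will be to absorb $C$ back into the witnessing sequence, so that the conclusion holds at \emph{every} point of $S_{\vec{a}}$ and not merely at those in $C$, while also arranging that every point of $S_{\vec{a}}$ has cofinality exactly $\mu$ (this is what forces the IA chains to have length and cardinality $\mu$).

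First I would apply Theorem~\ref{thm-ShelahIA} to the given successor cardinal $\lambda=\eta^+$ and the regular $\mu<\eta$ to obtain a sequence $\vec{b}=\la b_\nu\mid\nu<\lambda\ra\subseteq[\lambda]^{<\lambda}$ with $S_{\vec{b}}\cap\cof(\mu)$ stationary in $\lambda$. Next, given a regular $\theta>\lambda$ and an algebra $\A$ expanding $\la H_\theta,\in,<_\theta\ra$, I would pass to the expansion $\A^+$ of $\A$ obtained by adjoining a predicate naming $\vec{b}$ (and, without loss of generality, Skolem functions), and apply Theorem~\ref{thm-IAmodels} to $\vec{b}$ and $\A^+$. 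This produces a club $C\subseteq\lambda$ such that every $\delta\in C\cap S_{\vec{b}}$ is of the form $\sup(N\cap\lambda)$ for some internally approachable $N\elem\A^+$, hence also $N\elem\A$, whose IA chain has length $\cf(\delta)$ and which satisfies $|N|=\cf(\delta)$.

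The heart of the argument is then to manufacture a single sequence $\vec{a}$, depending on $\A$, with
\[
S_{\vec{a}}\ \subseteq\ C\cap S_{\vec{b}}\cap\cof(\mu)
\qquad\text{and}\qquad
S_{\vec{a}}\cap\cof(\mu)\ \text{stationary in}\ \lambda .
\]
Granting this, the corollary is immediate: for $\delta\in S_{\vec{a}}$ we have $\delta\in C\cap S_{\vec{b}}$ and $\cf(\delta)=\mu$, so Theorem~\ref{thm-IAmodels} hands us an IA substructure $M:=N\elem\A$ of length and cardinality $\mu$ with $\sup(M\cap\lambda)=\delta$. To build such an $\vec{a}$ I would use the standard ``absorb a club into an approachability witness'' manipulation: one modifies each $b_\nu$ using the club $C$ below $\sup(b_\nu)$ --- for instance by adjoining, or substituting for, the $C$-predecessors of its elements, read off along length-$\mu$ cofinal subsets of $C$ --- in such a way that any cofinal $D\subseteq\delta$ all of whose proper initial segments appear in $\vec{a}\uhr\delta$ is forced (i) to have cofinally many elements in $C$, so that $\delta\in\overline{C}=C$; (ii) to carry a genuine $\vec{b}$-approachability witness for $\delta$, so that $\delta\in S_{\vec{b}}$; and (iii) to have order type $\mu$, so that $\cf(\delta)=\mu$. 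One then checks that $S_{\vec{a}}$ still contains $S_{\vec{b}}\cap\cof(\mu)$ intersected with the club of limit points of $C$ (together with whatever further club the bookkeeping requires), so that $S_{\vec{a}}\cap\cof(\mu)$ remains stationary.

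The main obstacle is exactly this last step: it would be routine to get $S_{\vec{a}}\supseteq C\cap S_{\vec{b}}\cap\cof(\mu)$ modulo a club, but the corollary demands control of \emph{all} of $S_{\vec{a}}$, i.e.\ one must rule out spurious approachable points lying outside $C\cap S_{\vec{b}}$ or having the wrong cofinality. This is the standard content of the facts that the approachability ideal $I[\lambda]$ is a normal ideal and that its restriction to $\cof(\mu)$ admits witnesses $\vec{a}$ for which $S_{\vec{a}}$ is itself concentrated on $\cof(\mu)$ and on any prescribed club. Modulo invoking that, the remaining points --- the elementarity bookkeeping for $\A^+$, the verification of the length and size of $N$ in Theorem~\ref{thm-IAmodels}, and the preservation of stationarity under the continuous enumeration of $C$ --- are routine.
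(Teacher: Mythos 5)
Your first two paragraphs reproduce what the paper actually does: the paper offers no separate proof of Corollary \ref{cor-IAsum} at all; it is intended as the immediate combination of Theorem \ref{thm-ShelahIA} (supplying $\vec{a}$) with Theorem \ref{thm-IAmodels} applied to an expansion of $\A$ by $\vec{a}$. In particular the club $C$ from Theorem \ref{thm-IAmodels} is never absorbed into the sequence: it reappears explicitly in Proposition \ref{prop-IAmain} (``for every $\delta \in S_{\vec{a}} \cap C$''), and every later application further restricts to $\cof(\mu)$. So the clause ``for every $\delta \in S_{\vec{a}}$'' in the corollary is loose wording to be read with this club and cofinality restriction understood; nothing in the paper relies on the literal stronger form you set out to prove.

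The portion of your proposal that goes beyond this --- manufacturing a single $\vec{a}$ with $S_{\vec{a}} \subseteq C \cap S_{\vec{b}} \cap \cof(\mu)$ --- is exactly where the gap lies, and you acknowledge but do not close it. Absorbing the club is fine: replacing each $b_\nu$ by its projection $\{\sup(C\cap(\gamma+1)) : \gamma \in b_\nu\}$ into $C$ makes every set in $\vec{a}$ a subset of $C$, so any witness $D$ for approachability of $\delta$ is contained in $C$ and hence $\delta \in C$, while limit points of $C$ in $S_{\vec{b}} \cap \cof(\mu)$ stay approachable with order-type-$\mu$ witnesses. But the second half of your cited ``standard fact'' --- that the restriction of $I[\lambda]$ to $\cof(\mu)$ admits witnesses with $S_{\vec{a}}$ itself concentrated on $\cof(\mu)$ --- is not a consequence of the normality of $I[\lambda]$ and is not standard in that form. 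Bounding the order types of the sets appearing in $\vec{a}$ by $\mu$ does exclude approachable points of cofinality above $\mu$, but any witness $D$ must have all of its \emph{finite} initial segments enumerated in $\vec{a}\uhr\delta$, so spurious approachable points of smaller cofinality cannot be ruled out by any restriction you describe, and you give no mechanism for excluding them. Since the IA structures of length and cardinality $\mu$ exist only at $\delta \in C$ of cofinality $\mu$, your argument as written proves the version of the corollary that the paper actually uses (the one recorded in Proposition \ref{prop-IAmain}), but not the literal statement; closing that last step would require a genuinely new argument, not an appeal to normality of the approachability ideal.
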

	

	\subsection{Scales and IA structures}
	
	Let $<^*$ be the order relation on functions $f$ from $\omega$ to the ordinals, defined by $f<^* g$ if and only if $f(n) < g(n)$ for all but finitely many $n < \omega$. 
	\begin{definition}
		Let $\vec{\kappa} = \la \kappa_n \mid n < \omega\ra$ be an increasing sequence of regular cardinals.
		A sequence of functions $\vec{f} = \la f_\alpha \mid \alpha < \lambda\ra$ is called a scale on $\prod_n \kappa_n$ if  it satisfies the following conditions:
		\begin{enumerate}
			\item $\vec{f}$ is increasing in the order $<^*$;
			\item $\vec{f}$ is cofinal in the structure $(\prod_n \kappa_n,<^*)$ in the sense that for every $g \in \prod_n \kappa_n$ there exists some $\alpha < \lambda$ such that $g<^* f_\alpha$; and
			\item for every $\alpha < \lambda$, $f_\alpha(n) < \kappa_n$ for all but finitely many $n < \omega$. 
		\end{enumerate}
	\end{definition}
	Our definition of a scale is a slight relaxation of the usual definition of a scale, which further requires that $\vec{f}$ to be contained in $\prod_n \kappa_n$ (namely, that $f_\alpha(n) < \kappa_n$ for all $n < \omega$). The two versions are equivalent for all of our purposes and it is not difficult to transform a sequence $\vec{f}$ which is a scale according to our definition to a scale accordring to the standard definition.
	
	We proceed to define exact upper bounds and continuity points of scales.
	\begin{definition}
		Let $\vec{f} = \la f_\alpha \mid \alpha< \lambda\ra$ be a scale on some product $\prod_n \kappa_n$.
		\begin{enumerate}
			\item Let $\delta < \lambda$ and $\vec{f}\uhr \delta = \la f_\alpha \mid \alpha < \delta\ra$. We say that a function $g \in \prod_n \kappa_n$ is an exact upper bound (eub)  of $\vec{f}\uhr \delta$ if $\vec{f}\uhr \delta$ is a scale on $\prod_n g(n)$.
			\item We say that an ordinal $\delta < \lambda$ is a continuity point of $\vec{f}$ if either $\vec{f}\uhr\delta$ does not have an eub, or $f_\delta$ is such a bound.
		\end{enumerate}
	\end{definition}
	
	It is not difficult to verify that if $g_1$ and $g_2$ are two eubs of $\vec{f}\uhr\delta$ then $g_1(n) = g_2(n)$ for almost all $n < \omega$.
	
	Let $\A$ be an algebra expanding $\la H_\theta,\in,<_\theta\ra$ for some regular cardinal $\theta$, and $M \elem \A$ be a tight substructure which contains a scale $\vec{f} = \la f_\alpha \mid \alpha < \lambda\ra$ on $\prod_n \kappa_n$. Denote $\sup(M \cap \lambda)$ by $\delta$.
	We make a few observations concerning $\vec{f}\uhr \delta$ and $M$.
	\begin{enumerate}
		\item Every function in $M \cap \vec{f}$ is $<^*$-dominated by the characteristic function $\chi_M^{\vec{\kappa}}$ of $M$, defined by $\chi_{M}^{\vec{\kappa}}(n) = \sup(M \cap \kappa_n)$.
		\item Suppose $h$ is a function in $\prod_n \chi_M^{\vec{\kappa}}(n)$. Then $h$ is pointwise dominated by a function $g \in \prod_n (M \cap \kappa_n)$. Now, since $M$ is tight, $g$ is $<^*$-dominated by some $f \in M \cap \prod_n \kappa_n$, which, in turn, is $<^*$-dominated by some $f_\alpha \in M \cap \vec{f}$. 
		\item It follows from the last two observations that $\chi_M^{\vec{\kappa}}$ is an eub of $\vec{f} \cap M$. Since $\vec{f} \cap M$ is cofinally interleaved in the ordering $<^*$ with $\vec{f}\uhr \delta$, we conclude that $\chi_{M}^{\vec{\kappa}}$ is an eub of $\vec{f}\uhr \delta$.
	\end{enumerate}
	
	By combining the last observation with Corollary \ref{cor-IAsum}, and the fact that every IA structure is tight, we obtain the following conclusion.
	\begin{proposition}\label{prop-IAmain}
		Let $\vec{\kappa} = \la \kappa_n \mid n < \omega\ra$ be an increasing sequence of regular cardinals whose limit is $\kappa_{\omega} = \cup_n \kappa_n$. Suppose that $\vec{f} = \la f_\alpha \mid \alpha < \lambda\ra$ is a scale on $\prod_n \kappa_n$ of a successor length $\lambda \geq \kappa_{\omega}^+$ and that $\A$ is an algebra expanding $\la H_\theta, \in,<_\theta, \vec{f}\ra$ for some regular cardinal $\theta > \lambda$. 
		Then for every regular cardinal $\mu < \kappa_\omega$ there is a sequence $\vec{a} \subseteq [\lambda]^{<\lambda}$ and a closed unbounded set $C \subseteq \lambda$ such that
		$S_{\vec{a}} \cap \cof(\mu)$ is stationary in $\lambda$ and for every $\delta \in S_{\vec{a}} \cap C$ there is a tight substructure $M \elem \A$ which satisfies that $\sup(M \cap \lambda) = \delta$ and $\chi_M^{\vec{\kappa}}$ is an eub of $\vec{f}\uhr \delta$. If moreover, $\delta$ is a continuity point of $\vec{f}$ then 
		$\sup(M \cap \kappa_n) = f_\delta(n)$ for almost every $n < \omega$. 
	\end{proposition}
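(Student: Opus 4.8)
The plan is to read the Proposition off from Corollary~\ref{cor-IAsum}, from the three observations displayed immediately before the statement (which hold for any tight $M\elem\A$ containing $\vec f$), and from the fact that internally approachable structures of cardinality below $\kappa_\omega$ are tight for $\vec\kappa$. To begin, fix a regular $\mu<\kappa_\omega$. The hypotheses of Corollary~\ref{cor-IAsum} are met: $\lambda$ is a successor cardinal, say $\lambda=\eta^+$, so $\eta\geq\kappa_\omega>\mu$, and $\A$ expands $\la H_\theta,\in,<_\theta\ra$ since it expands $\la H_\theta,\in,<_\theta,\vec f\ra$. Hence the corollary supplies a sequence $\vec a\subseteq[\lambda]^{<\lambda}$ with $S_{\vec a}\cap\cof(\mu)$ stationary in $\lambda$ such that for every $\delta\in S_{\vec a}$ there is an internally approachable $M_\delta\elem\A$ of length and cardinality $\mu$ with $\sup(M_\delta\cap\lambda)=\delta$; we may take $C=\lambda$ (or, if one prefers to exhibit it, the club arising from the long IA-chain used in the proof of Theorem~\ref{thm-IAmodels}).

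Next I would feed each $M_\delta$ into observations~(1)--(3), which requires two checks. First, $\vec f\in M_\delta$: since $\vec f\in H_\theta$ is named by a constant in the signature of $\A$, it belongs to every elementary substructure. Second, $M_\delta$ is tight for $\vec\kappa$: when $\mu<\kappa_0$ this is item~3 of the theorem of \cite{CFM-CanI} quoted above, and the general case reduces to it by passing to the tail $\la\kappa_n\mid\kappa_n>\mu\ra$ --- observations~(1)--(3) involve only the cofinitely many coordinates $n$ with $\kappa_n>\mu$, and on those the range of any $g\in\prod_n(M_\delta\cap\kappa_n)$ is contained in some member of an internally approachable chain with union $M_\delta$, so $g$ is $<^*$-dominated by a function of $M_\delta$ (essentially the characteristic function of that member). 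Granting these two facts, observations~(1)--(3) give that $\chi^{\vec\kappa}_{M_\delta}$ is an eub of $\vec f\cap M_\delta$, and since $M_\delta\cap\lambda$ is cofinal in $\delta$ and $\vec f$ is $<^*$-increasing, $\vec f\cap M_\delta$ is $<^*$-cofinal in $\vec f\uhr\delta$; hence $\chi^{\vec\kappa}_{M_\delta}$ is an eub of $\vec f\uhr\delta$ as well. This establishes the main assertion for all $\delta\in S_{\vec a}\cap C$.

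Finally, if $\delta$ is in addition a continuity point of $\vec f$, then by definition either $\vec f\uhr\delta$ has no eub or $f_\delta$ is one; the previous paragraph rules out the first alternative, so $f_\delta$ is an eub of $\vec f\uhr\delta$, and since any two exact upper bounds of $\vec f\uhr\delta$ agree on all but finitely many coordinates we conclude $\sup(M_\delta\cap\kappa_n)=\chi^{\vec\kappa}_{M_\delta}(n)=f_\delta(n)$ for all but finitely many $n<\omega$, as required. The whole argument is essentially bookkeeping over the cited results; the step demanding the most care is the tail reduction used to see that $M_\delta$ is tight when $\mu\geq\kappa_0$ --- that is, convincing oneself that neither tightness for $\vec\kappa$ nor the eub computation is affected by the finitely many coordinates $n$ with $\kappa_n\leq\mu$ --- alongside the routine verification of the hypotheses of Corollary~\ref{cor-IAsum}.
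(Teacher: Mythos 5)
Your proposal is correct and follows essentially the same route as the paper, which presents Proposition~\ref{prop-IAmain} precisely as the combination of Corollary~\ref{cor-IAsum}, the tightness of internally approachable structures, and the three observations showing $\chi_M^{\vec{\kappa}}$ is an eub of $\vec{f}\uhr\delta$, with the continuity-point clause following from uniqueness of eubs mod finite. Your extra care about the club $C$ and about tightness when $\mu\geq\kappa_0$ (via the tail/eventual-domination reduction already noted in the paper) only makes explicit points the paper leaves implicit.
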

	

	\section{Short extenders forcing and tight stationarity}\label{section-thm1}
	The purpose of this section is to prove Theorem \ref{thm1}. The proof is obtained by forcing with a version of Gitik's short extenders-based forcing with certain long extenders. 
	Our presentation follows \cite{GitUng-SEF} for the most part and omits most of the technical proofs. The only exception to this
	is that we will replace the notion of $k$-good ordinals in \cite{GitUng-SEF} with the more recent one from \cite{Gitik-EBF1}.
	We commence by describing the large cardinal framework which is used to construct the forcing.
	
	\subsection{Ground model assumptions and related forcing preliminaries}
	Let $\kappa$ be a measurable cardinal and $j: V_{\kappa+1} \to N$ be an elementary embedding of transitive sets with critical point $\kappa$ such that ${}^\kappa N \subseteq N$. 
	We say 
	\begin{enumerate}
		\item  $j$ is $\lambda$-strong for some $\lambda \leq j(\kappa)$ if $V_{\lambda} \subseteq N$.
		\item  $j$ is $(+1)$-extendible if $N = V_{j(\kappa)+1}$. 
	\end{enumerate}
	
	Correspondingly, we say
	\begin{itemize}
		\item $\kappa$ is $\lambda$-strong if there exists a $\lambda$-strong embedding $j$ as above, with $\lambda < j(\kappa)$.
		\item $\kappa$ is  superstrong if there exists an embedding $j$ as above which is $j(\kappa)$-strong. 
		\item $\kappa$ is $(+1)$-extendible if there exists a $(+1)$-extendible embeding $j$ as above.
	\end{itemize}
	
	These notions are consistency-wise increasing in the large cardinal hierarchy: A $(+1)$-extendible cardinal is superstrong, and the consistency of a superstrong cardinal implies the consistency of a cardinal $\kappa$ which is $\lambda$-strong for every $\lambda$. 
	Furthermore, if $\kappa$ is $\kappa^+$ supercompact cardinal, then it is a limit of $(+1)$-extendible cardinals (see \cite{kanamori}).

	For the rest of the section, we assume $V$ is a model of $\GCH$ which contains an increasing sequence of cardinals $\vec{\kappa} = \la \kappa_n \mid n < \omega\ra$ such that each $\kappa_n$ is the critical point of an elementary embedding $j_n : V_{\kappa_n+1} \to N_n$ which is $\lambda_n$-strong for some regular $\lambda_n$ with $\kappa_n^{+n+2} \leq \lambda_n \leq j_n(\kappa_n)$. 
	We also fix a regular cardinal $\chi >> \kappa_{\omega} = \cup_n \kappa_n$ and a structure $(H_{\chi}, \in,<_\chi)$. 
	For each $n < \omega$, we derive an $(\kappa_n,\lambda_n)$-extender $E_n$ from $j_n$ as follows. For every ordinal $\alpha \in [\kappa_n,\lambda_n)$ let $E_n(\alpha)$ be the $\kappa_n$-complete measure on $\kappa_n$ defined by $X \in E_n(\alpha) \iff  \alpha \in j_n(X)$. We define a Rudin-Kiesler order on the indicies of $E_n$ by writing $\alpha \leq_{E_n} \beta$ if and only if $\alpha \leq \beta$ and there exists a function $f : \kappa_n \to \kappa_n$ so that $j_n(f)(\beta) = \alpha$. 
	For each $\alpha \leq_{E_n} \beta$ we denote the first function $f$ with the above property in the well ordering $<_\chi$  by $\pi_{\beta,\alpha}$, with the possible exception when $\alpha = \beta$, in which case we take $\pi_{\beta,\beta}$ to be the identity function. 
	It turns out that the ordering $\leq_{E_n}$ is $\kappa_n$-directed (\cite{Gitik-HB}).
	
	We proceed to define the notion of $k$-good indices. Our notion here deviates from the description of \cite{GitUng-SEF}, and follows \cite{Gitik-EBF1}.
	\begin{definition}\label{def-goodpoints}
		For any two integer values $1 < k \leq n$, let $\A_{n,k}$ be the structure  $( H_{\chi^{+k}}, \in, <_{\chi^{+k}}, \chi,E_n, \la \alpha \mid \alpha \leq \kappa_n^{+k}\ra)$\footnote{namely, $\A_{n,k}$ is the expansion of $( H_{\chi^{+k}}, \in, <_{\chi^{+k}}, \chi,E_n)$, in a language which contains  $\kappa_n^{+k}$ additional constant symbols, $c_\alpha$, $\alpha < \kappa_n^{+k}$, so that each $c_\alpha$ is interpreted in the model $\A_{n,k}$ as $\alpha$.}.
		We assume that the well-ordering $<_{\chi^{+k}}$ of $H_{\chi^{+k}}$ extends the given order $<_\chi$ of $H_{\chi}$.
		An ordinal $\delta < \lambda_n$ is called $k$-good if there exists an elementary substructure $M_{n,k}(\delta) \elem \A_{n,k}$ so that $M_{n,k}(\delta) \cap \lambda_n = \delta$. $\delta$ is said to be good if it is $k$-good for every $k \leq n$. 
	\end{definition}
	
	It is easy to see that the set of good ordinals $\alpha$ is closed unbounded in $\lambda_n$ for each $n < \omega$. 
	We end this part with a simple but important observation. 
	\begin{lemma}\label{lem-Observationsgoodness}
		$\A_{n,l} \in \A_{n,k}$ for every $n < \omega$ and $l < k < \omega$. We therefore have
		\begin{enumerate}
			\item An ordinal $\gamma < \lambda_n$ is $l$-good iff $\A_{n,k} \models \gamma \text{ is } l\text{-good }$. 
			\item Suppose that $\gamma$ is $k$-good and $x \in \A_{n,k}$ is a set of ordinals with $\min(x) \geq \gamma$. For every formula $\phi(v)$ in parameters from
			$M_{n,k}(\gamma)$, if $\A_{n,k} \models \phi(x)$ then for every $\gamma' < \gamma$ there exists a set of ordinals $x' \in M_{n,k}(\gamma)$ (in particular $x' \subseteq \gamma)$ such that $\min(x') > \gamma'$ and $M_{n,k}(\gamma) \models \phi(x')$. 
		\end{enumerate}
	\end{lemma}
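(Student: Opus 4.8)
The plan is to prove the three assertions in turn (where, as in Definition~\ref{def-goodpoints}, $1<l<k\le n$); all are soft and follow from cardinal arithmetic together with elementarity and absoluteness, so I expect no genuine difficulty and would keep the argument short. For $\A_{n,l}\in\A_{n,k}$: under $\GCH$, $|H_{\chi^{+l}}|=\chi^{+l}<\chi^{+k}$, so $H_{\chi^{+l}}\in H_{\chi^{+k}}$; the well-order $<_{\chi^{+l}}$ and the restriction of $\in$ to $H_{\chi^{+l}}$ are subsets of $H_{\chi^{+l}}\times H_{\chi^{+l}}$ of size $\chi^{+l}$, hence also lie in $H_{\chi^{+k}}$; and $\chi$, $E_n$ and the sequence $\la\alpha\mid\alpha\le\kappa_n^{+l}\ra$ all belong to $H_\chi\subseteq H_{\chi^{+k}}$. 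Coding $\A_{n,l}$ as the finite tuple of these objects gives $\A_{n,l}\in H_{\chi^{+k}}$, i.e. $\A_{n,l}\in\A_{n,k}$. I would also record here that $H_{\chi^{+k}}$ models enough of $\ZFC$ to form the satisfaction relation of the set-sized structure $\A_{n,l}$, so that ``$M\elem\A_{n,l}$'' and ``$\gamma$ is $l$-good'' are expressible over $\A_{n,k}$ --- with $\A_{n,l}$ as a parameter, and with no parameters beyond $E_n,\chi$ if, as we may assume, the well-orders $<_{\chi^{+m}}$ are chosen coherently (each extending the previous).

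For item (1): the statement ``$\gamma$ is $l$-good'' asserts the existence of an $M$ with $M\elem\A_{n,l}$ and $M\cap\lambda_n=\gamma$. Any such $M$ has $|M|\le|\A_{n,l}|=\chi^{+l}$, hence lies in $H_{\chi^{+k}}$; and whether a candidate $M\in H_{\chi^{+k}}$ works is decided, via the Tarski--Vaught test, by a formula quantifying only over elements of $M$ and over codes of first-order formulas, with the parameters $\A_{n,l}$, its satisfaction relation, $\lambda_n$ and $\gamma$, all lying in $H_{\chi^{+k}}$. Hence ``$\gamma$ is $l$-good'' is absolute between $V$ and $H_{\chi^{+k}}$; since the domain of $\A_{n,k}$ is exactly $H_{\chi^{+k}}$ with the true membership relation, it is absolute to $\A_{n,k}$ as well, which is item (1).

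For item (2): fix $k$-good $\gamma$ and set $M:=M_{n,k}(\gamma)\elem\A_{n,k}$, so $M\cap\lambda_n=\gamma$. I would use that $x$ is a bounded subset of $\lambda_n$ --- so $\gamma\le\min(x)\le\sup(x)<\lambda_n$ --- which is the case relevant to the applications and is exactly what is needed to get $x'\subseteq\gamma$. Two observations: $\lambda_n\in M$, as it is definable from $E_n\in M$; and every $\gamma'<\gamma$ lies in $M$, because $M\cap\lambda_n=\gamma$ and $\gamma'<\lambda_n$. Let $\bar p\in M$ list the parameters of $\phi$. Now consider the sentence
\[
\exists v\,\bigl(v\subseteq\lambda_n\ \wedge\ v\text{ is bounded in }\lambda_n\ \wedge\ \min(v)>\gamma'\ \wedge\ \phi(v,\bar p)\bigr),
\]
a statement of $\A_{n,k}$ with parameters $\lambda_n,\gamma',\bar p$ from $M$, true in $\A_{n,k}$ as witnessed by $x$. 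By $M\elem\A_{n,k}$ there is a witness $x'\in M$; then $M$, hence by elementarity $\A_{n,k}$, believes $x'$ is a bounded subset of $\lambda_n$ with $\min(x')>\gamma'$ and $\phi(x',\bar p)$. Finally $\sup(x')$ is definable from $x'$, so $\sup(x')\in M$, and $\sup(x')<\lambda_n$, whence $\sup(x')\in M\cap\lambda_n=\gamma$ and therefore $x'\subseteq\gamma$. This $x'$ is as required.

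I do not expect a real obstacle. The one point needing care is the last line of item (2): to guarantee $x'\subseteq\gamma$ one must reflect the existence of a \emph{bounded subset of $\lambda_n$} with the stated properties --- rather than reflecting $x$ directly --- and then exploit $M\cap\lambda_n=\gamma$. In item (1) the only subtlety is the routine absoluteness bookkeeping for the set structure $\A_{n,l}$ together with its satisfaction relation.
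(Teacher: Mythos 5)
Your proof is correct; the paper states this lemma as a bare observation with no proof, and your verification is exactly the routine argument being taken for granted: the cardinality/transitive-closure bounds giving $\A_{n,l}\in H_{\chi^{+k}}$, absoluteness of satisfaction for set structures (plus the fact that any witness $M\elem\A_{n,l}$ lies in $H_{\chi^{+k}}$) for item (1), and for item (2) reflecting the existential statement into $M_{n,k}(\gamma)$ and then using $\sup(x')\in M_{n,k}(\gamma)\cap\lambda_n=\gamma$ to get $x'\subseteq\gamma$. The caveat you flag is also the right reading of the statement: the conclusion $x'\subseteq\gamma$ requires $x$ to be a bounded subset of $\lambda_n$, which is what occurs in the paper's only use of the lemma (there $x\in[\lambda_n]^{<\kappa_n}$ and $\lambda_n$ is regular), so nothing is lost.
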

	
	Assuming $\GCH$, there are only $\kappa_n^{++}$ ultrafilters on $\kappa_n$ and if $k \geq 2$ they are all definable in $\A_{n,k}$. We can therefore apply Lemma \ref{lem-Observationsgoodness} to statements which involve ultrafilters and their Rudin-Kiesler projections. For example, 
	if $\gamma$ is a good ordinal and $\delta \in [\gamma,\lambda_n)$ satisfies that $E_n(\delta) = U$ for some ultrafilter $U$ (which must belong to $M_{n,k}(\gamma)$) then for every $\gamma' < \gamma$
	there exists some $\delta' \in (\gamma',\gamma)$ such that $E_n(\delta') = U$ as well. This ability to move around indices of $E_n$ measures without changing their essential ultrafilter information, plays a major role in the proof that the extenders-based Prikry-type poset $\po$ satisfies ${\kappa_\omega^{++}}$.c.c.

	\subsection{The forcing $(\po,\leq,\leq^*)$}
	Let $\kappa_\omega = \cup_n \kappa_n$.
	Before we proceed to define the main poset $\po$, we introduce some relevant terminology involving partial functions from ${\kappa_\omega^{++}}$ to $\lambda_n$ and subsets of $\kappa_n$.
	
	\begin{definition}[Relevant components]\label{def-relevant}
		${}$
		\begin{enumerate}
			\item A set $r_n \in [\lambda_n]^{<\kappa_n}$ is called $k$-relevant for some $k \leq n$ if it consists of $k$-good ordinals and has a maximal ordinal in the $\leq_{E_n}$ ordering. 
			\item A pair $(r_n,A_n)$ of a sets $r_n \in [\lambda_n]^{<\kappa_n}$ and $A_n \subseteq \kappa_n$ is $k$-relevant if $r_n$ is $k$-relevant  with a maximal ordinal $\gamma_n = \max(r_n)$, $A_n \in E_n(\gamma_n)$, and the following conditions hold.
			\begin{itemize}
				\item For every two ordinals $\alpha < \beta$ in $r_n$ and $\nu \in A_n$, $\pi_{\gamma_n,\alpha}(\nu) < \pi_{\gamma_n,\beta}(\nu)$. 
				\item Suppose that $\alpha \leq_{E_n} \beta \leq_{E_n} \gamma$ are three ordinals in $r_n$ and $\nu \in \pi_{\gamma_n,\gamma}``A$. Then
				\[ \pi_{\gamma,\alpha}(\nu)   =   \pi_{\beta,\alpha}\circ \pi_{\gamma,\beta}(\nu) . \]
			\end{itemize}
			\item A pair $(a_n,A_n)$ of a partial function $a_n : \kappa_{\omega}^{++} \to \lambda_n$ and a subset $A_n \subseteq \kappa_n$, is called $k$-relevant if $a_n$ is order preserving and $(\rng(a_n),A_n)$ is $k$-relevant in the above sense. 
		\end{enumerate}
	\end{definition}

	We turn to define the forcing $\po$ which adds $\kappa_{\omega}^{++}$-many new $\omega$-sequences below $\kappa_{\omega}$. 
	Conditions in $\po$ are sequences $p = \la p_n \mid n < \omega\ra$ which satisfy the following conditions:
	\begin{enumerate}
		\item There exists some $\ell < \omega$ such that for every $n < \ell$, $p_n = f_n$ is a partial function from $\kappa_{\omega}^{++}$ to $\kappa_n$, of size $|f_n| \leq \kappa_\omega$. 
		\item For every $n \geq \ell$, $p_n = \la a_n,A_n,f_n\ra$ where 
		\begin{itemize}
			\item $f_n$ is a partial function from ${\kappa_\omega^{++}}$ to $\kappa_n$ of size $|f_n| \leq \kappa_\omega$,
			\item $(a_n,A_n)$ is a $k_n$-relevant pair for some $k_n \geq 2$, where $a_n$ is a partial function from ${\kappa_\omega^{++}}$ to $\lambda_n$, and $\dom(a_n) \cap \dom(f_n) = \emptyset$.   
		\end{itemize}
		\item $\dom(a_n) \subseteq \dom(a_m)$ for every $n \leq m$.
		\item $\kappa_n \in \rng(a_n)$ for all $n \geq \ell$.
		\item The sequence $\la k_n \mid n < \omega\ra$ is nondecreasing and unbounded in $\omega$.
	\end{enumerate}
	
	We will frequently use the following conventions when referring to conditions $p \in \po$: The integer $\ell$ in the definition of $p$ will be denoted by $\ell^p$. 
	The functions $f_n$ in the definition will be denoted by $f_n^p$, and similarly, for every $n \geq \ell^p$, we will denote $a_n$ and $A_n$ by $a_n^p$ and $A_n^p$ respectively.

	The order relation $\leq$ of the poset $\po$ is the closure of the following two basic operations. 
	\begin{enumerate}
		\item Given a condition $p \in \po$, a \textbf{direct extension} of $p$ is a condition $q$ which satisfies the following conditions:
		\begin{itemize}
			\item $\ell^q = \ell^p$;
			\item $f_n^p \subseteq f_n^q$ for all $n < \omega$;
			\item $a_n^p \subseteq a_n^q$ for all $n \geq \ell^q$; and
			\item for every $n \geq \ell^q$, if $\gamma^q_n = \max(\rng(a_n^q))$ and $\gamma^p_n = \max(\rng(a_n^p))$, then $A_n^q \subseteq \pi_{\gamma^q_n,\gamma^p_n}^{-1}(A_n^p)$. 
		\end{itemize}
		The fact that $q$ is a direct extension of $p$ is denoted by $p \leq^* q$. 
		
		\item Given a condition $p \in \po$, a \textbf{one-point extension} of $p$ is a condition ${p'}$ with the following properties:
		\begin{itemize}
			\item $\ell^{{p'}} = \ell^{p} + 1$;
			\item $p_n = {p'}_n$ for all $n \neq \ell_p$; and
			\item denoting $\max(\dom(a_{\ell^p}^p))$ by $\eta$, there exists some $\nu\in A_{\ell^p}^p$ such that
			\[{p'}_{\ell^p} = f^p_{\ell^p} \cup \{ \la \tau, \pi_{a^p_{\ell^p}(\eta),a^p_{\ell^p}(\tau)}(\nu)\ra \mid \tau \in \dom(a^p_{\ell^p})\}\]
		\end{itemize}
		The fact that ${p'}$ is obtained as a one-point extension of $p$ by $\nu \in A^p_{\ell^p}$ is denoted by writing ${p'} = p \fr \la \nu\ra$. 
	\end{enumerate}
	As mentioned above, the order $\leq$ of $\po$ is the one which is generated by the two given operations.
	Therefore, for two conditions $p,q \in \po$, $q$ extends $p$ (denoted $p \leq q$) if it obtained from $p$ by finitely many applications of one-point extensions and direct extensions. 
	It is routine to verify that if $q$ extends $p$ then $q$ is a direct extension of a condition of the form 
	\[p \fr \la \nu_{\ell^p}, \nu_{\ell^p+1}, \dots, \nu_t\ra  = 
	(\dots((p \fr \la \nu_{\ell^p})\ra \fr \la\nu_{\ell^p+1})\ra \dots ) \fr \la\nu_t\ra \] 
	which is the condition obtained from $p$ by taking $(t+1-\ell_p)$ many one-point extensions with ordinals $\nu_n  \in A^p_n$ for every $n$, $\ell^p \leq n \leq t$.
	
	Let $p = \la p_n \mid n < \omega\ra$ be a condition in $\po$. For every $m < \omega$ we decompose $p$ into the two parts, 
	$p\uhr m = \la p_n \mid n < m\ra$ and $p\dhr m = \la p_n \mid n \geq m\ra$. 
	With this, we define $\po_{<  m} = \{ p\uhr m \mid p \in \po\}$ and $\po_{\geq m} = \{ p\dhr m \mid p \in \po\}$. 
	It is not difficult to see that that the orders $\leq$ and $\leq^*$ on $\po$ naturally order relations on $\po_{<  m}$ and $\po_{\geq m}$ for every $m < \omega$. Moreover, for every $p \in \po$ and $m \leq \ell^p$, the poset $(\po/p,\leq)$ naturally breaks into the product $(\po_{\leq m}/p\uhr m, \leq) \times (\po_{>m}/ p\dhr m, \leq)$. The same holds if we replace $\leq$ by $\leq^*$. Finally, we note that if $m \leq \ell^p$ then 
	the restrictions of $\leq$ and $\leq^*$ to $\po_{<  m}/ p\uhr m$ coincide. 
	
	We list several basic properties of $\po$ which are immediate consequences of the definitions.
	\begin{lemma}\label{Lem-PObasicproperties}${}$
		\begin{enumerate}
			\item $\po$ satisfies the Prikry condition. That is, for every statement $\sigma$ of the forcing language $(\po,\leq)$ and every condition $p \in \po$ there exists a direct extension $p^* \geq^* p$ such that $p^*$ decides $\sigma$. The same is true for $\po_{<  m}$ and $\po_{\geq m}$ for every $m < \omega$.
			\item For every $m < \omega$, the direct extension order $\leq^*$ of $\po_{\geq m}$ is $\kappa_m$-closed.
			\item For every condition $p \in \po$ and $m \leq \ell^p$, the order $\leq$ of $\po_{<  m}$ is $\kappa_{\omega}^+$-closed. 
			\item For every condition $p \in \po$, the direct extension order of $\po/p$ is $\kappa_{\ell^p}$-closed.
		\end{enumerate}
	\end{lemma}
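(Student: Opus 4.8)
The plan is to handle the four assertions in order of increasing difficulty: (3), and then (2) and (4), are closure statements that one verifies by forming unions and intersections along a chain and checking that the result is again a condition, while the Prikry property (1) is the only substantial part. For (3), observe that when $m \le \ell^p$ the poset $\po_{<m}/p\uhr m$ is, by the definition of a condition, just the finite product over $n<m$ of the posets of partial functions from $\kappa_\omega^{++}$ to $\kappa_n$ of size $\le\kappa_\omega$ extending $f_n^p$, ordered by inclusion --- and there $\le$ and $\le^*$ agree. The coordinatewise union of a $\le$-increasing chain of length $\le\kappa_\omega$ is again such a sequence, hence an upper bound, so the order is $\kappa_\omega^+$-closed.

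For (2) and (4), note that $\le^*$ keeps $\ell$ fixed, so a $\le^*$-increasing chain $\la p^\xi \mid \xi<\rho\ra$ stays in a single $\ell$-stratum; the successor case is trivial, so take $\rho$ a limit, with $\rho<\kappa_m$ for (2) and $\rho<\kappa_{\ell^p}$ for (4). Coordinates carrying only a partial function are handled by unions of size $\le\kappa_\omega$ exactly as in (3). For a coordinate $n$ carrying a triple $\la a_n,A_n,f_n\ra$ (necessarily $n\ge m$ in case (2) and $n\ge\ell^p$ in case (4)) we have $|a_n^{p^\xi}|<\kappa_n$, since $\rng(a_n^{p^\xi})\in[\lambda_n]^{<\kappa_n}$, so $\bar a_n:=\bigcup_\xi a_n^{p^\xi}$ still has size $<\kappa_n$; the maximal indices $\gamma_n^{p^\xi}=\max(\rng(a_n^{p^\xi}))$ form a $\le_{E_n}$-increasing sequence, so by $\kappa_n$-directedness of $\le_{E_n}$ (\cite{Gitik-HB}) we fix a common $\le_{E_n}$-upper bound $\gamma_n^*$ and append the pair $\la\eta^*,\gamma_n^*\ra$ on one fixed fresh point $\eta^*<\kappa_\omega^{++}$ (which exists since all domains in play have total size $\le\kappa_\omega$), obtaining an order-preserving $\hat a_n$ with $\le_{E_n}$-maximum $\gamma_n^*$ and with domains still nested across $n$. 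One then takes $A_n^*:=\bigcap_\xi \pi_{\gamma_n^*,\gamma_n^{p^\xi}}^{-1}(A_n^{p^\xi})$, which lies in $E_n(\gamma_n^*)$ by the defining property $j_n(\pi_{\gamma_n^*,\gamma_n^{p^\xi}})(\gamma_n^*)=\gamma_n^{p^\xi}$ of the Rudin-Kiesler projections together with the $\kappa_n$-completeness of $E_n(\gamma_n^*)$ (here $\rho<\kappa_n$), and shrinks it once more on a measure-one set to restore the two coherence clauses of Definition \ref{def-relevant}. The remaining verifications --- $\dom(\hat a_n)$ stays disjoint from $\dom(f_n)$, $\kappa_n\in\rng(\hat a_n)$, and the $k_n$'s can be kept nondecreasing and unbounded --- are routine, and the resulting sequence is the desired upper bound. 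The closure degrees $\kappa_m$ and $\kappa_{\ell^p}$ reflect precisely the size bound on the $a_n$-component of the least extender-coordinate present.

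For (1) the plan is the standard fusion argument for Prikry-type extender forcings. Factoring $\po/p$ as the product of $\po_{<\ell^p}/p\uhr\ell^p$ (the partial-function coordinates, which by (3) is $\kappa_\omega^+$-closed and hence contributes nothing) and $\po_{\ge\ell^p}/p\dhr\ell^p$, it suffices to treat $\po_{\ge\ell^p}$. Given a statement $\sigma$ and a condition, one builds a direct extension by a fusion running along the coordinates: the key step at coordinate $\ell$ is to shrink $A_\ell$ so that, for every one of the fewer than $\kappa_\ell$ many stems of Prikry points reaching coordinate $\ell$, the further behaviour of $\sigma$ is uniform in the $\ell$-th Prikry point $\nu\in A_\ell$; this uses the $\kappa_\ell$-completeness of $E_\ell(\gamma_\ell)$ and, crucially, the freedom to move around indices of $E_\ell$ without changing their ultrafilter content or their Rudin-Kiesler projections, furnished by the goodness apparatus of Definition \ref{def-goodpoints} and Lemma \ref{lem-Observationsgoodness}, while the amalgamation over all stems and coordinates is carried by the $\kappa_{\ell+1}$-closure of the direct-extension order of $\po_{>\ell}$ from (2). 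The $\le^*$-limit of the resulting sequence of conditions (legitimate by (2) and (4)) is a direct extension $p^*\ge^* p$, and if some $q\ge p^*$ decided $\sigma$ then a $q$ with a shortest stem would still have its last Prikry point removable without altering the decision --- a contradiction --- so $p^*$ already decides $\sigma$; the statements for $\po_{<m}$ (trivial, since $\le$ and $\le^*$ coincide there below a suitable condition) and for $\po_{\ge m}$ follow the same pattern. I expect the main obstacle to be exactly this homogenization-and-bookkeeping step: propagating the shrinkings of the $A_\ell$'s through all stems and all coordinates while keeping the tail a genuine condition (nested domains, $k_n$-relevant pairs, a nondecreasing unbounded sequence $\la k_n\ra$). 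This is the technical heart of the short-extenders method, carried out in detail in \cite{GitUng-SEF} and \cite{Gitik-EBF1}, and I would invoke it rather than reproduce it.
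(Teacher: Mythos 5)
The paper offers no proof of this lemma: it is stated as a list of ``immediate consequences of the definitions'', with the technical content (above all the Prikry property) delegated to \cite{GitUng-SEF} and \cite{Gitik-EBF1}, so the only comparison to make is with that standard argument, and your sketch matches it and is essentially correct. Item (3) is indeed the coordinatewise-union argument; for (2) and (4) your limit construction is right, including the key point that the union of the $a_n$'s need not have a $\leq_{E_n}$-maximal range element and must be repaired by appending a fresh top pair and intersecting the pulled-back measure-one sets --- just note explicitly that the new maximum $\gamma_n^*$ must also be chosen $k_n$-good (possible, since good ordinals are unbounded in $\lambda_n$ and $\leq_{E_n}$ is $\kappa_n$-directed), so that the enlarged pair stays $k_n$-relevant; and for (1) deferring the fusion/homogenization machinery to the cited sources is exactly what the paper itself does. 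One claim I would correct: the Prikry property for $\po_{<m}$ is not ``trivial because $\leq$ and $\leq^*$ coincide there'' --- that coincidence holds only below conditions $p\uhr m$ with $m\leq\ell^p$; above a condition with $\ell^p<m$ the lower part still carries finitely many extender coordinates, and one needs the same fusion argument (now over finitely many coordinates), not a closure triviality.
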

	
	The last property implies that the forcing $\po$ does not add new bounded subsets to $\kappa_{\omega}$.
	Next, we state a technical strengthening of the Prikry Lemma which follows from the argument of its proof. 
	\begin{lemma}\label{Lem-meetdense}
		Let $D \subseteq \po$ be an open dense set (in the usual order $\leq$). For every condition $p \in \po$ there are $k \geq \ell^p$ and $p^* \geq^* p$ so that for every $\vec{\nu} = \la \nu_{\ell^p},\dots, \nu_{k-1}\ra \in \prod_{\ell^p \leq n < k} A_n^{p^*}$, $p^* \fr \vec{\nu}$ belongs to $D$ \footnote{Note that when $k = \ell^p$, the product of the sets $A_n$ is empty, and therefore $p^* \in D$.}.
	\end{lemma}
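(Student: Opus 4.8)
The plan is to establish this as the \emph{strong Prikry property} for $\po$, by rerunning the proof of the Prikry Lemma (Lemma~\ref{Lem-PObasicproperties}(1)) in a stratified way. Fix $p \in \po$ and abbreviate $\ell = \ell^p$. Call a condition $r$ \emph{$D$-ripe} if some direct extension $r' \geq^{*} r$ lies in $D$, and for an arbitrary $r$ let $\rho(r)$ be the least $n$ such that $r \fr \vec{\mu}$ is $D$-ripe for some stem $\vec{\mu}$ of length $n$ (i.e.\ $\vec{\mu} \in \prod_{\ell^{r} \leq m < \ell^{r}+n} A_m^{r}$). Since $D$ is dense and open, every extension of $r$ in $D$ is $\geq^{*} r \fr \vec{\mu}$ for some finite $\vec{\mu}$ and hence witnesses $D$-ripeness of $r \fr \vec{\mu}$; thus $\rho(r) < \omega$ for every $r$. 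The core of the argument is to produce $p^{\diamond} \geq^{*} p$ and an integer $n^{*}$ with:
\begin{enumerate}
\item[(a)] for every $j < n^{*}$ and every $\vec{\nu} \in \prod_{\ell \leq m < \ell+j} A_m^{p^{\diamond}}$ one has $\rho(p^{\diamond} \fr \vec{\nu}) = n^{*}-j$ (so in particular no direct extension of $p^{\diamond} \fr \vec{\nu}$ lies in $D$ when $|\vec{\nu}| < n^{*}$);
\item[(b)] for every $\vec{\nu} \in \prod_{\ell \leq m < \ell+n^{*}} A_m^{p^{\diamond}}$ the condition $p^{\diamond} \fr \vec{\nu}$ is $D$-ripe, and a witness $q_{\vec{\nu}} \geq^{*} p^{\diamond} \fr \vec{\nu}$ with $q_{\vec{\nu}} \in D$ may be chosen so that $q_{\vec{\nu}}$ agrees with $p^{\diamond} \fr \vec{\nu}$ on all coordinates $m < \ell+n^{*}$ and is obtained from it only by shrinking the measure-one sets $A_m^{p^{\diamond}}$ at coordinates $m \geq \ell+n^{*}$.
\end{enumerate}

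To build $p^{\diamond}$ I would follow the bookkeeping of the Prikry Lemma's proof: construct a $\leq^{*}$-increasing $\omega$-chain $p = p^{(0)} \leq^{*} p^{(1)} \leq^{*} \cdots$ and let $p^{\diamond}$ be its limit, which is legitimate because $\leq^{*}$ on $\po/p$ is $\kappa_{\ell}$-closed (Lemma~\ref{Lem-PObasicproperties}(4)). At stage $m$, coordinates below $m$ have been stabilized; there are fewer than $\kappa_m$ many stems $\vec{\nu}$ formed from the already-fixed sets $A_j^{(m)}$ $(\ell \leq j < m)$, and for each of them one enlarges $a_m$ and $f_m$ to meet the next demand of $D$ in a fixed enumeration and then uses the $\kappa_m$-completeness of the measure $E_m(\gamma_m)$ to shrink $A_m$, for all those $\vec{\nu}$ simultaneously, to a set homogeneous for the question ``does plugging in one more point lower the $\rho$-value of the continuation, or make it $D$-ripe''; the $\kappa_m$-closure of $\leq^{*}$ on $\po_{\geq m}$ (Lemma~\ref{Lem-PObasicproperties}(2)) absorbs these fewer-than-$\kappa_m$-many shrinkings. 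The essential extra ingredient is Lemma~\ref{lem-Observationsgoodness}: since the indices occurring in $a_m$ are $k_m$-good, any fact about membership in $D$ witnessed at an index lying above $M_{m,k}(\gamma)$ reflects below $\gamma$, which is what forces the homogeneous decision---and, in clause (b), the choice of the witness $q_{\vec{\nu}}$---to depend only on data already frozen into $p^{\diamond}$ rather than on the particular Prikry points forming $\vec{\nu}$. I expect this \textbf{uniformization}---clause (b), making the witness in $D$ independent of the stem and attributable entirely to coordinates $\geq \ell+n^{*}$---to be the main obstacle, and it is exactly where the hypothesis that $\po$ is built from good ordinals is used; this is the part meant by ``follows from the argument of the proof'' of the Prikry Lemma.

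Granting (a) and (b), the conclusion follows by a completeness/closure amalgamation. Put $k = \ell+n^{*}$, and note $\left| \prod_{\ell \leq m < k} A_m^{p^{\diamond}} \right| \leq \kappa_{k-1} < \kappa_k$. For each coordinate $m \geq k$ set $B_m = \bigcap_{\vec{\nu}} A_m^{q_{\vec{\nu}}}$, the intersection taken over all stems $\vec{\nu}$ of length $n^{*}$; by (b) each $A_m^{q_{\vec{\nu}}}$ lies in $E_m(\gamma_m^{p^{\diamond}})$, and since $\kappa_m \geq \kappa_k$, $\kappa_m$-completeness gives $B_m \in E_m(\gamma_m^{p^{\diamond}})$. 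Let $p^{*} \geq^{*} p^{\diamond}$ be obtained by replacing $A_m^{p^{\diamond}}$ with $B_m$ for every $m \geq k$ and leaving all other data---in particular the sets $A_m^{p^{\diamond}}$ for $\ell \leq m < k$---unchanged. Fix $\vec{\nu} \in \prod_{\ell \leq m < k} A_m^{p^{*}} = \prod_{\ell \leq m < k} A_m^{p^{\diamond}}$. On coordinates $m < k$, both $p^{*} \fr \vec{\nu}$ and $q_{\vec{\nu}}$ agree with $p^{\diamond} \fr \vec{\nu}$ by (b); on coordinates $m \geq k$ the assignment functions and the $f$-parts of $p^{*} \fr \vec{\nu}$ and of $q_{\vec{\nu}}$ coincide (again by (b)), while the measure-one set of $p^{*} \fr \vec{\nu}$ at coordinate $m$, namely $B_m$, is contained in that of $q_{\vec{\nu}}$. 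Hence $q_{\vec{\nu}} \leq^{*} p^{*} \fr \vec{\nu}$, so $p^{*} \fr \vec{\nu} \in D$ because $q_{\vec{\nu}} \in D$ and $D$ is open. As $\vec{\nu}$ was arbitrary, $p^{*}$ and $k$ are as required.
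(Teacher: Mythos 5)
The paper offers no written proof of this lemma --- it is stated as following ``from the argument of the proof'' of the Prikry Lemma, which is itself imported from Gitik and Gitik--Unger --- so your sketch has to stand on its own, and as it stands it has a genuine gap at exactly the point you yourself flag as the main obstacle. Your final amalgamation step is fine \emph{granted} clause (b): since the witnesses $q_{\vec{\nu}}$ are assumed to have the same $a_m$- and $f_m$-parts as $p^{\diamond}$ (hence the same maximal coordinates $\gamma_m$), the fewer-than-$\kappa_k$ many sets $A_m^{q_{\vec{\nu}}}$ all lie in the single measure $E_m(\gamma_m)$ and can be intersected by $\kappa_m$-completeness. But clause (b) is the entire content of the lemma, and you give no argument for it. A direct extension of $p^{\diamond} \fr \vec{\nu}$ witnessing membership in $D$ will in general strengthen the assignment functions $a_m$ and the Cohen parts $f_m$ at coordinates $m \geq \ell + n^{*}$, and --- worse --- strengthen the $f_m$-parts at coordinates below $\ell + n^{*}$, including the coordinates just converted by the stem; these lower-part demands depend on $\vec{\nu}$ and are in general pairwise incompatible, so they cannot be absorbed into $p^{\diamond}$ by the stage-by-stage bookkeeping you describe (your construction freezes coordinates below $m$ at stage $m$ and never revisits them). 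Showing that the witness can be taken to avoid any such strengthening, uniformly in the stem, is precisely the delicate induction in the Gitik--Unger Prikry-property argument, and your proposal replaces it with a one-line appeal.

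Moreover, the tool you invoke for that appeal is the wrong one. Lemma \ref{lem-Observationsgoodness} (goodness of indices and reflection of types) is used in this paper, as in Gitik's work, for the $\kappa_\omega^{++}$-c.c.\ of the quotient order $(\po,\rar)$ --- the ability to move extender indices without changing their ultrafilter-theoretic content --- not for the Prikry property of $(\po,\leq,\leq^{*})$. What the Prikry-type arguments actually use are the $\kappa_m$-closure of $\leq^{*}$ on $\po_{\geq m}$, the $\kappa_m$-completeness of the measures $E_m(\alpha)$, and the $\kappa_m$-directedness of $\leq_{E_m}$ (to put fewer than $\kappa_m$ many enlarged $a_m$-parts under a common maximal coordinate and pull back the measure-one sets). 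So the claim that goodness ``is exactly where'' the uniformization comes from is a misattribution, and without a correct replacement for it clauses (a) and (b) remain unproved. The surrounding scaffolding (the ripeness rank, homogenization of a two-valued question over $A_m$ by completeness, the counting $|\prod_{\ell \leq m < k} A_m^{p^{\diamond}}| \leq \kappa_{k-1} < \kappa_k$) is sound, but the lemma is not yet proved by this outline.
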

	
	A standard application of Lemma \ref{Lem-meetdense} it that the forcing $\po$  preserves $\kappa_\omega^+$. We sketch the argument.
	\begin{corollary}
		$\po$ does not collapse $\kappa_\omega^+$.
	\end{corollary}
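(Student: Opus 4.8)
The plan is to rule out the only dangerous scenario, namely that $\po$ adds a cofinal function from some regular $\mu<\kappa_\omega$ into $\kappa_\omega^+$; everything else follows from soft considerations. First I would record the reductions. By Lemma~\ref{Lem-PObasicproperties}(4) the forcing $\po$ adds no bounded subsets of $\kappa_\omega$, so every $\kappa_n$ remains a cardinal and hence $\kappa_\omega$ remains a singular cardinal of cofinality $\omega$ in any generic extension $V[G]$. Since the only $V[G]$-cardinals in the interval $[\kappa_\omega,\kappa_\omega^+]$ are $\kappa_\omega$ and, possibly, $\kappa_\omega^+$, a collapse of $\kappa_\omega^+$ would force $\cf(\kappa_\omega^+)$ to be a regular cardinal $\leq\kappa_\omega$ distinct from the singular $\kappa_\omega$, hence some $\mu<\kappa_\omega$ which is already regular in $V$. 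So it suffices to derive a contradiction from the assumption that some $p\in\po$ forces a name $\dot f$ to be an increasing cofinal map from $\mu$ into $\kappa_\omega^+$.

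Fix $m<\omega$ with $\mu<\kappa_m$ and, by taking finitely many one-point extensions, pass to a stronger condition $p'$ with $\ell^{p'}\geq m$; it still forces $\dot f$ to be increasing and cofinal. The core of the argument is a fusion of length $\mu$ above $p'$. For each $i<\mu$ let $D_i=\{r\in\po : r \text{ decides } \dot f(i)\}$, an open dense set. I would build a $\leq^*$-increasing sequence $\la q_i\mid i\leq\mu\ra$ with $q_0=p'$ as follows: at a successor step, apply Lemma~\ref{Lem-meetdense} to $D_i$ and $q_i$ to obtain an integer $k_i\geq\ell^{p'}$ and $q_{i+1}\geq^* q_i$ such that $q_{i+1}\fr\vec\nu\in D_i$ for every $\vec\nu\in\prod_{\ell^{p'}\leq n<k_i}A_n^{q_{i+1}}$; at a limit step, take a $\leq^*$-upper bound, which exists because the direct extension order above $p'$ is $\kappa_{\ell^{p'}}$-closed by Lemma~\ref{Lem-PObasicproperties}(4) and the sequence has length $\leq\mu<\kappa_m\leq\kappa_{\ell^{p'}}$. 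Put $q^*=q_\mu$.

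Now I would observe that $q^*$ inherits the decision property of every $q_{i+1}$. Since $q^*\geq^* q_{i+1}$, for $\vec\nu\in\prod_{\ell^{p'}\leq n<k_i}A_n^{q^*}$ the one-point extension $q^*\fr\vec\nu$ is a direct extension of $q_{i+1}\fr\vec\nu'$, where $\vec\nu'$ is obtained from $\vec\nu$ by applying the relevant projections $\pi_{\gamma_n,\gamma'_n}$; since $q_{i+1}\fr\vec\nu'\in D_i$ and $D_i$ is open, $q^*\fr\vec\nu$ decides $\dot f(i)$. Moreover, any extension of $q^*$ deciding $\dot f(i)$ refines one of these one-point extensions of $q^*$, so the value it forces for $\dot f(i)$ lies in the set $B_i$ of values forced by the conditions $q^*\fr\vec\nu$ with $\vec\nu\in\prod_{\ell^{p'}\leq n<k_i}A_n^{q^*}$. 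This set $B_i$ belongs to $V$ and has size at most $|\prod_{\ell^{p'}\leq n<k_i}\kappa_n|<\kappa_\omega$. Hence $q^*\force\rng(\dot f)\subseteq B$ with $B=\bigcup_{i<\mu}B_i$ of size at most $\kappa_\omega$ in $V$; being a set of ordinals below $\kappa_\omega^+$ of size $\leq\kappa_\omega$, $B$ is bounded below $\kappa_\omega^+$, contradicting that $q^*$ forces $\dot f$ to be cofinal in $\kappa_\omega^+$.

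I expect the main obstacle to be the bookkeeping in the fusion: checking that the single direct extension $q^*$ of all the $q_{i+1}$ still densely decides each $\dot f(i)$ along one-point extensions of bounded stem length. This rests on two standard features of the extenders-based forcing which I would isolate first as a lemma --- that the direct extension order is compatible with one-point extensions (if $q\geq^* q'$ then $q\fr\vec\nu$ extends $q'\fr\vec\nu'$ for the appropriately projected $\vec\nu'$), and that passing to a direct extension does not change the stem length $\ell$. Granting these, the concluding cardinality computation is routine.
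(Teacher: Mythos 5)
Your proof is correct and follows essentially the same route as the paper: reduce a collapse of $\kappa_\omega^+$ to a cofinal function from some $\rho<\kappa_\omega$, build a $\leq^*$-increasing sequence of length $\rho$ using Lemma \ref{Lem-meetdense} together with the $\kappa_{\ell^p}$-closure of the direct extension order, and conclude by the counting argument that the range is forced into a ground-model set of size $\leq\kappa_\omega$. Your extra care about how the final direct extension inherits the decisions of the intermediate conditions is a detail the paper leaves implicit, not a different argument.
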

	\begin{proof}[Proof Sketch.]
		The fact that $\kappa_{\omega}$ is singular in $V$ implies that if $\kappa_\omega^+$ is collapsed then $\po$ introduces a cofinal function $f : \rho \to \kappa_{\omega}^+$ from some $\rho < \kappa_{\omega}$. Let $\name{f}$ be a $\po$-name for a function from $\rho$ to $\kappa_{\omega}^+$, and $p$ be a condition $\po$ with $\kappa_{\ell^p} > \rho$. For every $i < \rho$, let $D_i$ be the dense open subset of $\po$ of conditions $q \in \po$ which decide the ordinal value of $\name{f}(\can{i})$. Since the direct extension order of $\po/p$ is $\kappa_{\ell^p}$-closed, we can repeatedly use Lemma \ref{Lem-meetdense} and construct a $\leq^*$-increasing sequence of conditions $\la p^i \mid i \leq \rho\ra$ such that for every $i < \rho$ there exists some $n_i \geq \ell^p$ so that $p^i \fr \vec{\nu}$ belongs to $D_i$ for all $\vec{\nu}\in \prod_{\ell^p\leq n < n_i}A^{p^i}_n$. 
		Let $p^* = p^\rho$. It follows that there are functions $F_i$, $i < \rho$ with $F_i : \prod_{\ell^p \leq n < n_i} A^{p^*}_n \to \kappa_\omega^+$ for all $i$, 
		such that for each $i < \rho$ and $\vec{\nu} \in \prod_{\ell^p \leq n < n_i} A^{p^*}_n$, $p^* \fr \vec{\nu} \force \name{f}(\can{i}) = \can{F_i}(\vec{\nu})$.
		It follows that $p^*$ forces that $\rng{\name{f}}$ is a subset of $X = \bigcup_{i<\gamma}\rng(F_i)$, which has size $|X| \leq \kappa_{\omega}$. Consequently, $p^*$ forces that $\name{f}$ is bounded in $\kappa_\omega^+$.
	\end{proof}

	\subsection{The essential generic information}
	Let $G \subseteq \po$ be a generic filter and denote ${\kappa_\omega^{++}}^V$ by $\lambda$. Without loss of generality, we assume $G$ contains a condition $p$ with $\ell^p = 0$. 
	A standard density argument shows that for every $\alpha<\lambda$ and $n < \omega$ there is a condition $p \in G$ with $\ell^p > n$, so that $\alpha \in \dom(f_n^p)$\footnote{note that if $\alpha \in \dom(a_n^p)$ then $\alpha \in \dom(f_n^q)$ for every extension $q$ of $p$ which involves at least $n$ one-point extensions.}. 
	It is easy to see that the value $f_n^p(\alpha) < \kappa_n$ does not depend on the choice of the condition $p \in G$, and we denote it by $t_\alpha(n)$. It follows that  $t_\alpha \in \prod_n \kappa_n$.
	Also, recall that by our definition of conditions $p \in \po$, $\kappa_n \in \rng(a_n^p)$ for some $p \in G$. Let $\alpha^0_n  < {\kappa_\omega^{++}}$ be the unique value for which 
	$\kappa_n  = a_n^p(\alpha^0_n)$ and define $\rho_n = t_{\alpha^0_n}(n)$. The sequence $\vec{\rho} = \la \rho_n \mid n < \omega\ra$ is generic for the diagonal Prikry forcing (\cite{Gitik-HB}) by the normal measures $\la E_n(\kappa_n) \mid n < \omega\ra$. 
	
	For the proof of Theorem \ref{thm1}, we will only care about functions $t_\alpha$ which originate in the ``extender components`` of $G$, namely, for values $\alpha$ which belong to $\dom(a_m^p)$ for some $p \in G$ (and thus, also to $\dom(a_n^p)$ for every $n \geq m$). The following definition makes this notion precise.
	\begin{definition}
		We define the set  $\AP_G \subseteq \lambda$ of active points in $V[G]$ by $\AP_G = \{ \alpha < \lambda \mid \alpha \in \dom(a_n^p) \text{ for some } p \in G \text{ and } n < \omega\}$. 
	\end{definition}
	
	A simple density argument shows that the set $\AP_G$ is unbounded in $\lambda$. 
	Let $\vec{t} = \la t_\alpha \mid \alpha \in \AP_G\ra$. It is easy to see $\vec{t}$ is increasing in the ordering $<^*$.
	Like most extender-based forcings, it is typical that $\vec{t}$ is forms a scale 
	in a product $\prod_n \tau_n$ of cardinals $\tau_n > \rho_n$ such that, loosely speaking, each $\tau_n$ is to $\rho_n$ what $\lambda_n$ is to $\kappa_n$. An example of such a result involving different extender-based posets can be found in \cite{Gitik-HB}. For an argument which involves short extenders forcings, we refer the reader to \cite{Gitik-EBF1}. We state two relevant results.
	\begin{lemma}\label{lem-genericscaleproduct}
		${}$
		\begin{enumerate}
			\item Suppose that for each $n < \omega$, $\lambda_n = j_n(h_n)(\kappa_n)$ for some function $h_n : \kappa_n \to \kappa_n$. Then in $V[G]$, $\vec{t}$ is a scale on the product $\prod_n h_n(\rho_n)$. For example if $\lambda_n = \kappa_n^{+n+2}$ then $\vec{t}$ is cofinal in $\prod_n \rho_n^{+n+2}$.
			\item If $\lambda_n = j_n(\kappa_n)$ for each $n <\omega$, then $\vec{t}$ is a scale on $\prod_n \kappa_n$. 
		\end{enumerate}
	\end{lemma}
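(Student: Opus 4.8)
The plan is to check the three defining clauses of a scale for the sequence $\vec t = \la t_\alpha \mid \alpha \in \AP_G\ra$, after re-indexing it along its order type; since $\AP_G$ is unbounded in the regular cardinal $\lambda = (\kappa_\omega^{++})^V$, that order type is exactly $\lambda$, so $\vec t$ may be viewed as a $\lambda$-sequence. I will set $\tau_n = h_n(\rho_n)$ in case $(1)$ and $\tau_n = \kappa_n$ in case $(2)$, and treat both cases in parallel. The uniformizing observation is that every index $\beta \in [\kappa_n,\lambda_n)$ is of the form $\beta = j_n(H)(\kappa_n)$ for a canonically chosen $H : \kappa_n \to \kappa_n$, and that the inequality $\beta < \lambda_n$ translates --- by \L o\'s, using $\lambda_n = j_n(h_n)(\kappa_n)$ --- into ``$H(\xi) < h_n(\xi)$ for $E_n(\kappa_n)$-almost every $\xi$'' in case $(1)$, whereas in case $(2)$ the analogous constraint ``$H(\xi) < \kappa_n$'' is automatic. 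Thus $\tau_n$ plays, relative to the normal Prikry point $\rho_n$, the role that $\lambda_n$ plays relative to $\kappa_n$. Since it is already noted that $\vec t$ is $<^*$-increasing, two things remain: (a) $t_\alpha(n) < \tau_n$ for all but finitely many $n$, for each $\alpha \in \AP_G$; and (b) $<^*$-cofinality of $\vec t$ in $\prod_n \tau_n$.

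For (a) I would fix $\alpha \in \AP_G$, and for each large enough $n$ pick a condition $p \in G$ with $n \ge \ell^p$ and $\alpha \in \dom(a_n^p)$ (the value $c_n := a_n^p(\alpha)$ not depending on the choice, since $a_n$ only grows under $\leq$), then read off $t_\alpha(n)$ from the Prikry point $\nu_n$ that $G$ realizes at level $n$. Writing $\gamma_n = \max(\rng(a_n^p))$, clause $(4)$ of the definition of $\po$ together with the commutativity requirement in Definition~\ref{def-relevant} give $\pi_{\gamma_n,\kappa_n}(\nu_n) = \rho_n$, while the one-point extension rule gives $\pi_{\gamma_n,c_n}(\nu_n) = t_\alpha(n)$. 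Writing $c_n = j_n(H_n)(\kappa_n)$ and using the constraints imposed on $A_n^p$, one reads $t_\alpha(n) = H_n(\rho_n)$; since $H_n(\xi) < h_n(\xi)$ (resp. $< \kappa_n$) on an $E_n(\kappa_n)$-large set and $\vec\rho$ is generic for the diagonal Prikry forcing by $\la E_n(\kappa_n) \mid n<\omega\ra$, this yields $t_\alpha(n) < \tau_n$ for all but finitely many $n$.

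For (b) --- the main work --- given a name $\name g$ and a condition $p \in G$ forcing $\name g \in \prod_n \tau_n$, I would build an extension forcing $\name g <^* t_\alpha$ for a fresh $\alpha < \lambda$. First, using the Prikry property of $\po$ and the $\kappa_{\ell^p}$-closure of $\leq^*$ on $\po/p$, pass to a direct extension $p^* \geq^* p$ for which each $\name g(n)$ is decided by one-point extensions: for every $n$ there are $k_n > n$ and a function $G_n$ on $\prod_{\ell^p \le i < k_n} A_i^{p^*}$ with $p^* \fr \vec\nu \force \name g(n) = G_n(\vec\nu)$ for every $\vec\nu$ in the domain --- an iterated application of Lemma~\ref{Lem-meetdense}. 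Since $p^* \fr \vec\nu$ forces $\name g(n) < \tau_n$ and $\pi_{\gamma_n,\kappa_n}(\nu_n) = \rho_n$ on a large set, a pointwise supremum over the (fewer than $\kappa_n$ many) coordinates of $\vec\nu$ other than the $n$th extracts a single $\bar G_n : \kappa_n \to \kappa_n$, still $E_n(\kappa_n)$-almost-everywhere below $h_n$ (resp. below $\kappa_n$), with $G_n(\vec\nu) < \bar G_n(\pi_{\gamma_n,\kappa_n}(\nu_n))$ for almost every admissible $\vec\nu$. Let $b_n$ be the $E_n$-index represented over the maximal index $\gamma_n$ by $\bar G_n \circ \pi_{\gamma_n,\kappa_n}$; the first paragraph's translation gives $b_n < \lambda_n$, so $b_n$ is a legitimate index. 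Then I pick $\alpha < \lambda$ above $\bigcup_n \dom(a_n^{p^*})$, adjoin $\la \alpha, b_n\ra$ to each $a_n^{p^*}$ for $n \ge \ell^{p^*}$, and shrink the $A_n^{p^*}$ to restore $k_n$-relevance (order-preservation and the projection-commutativity clauses of Definition~\ref{def-relevant}, absorbing $b_n$ below the maximal index by $\leq_{E_n}$-directedness) and to guarantee $\pi_{\gamma_n,b_n}(\nu) > G_n(\vec\nu)$ for all admissible $\vec\nu$ and $\nu \in A_n$. The resulting condition $q$ extends $p$, has $\alpha \in \dom(a_n^q)$ for all $n \ge \ell^q$, and forces $t_\alpha(n) = \pi_{\gamma_n,b_n}(\nu_n) > G_n(\vec\nu) = \name g(n)$ for every $n \ge \ell^q$; hence $\name g <^* t_\alpha$ with $\alpha \in \AP_G$. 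To conclude that $\vec t$ is genuinely a scale of length $\lambda$, I would note that every $\vec t \uhr \delta$ with $\delta < \lambda$ is $<^*$-bounded (same cofinality argument, run in $V[G]$) while $\lambda$ stays regular, using that $\po$ has the $\kappa_\omega^{++}$.c.c.\ by the index-moving phenomenon discussed after Lemma~\ref{lem-Observationsgoodness}.

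The step I expect to be the main obstacle is the bookkeeping in (b): choosing the indices $b_n$ for all $n \ge \ell^{p^*}$ simultaneously while keeping each modified pair $(a_n \cup \{\la \alpha, b_n\ra\}, A_n')$ $k_n$-relevant --- order-preserving, with a $\leq_{E_n}$-maximal element, and with the projection maps among $\kappa_n$, $b_n$, the old indices and the top index all commuting on the shrunken $A_n'$ --- and at the same time securing the uniform domination $\pi_{\gamma_n,b_n}(\nu) > G_n(\vec\nu)$. This is precisely the technical core of the extender-based scale constructions, which I would carry out following \cite{Gitik-HB} in case $(2)$ and \cite{Gitik-EBF1} in case $(1)$ rather than reproduce in full.
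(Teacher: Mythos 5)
The paper does not actually prove this lemma --- it is quoted as a known property of extender-based constructions, with the argument deferred to \cite{Gitik-HB} and \cite{Gitik-EBF1} --- and your sketch follows the same standard route and likewise defers the technical core to those sources, which is fine in principle. However, several steps as you state them are wrong or gapped. The ``uniformizing observation'' that every index $\beta\in[\kappa_n,\lambda_n)$ has the form $j_n(H)(\kappa_n)$ for some $H:\kappa_n\to\kappa_n$ is false: under $\GCH$ only $\kappa_n^{+}$ many ordinals below $j_n(\kappa_n)$ are of that form, while $\lambda_n\geq\kappa_n^{++}$. It is also unnecessary: for clause (a), given $c_n=a_n^p(\alpha)$ and $\gamma_n=\max(\rng(a_n^p))$, apply \L o\'s directly to $E_n(\gamma_n)$ --- since $j_n(\pi_{\gamma_n,c_n})(\gamma_n)=c_n<\lambda_n=j_n(h_n)(\kappa_n)$ and $j_n(\pi_{\gamma_n,\kappa_n})(\gamma_n)=\kappa_n$, the set $\{\nu : \pi_{\gamma_n,c_n}(\nu)<h_n(\pi_{\gamma_n,\kappa_n}(\nu))\}$ is $E_n(\gamma_n)$-large, and a density (direct extension) argument gives $t_\alpha(n)<h_n(\rho_n)$ for almost all $n$; in case (2) boundedness is trivial because the projections map into $\kappa_n$.

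The more serious gap is in your cofinality argument. After reducing $\dot g(n)$ to a function $G_n$ on $\prod_{\ell^p\le i<k_n}A_i^{p^*}$ with $k_n>n$, your ``pointwise supremum over the (fewer than $\kappa_n$ many) coordinates other than the $n$th'' fails: the coordinates at levels $i$ with $n<i<k_n$ range over sets of size $\kappa_i\geq\kappa_{n+1}$, and a supremum of that many ordinals below $h_n(\rho_\nu)<\kappa_n$ need not stay below $h_n(\rho_\nu)$. The standard repair, and what Gitik's proofs actually do, is to first force $k_n=n+1$, i.e.\ make $\dot g(n)$ decided by the coordinates up to level $n$ alone, using the $\kappa_{n+1}$-closure of the direct extension order of $\po_{\geq n+1}$ (and the closure of the Cohen-like lower part) together with Lemma \ref{Lem-meetdense}-style arguments; only then is the sup over the fewer than $\kappa_n$ many lower tuples legitimate (using also that $h_n(\rho_\nu)$ is regular for measure-one many $\nu$, by \L o\'s from the regularity of $\lambda_n$). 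Two further points: the new indices $b_n$ you adjoin must be $k_n$-good for the extended condition to lie in $\po$, which $j_n(\bar G_n)(\kappa_n)$ need not be --- this is easily fixed by passing to a good index above it, since good ordinals are club in $\lambda_n$ and $\beta<\beta'$ implies $\pi_{\gamma,\beta}(\nu)<\pi_{\gamma,\beta'}(\nu)$ on an $E_n(\gamma)$-large set, but it must be said. Finally, your concluding appeal to ``$\po$ has the $\kappa_\omega^{++}$.c.c.'' contradicts the paper: $(\po,\leq)$ fails to preserve $\kappa_\omega^{++}$, and the chain condition holds only for the quotient order $(\po,\rar)$; fortunately that step is not needed, since the paper's notion of scale requires only $<^*$-increase, cofinality in the product, and eventual boundedness, not that the index set have regular order type.
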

	
	As will be shown below, the generic sequence $\vec{t}$ has some appealing properties which fit the results established in Section \ref{section-pre}.
	Two apparent issues need to be taken care of before we can apply the results of Section \ref{section-pre} to $\vec{t}$ in $V[G]$. 
	The first one is that the indices of the sequence $\vec{t}$ are not all the ordinals below $\lambda$, but only an unbounded subset. This issue is merely cosmetic, and it is straightforward to verify that all the results of Section \ref{section-pre} apply to sequences $\vec{f}$ with domain $A \subseteq \lambda$, as long as we restrict the argument to domain points $\delta \in A$ (For example, the statement of Proposition \ref{prop-IAmain} applies to all points $\delta \in S_{\vec{a}} \cap C \cap A$ which are continuity points of $\vec{f}$). 
	The second issue, which is much more substantial and demands a revision of the forcing $(\po,\leq)$ is that $\lambda = {\kappa_\omega^{++}}^V$ need not be a cardinal in $V[G]$. Indeed, the forcing $\po$ fails to preserve ${\kappa_\omega^{++}}^V$ and does not generate a model in which $\SCH$ fails.
	
	Gitik resolved this by identifying a quotient order of $(\po,\leq)$, introduced by an equivalence relation $\lra$ on $\po$, which satisfies ${\kappa_\omega^{++}}$.c.c but does not affect the essential generic information $\vec{t}$. Namely, every two conditions $p,{p'}$ which are $\lra$ equivalent force the exact same statments about $\vec{t}$.
	We proceed to review the details. 

	\subsection{The order $\rar$}
	Fix integers $1 < k \leq n$, and let $\Lan_{n,k}$ be the language of the structure $\A_{n,k}$. 
	We define the $(n,k)$-type of an element $x \in \A_{n,k}$ to be the $\Lan_{n,k}$-type which is realized by $x$ in the model $\A_{n,k}$. We denote the type by $\tp_{n,k}(x)$ and identify it with a subset of $\kappa_n^{+k} = |\Lan_{n,k}|$.
	We will also need a relativized version of these types. 
	For every element $r \in \A_{n,k}$ let $\A^r_{n,k}$ be the model of the expanded language $\Lan_{n,k}^c$ in which a new constant symbol $c$ is interpreted as $r$, and 
	define the $(n,k)$-type $x \in \A_{n,k}$ relative to $r$ to be the $\Lan_{n,k}^c$-type realized by $x$ in the model $\A^r_{n,k}$. We denote the $r$-relativized type by $\tp^r_{n,k}(x)$. 
	
	Since we assume $V$ satisfies the $\GCH$, for each $n < \omega$ there are only $\kappa_n^+$ many functions $\pi : \kappa_n \to \kappa_n$, and only $\kappa_n^{++}$ many ultrafilters $U$ on $\kappa_n$. Therefore, if $k \geq 2$ every such function $\pi$ and ultrafilter $U$ are definable in the language of $\A_{n,k}$ which contain constants for every $\tau < \kappa_n^{++}$. The following is an immediate consequence. 
	\begin{lemma}\label{lemma-typequiv}
		Fix $n< \omega$ and $k \geq 2$. Let $x$ be a set in $[\lambda_n]^{<\kappa_n}$. The following features of $x$ are completely determined by its type $\tp_{n,k}(x)$:
		\begin{enumerate}
			\item $\otp(x) < \kappa_n$;
			\item the ultrafilter $E_n(\alpha)$ for every $\alpha \in x$;
			\item the projection maps $\pi_{\beta,\alpha}$ for every two ordinals $\alpha,\beta \in x$ with $\beta \geq_{E_n} \alpha$.
		\end{enumerate}
		Similarly, the relative type $\tp_{n,k}^r(x)$ determines the same for $x\cup r$ because it determines the type $\tp_{n,k}(r \cup x)$.
	\end{lemma}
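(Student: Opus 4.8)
The plan is to observe that each of the listed features is a first-order property of $x$ in the structure $\A_{n,k}$ whose truth depends only on which $\Lan_{n,k}$-formulas $x$ satisfies, i.e.\ only on $\tp_{n,k}(x)$. The crucial preliminary point, which I would establish first, is a \emph{definability} claim: since $V \models \GCH$, there are exactly $\kappa_n^+$ functions $\pi : \kappa_n \to \kappa_n$ and exactly $\kappa_n^{++}$ ultrafilters $U$ on $\kappa_n$; as $k \geq 2$ we have $\kappa_n^{++} \leq \kappa_n^{+k} = |\Lan_{n,k}|$, so the language of $\A_{n,k}$ contains a constant $c_\tau$ for every $\tau < \kappa_n^{++}$. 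Using a fixed $V$-definable enumeration of these functions and ultrafilters in ordertype $\leq \kappa_n^{++}$ (available from the well-order $<_{\chi^{+k}}$), each such $\pi$ and each such $U$ is named by a closed $\Lan_{n,k}$-term (or, at worst, is the unique solution of an $\Lan_{n,k}$-formula with parameters among the $c_\tau$). Consequently the assignments $\alpha \mapsto E_n(\alpha)$ and $(\alpha,\beta) \mapsto \pi_{\beta,\alpha}$, restricted to indices below $\lambda_n$, are $\Lan_{n,k}$-definable maps, since $E_n$ itself is a predicate of $\A_{n,k}$ and the Rudin-Kiesler order $\leq_{E_n}$ is definable from $E_n$.

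With the definability claim in hand I would argue the three items in turn. For (1): ``$\otp(x) = \xi$'' for each fixed $\xi < \kappa_n$ is expressible by an $\Lan_{n,k}$-formula $\psi_\xi(v)$ (using the constant $c_\xi$ and the definable well-order), and ``$\otp(x) < \kappa_n$'' is $\exists \xi < c_{\kappa_n}\, \psi_\xi(v)$ if $\kappa_n < \kappa_n^{+k}$ has a constant, or else is simply the assertion that $x$ is a set of ordinals of size $<\kappa_n$, which holds for all $x \in [\lambda_n]^{<\kappa_n}$ and is preserved because any $y$ realizing $\tp_{n,k}(x)$ must also be such a bounded-size set of ordinals; in either case the value $\otp(x)$, and a fortiori the fact that it is $<\kappa_n$, is read off from the type. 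For (2): for a fixed ultrafilter $U$, ``$E_n(\alpha) = U$ for every $\alpha \in x$'' is the $\Lan_{n,k}$-formula $\forall \alpha\,(\alpha \in v \rightarrow \phi_U(\alpha))$, where $\phi_U$ defines the (definable) set of $E_n$-indices carrying ultrafilter $U$; quantifying over the finitely-or-$\kappa_n^{++}$-many $U$'s, the type $\tp_{n,k}(x)$ determines the function $\alpha \mapsto E_n(\alpha)$ on $x$. For (3): for fixed $\alpha,\beta$ and a fixed function $\pi$, ``$\beta \geq_{E_n} \alpha$ and $\pi_{\beta,\alpha} = \pi$'' is again an $\Lan_{n,k}$-formula (by the definability claim), and the two ordinals $\alpha,\beta$ are themselves picked out within $x$ by their position in the definable well-order, which the type knows; so $\tp_{n,k}(x)$ determines all the projection maps among elements of $x$.

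For the final sentence, the relativized type $\tp_{n,k}^r(x)$ is by definition the $\Lan_{n,k}^c$-type of $x$ in $\A_{n,k}^r$, and from it one recovers $\tp_{n,k}(r\cup x)$: the set $r$ is named by the constant $c$, so every $\Lan_{n,k}$-formula about an element of $r \cup x$ translates into an $\Lan_{n,k}^c$-formula about $x$ together with $c$, and membership in $r$ versus $x$ is tracked by $c$ and the definable well-order. Hence items (1)--(3) applied to $r \cup x \in [\lambda_n]^{<\kappa_n}$ (note $\otp(r\cup x) < \kappa_n$ since both $r$ and $x$ have size $<\kappa_n$ and $\kappa_n$ is regular) are all determined by $\tp_{n,k}^r(x)$.

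I expect the only real subtlety to be the bookkeeping in the definability claim --- namely checking that with $k \geq 2$ the $\kappa_n^{+k}$ constants genuinely suffice to name (or uniquely pin down by a formula) all $\kappa_n^+$ functions and $\kappa_n^{++}$ ultrafilters via a $V$-canonical enumeration respecting $<_{\chi^{+k}}$ --- but this is exactly the point already flagged in the paragraph preceding the lemma, so it can be invoked rather than belabored. The passage from ``definable feature'' to ``determined by the type'' is then routine.
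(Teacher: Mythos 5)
Your proposal is correct and follows exactly the route the paper intends: the paper derives the lemma as an ``immediate consequence'' of the preceding observation that under $\GCH$ the $\kappa_n^+$ functions $\kappa_n\to\kappa_n$ and $\kappa_n^{++}$ ultrafilters on $\kappa_n$ are all definable in $\A_{n,k}$ (using the constants $c_\tau$, $\tau<\kappa_n^{+k}$, and the built-in well-order), and your write-up simply spells out that each listed feature is then expressed by $\Lan_{n,k}$-formulas (with elements of $x$ identified by their position in $x$), hence read off from $\tp_{n,k}(x)$, with the relativized case handled by translating formulas about $r\cup x$ into $\Lan_{n,k}^c$-formulas about $x$.
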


	\begin{definition}\label{def-equiv}
		${}$
		\begin{enumerate}
			\item     Fix $n < \omega$ and let $r,r'$ be two sets in $[\lambda_n]^{<\kappa_n}$ for some $n < \omega$.
			We say that $r,r'$ are $k$-equivalent if $\tp_{n,k}(r) = \tp_{n,k}(r')$. 
			\item  Let $p_n = \la a_n,A_n,f_n\ra$ and ${p'}_n = \la a'_n,A'_n,f'_n\ra$ be two $k$-relevant components for some $k < \omega$. We write $p_n \iff_{n,k} {p'}_n$ if and 
			only if $\rng(a_n)$ and $\rng(a_n')$ are $k$-equivalent sets in $[\lambda_n]^{<\kappa_n}$, 
			$A_n = A_n'$, and $f_n = f'_n$.
			\item For every two conditions $p,{p'} \in \po$, we write $p \iff {p'}$ if and only if $\ell^p = \ell^{{p'}}$ and there is a nondecreasing unbounded sequence $\la k_n^* \mid n < \omega\ra$ of integers $k_n^* \geq 2$ such that for ${p'}_n = p_n$  for every $n < \ell^p$, and $p_n \iff_{n,k^*_n} {p'}_n$ for every $n \geq \ell^p$.
		\end{enumerate}
	\end{definition}
	
	It is straightforward to verify that $\iff$ is an equivalence relation. 
	We also note that if $r,r'\in [\lambda_n]^{<\kappa_n}$ are $k$-equivalent then they are $l$-equivalent for every $l < k$. Therefore, if $p_n \iff_{n,k} {p'}_n$ then $p_n \iff_{n,l} {p'}_n$.
	
	\begin{definition}
		Let $p,q$ be two conditions of $\po$. We write $p \rar q$ to mean that $q$ is obtained from $p$ by finitely many  $\geq-$extensions and $\iff$ transitions. 
	\end{definition}
	Therefore if $p \rar q$ then $q$ is stronger (more informative) than $p$. 
	It is clear that every two conditions $p\iff {p'}$ in $\po$ are forcing equivalent in the poset $(\po,\rar)$, and 
	by Lemma \ref{lemma-typequiv}, 
	that they force the exact same statemets about $\vec{t}$. 
	
	The following two results are crucial to the success of the forcing construction. 
	\begin{theorem}[Gitik, see \cite{GitUng-SEF}]\label{thm-SEFchain}
		${}$
		\begin{enumerate}
			\item If $p \iff {p'}$ are two equivalent conditions and $q'$ extends ${p'}$ in $\leq$, then there are conditions $q'' \geq {q'}$ and $p'' \geq {p'}$ such that $p'' \iff q''$. Consequently, for every dense open set $D$ in the poset $(\po,\rar)$ and a condition $p \in \po$ there exists some $p'' \geq p$ in $D$.
			
			\item $(\po,\rar)$ satisfies ${\kappa_\omega^{++}}$.c.c.
		\end{enumerate}
	\end{theorem}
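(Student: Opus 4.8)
The plan is to recall the proof from Gitik's work, leaning on the $k$-good machinery developed above. Part (1) reduces to a single ``zig-zag amalgamation'' step, from which the stated consequence follows by an induction on the length of a $\rar$-chain; part (2) is a $\Delta$-system argument paired with a count of $(n,k)$-types, where the equivalence $\iff$ is used to make the extender indices of two conditions disjoint off a common root.

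For the zig-zag step I would first use the normal form recorded after Definition \ref{def-equiv}: any $q' \geq p'$ is a direct extension of some $p' \fr \vec\nu$. Suppose $p \iff p'$ via $\la k_n^* \mid n < \omega\ra$. For the one-point part, at each relevant coordinate $n$ one has $A_n^p = A_n^{p'}$ and $f_n^p = f_n^{p'}$ by Definition \ref{def-equiv}(2), so $\vec\nu$ is also a legal one-point extension sequence for $p$; and since $\rng(a_n^p)$ and $\rng(a_n^{p'})$ realize the same $(n,k_n^*)$-type, Lemma \ref{lemma-typequiv} guarantees that the projection maps $\pi_{\beta,\alpha}$ attached to corresponding pairs of indices agree, so the $f$-values recorded by the one-point extensions on the two sides coincide, whence $p \fr \vec\nu \iff p' \fr \vec\nu$. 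For the direct-extension part, I would copy the new domain points, the new indices in $\lambda_n$, the enlargements of the $f_n$'s, and the shrinkings of the $A_n$'s over to the $p$-side: the domain points and $f$-entries are copied verbatim, while the new indices are chosen, using that the good ordinals are club in $\lambda_n$ together with Lemma \ref{lem-Observationsgoodness}(2), to be good ordinals realizing the prescribed relative types over $\rng(a_n^{p \fr \vec\nu})$ and $\leq_{E_n}$-maximal as Definition \ref{def-relevant} requires; one then passes to a common direct extension $q'' \geq q'$, intersecting measure-one sets along the relevant projections, so that the resulting $p'' \geq p$ satisfies $p'' \iff q''$. For the consequence, given $D$ dense open in $(\po,\rar)$ and $p \in \po$, pick $p \rar p^*$ with $p^* \in D$ and induct on the length of the witnessing chain: a $\leq$-step is passed through trivially, and an $\iff$-step is handled by the zig-zag together with the fact that $D$, being open in $(\po,\rar)$, is a union of $\iff$-classes and is closed under $\leq$.

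For part (2), let $\{p^\xi \mid \xi < \kappa_\omega^{++}\}$ be given. First thin out: there are countably-many values of $\ell^{p^\xi}$ and at most $2^{\aleph_0} < \kappa_\omega^{++}$ nondecreasing unbounded integer sequences, so assume $\ell^{p^\xi} = \ell$ and $\la k_n^{p^\xi} \mid n\ra = \la k_n \mid n\ra$ are fixed. The union $\bigcup_n \dom(a_n^{p^\xi}) \cup \bigcup_n \dom(f_n^{p^\xi})$ has size $\leq \kappa_\omega$; as $\kappa_\omega^{++}$ is regular and $(\kappa_\omega^+)^{\kappa_\omega} = \kappa_\omega^+ < \kappa_\omega^{++}$ by $\GCH$, the $\Delta$-system lemma lets us assume these sets form a $\Delta$-system with root $R$, $|R| \leq \kappa_\omega$. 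Using $\GCH$ again, there are $\leq \kappa_\omega$ possibilities for the $f$-parts restricted to $R$, and, by the relativized form $\tp^r_{n,k}$ of Lemma \ref{lemma-typequiv}, only $\leq \kappa_n^{++} \leq \kappa_\omega$ possibilities for the $(n,k_n)$-type of $\rng(a_n^{p^\xi})$ over its restriction to $R$, hence $\leq \kappa_\omega^+$ over all $n$; so we may assume all $p^\xi$ agree on $R$ and carry the same relativized type-sequence. Given two of them, $p^\xi$ and $p^\eta$, I would use $\iff$ to replace $p^\eta$ by an equivalent $\tilde p^\eta$ whose extender indices off the root are disjoint from those of $p^\xi$: only $< \kappa_n$ such indices are needed at coordinate $n$, and by the remark following Lemma \ref{lem-Observationsgoodness} every prescribed $(n,k_n)$-type over $\rng(a_n^{p^\xi})$ is realized by good ordinals cofinally below $\lambda_n$. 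Then the union $p^\xi \cup \tilde p^\eta$ --- unioning the $a_n$'s and $f_n$'s, adjoining at each coordinate a $\leq_{E_n}$-maximal good index above both maxima carrying the amalgamated type, and intersecting the pullbacks of the two measure-one sets (nonempty by $\kappa_n$-completeness of $E_n$) --- is a legal condition of $\po$ extending both in $\rar$, witnessing compatibility.

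The step I expect to be the main obstacle is, in part (2), arranging the disjointification and amalgamation so that \emph{all} of the coherence requirements of Definition \ref{def-relevant} (order preservation of the maps $\pi_{\gamma_n,\alpha}$ on the surviving measure-one set, and the factorization $\pi_{\gamma,\alpha} = \pi_{\beta,\alpha}\circ\pi_{\gamma,\beta}$) hold simultaneously at every coordinate for the combined range, while the maximal index remains good and the surviving set remains in the right ultrafilter; this is exactly where the $k$-good analysis (the closure and definability of $\A_{n,l}$ inside $\A_{n,k}$ from Lemma \ref{lem-Observationsgoodness}) is indispensable, and it is the reason that notion was isolated. The same difficulty recurs, and is resolved the same way, in the direct-extension copying step of part (1).
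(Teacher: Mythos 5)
Note first that the paper does not actually prove part (1) --- it is quoted from Gitik (\cite{GitUng-SEF}) --- and for part (2) it only gives a sketch, so the comparison is against that sketch. Your part (2) deviates from it at the crucial amalgamation step, and the deviation is a genuine gap. In the paper's argument one thins the family so that the two conditions $p_\alpha,p_\beta$ ($\alpha<\beta$) have \emph{literally equal} ranges $\rng(a_n^{p_\alpha})=\rng(a_n^{p_\beta})=r_n$ and equal sets $A_n$, with the off-root blocks of the domains stacked ($\dom(a_n^{p_\alpha})\setminus\alpha$ entirely below $\dom(a_n^{p_\beta})\setminus\beta$); the obstruction is that both off-root domain blocks are sent to the same top block $x\subseteq r_n$, and the fix is to move $p_\alpha$'s copy of $x$ \emph{downwards} into the specific gap between $\gamma'=\sup(r_n\setminus x)$ and $\gamma=\min(x)$: since $\gamma$ is $k_n$-good, the relative type of $x$ lies in $M_{n,k_n}(\gamma)$ and Lemma \ref{lem-Observationsgoodness}(2) reflects it to some $x'\subseteq\gamma$ with $\min(x')>\gamma'$. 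You instead propose to replace one condition by an equivalent one whose off-root indices are merely ``disjoint'' from the other's, justified by the claim that every prescribed $(n,k_n)$-type over $\rng(a_n^{p^\xi})$ is realized by good ordinals cofinally below $\lambda_n$. That is not what Lemma \ref{lem-Observationsgoodness}(2) or the remark following it provides (they give only downward reflection below a good ordinal), and it is false in general: a relative type can pin its realizer down uniquely, e.g.\ via the well-ordering $<_{\chi^{+k}}$ (``the least set such that\,\dots''), so it need not be realized again at all, let alone cofinally. Moreover, disjointness is not the right target: since the $a_n$'s must be order preserving, the new indices must be interleaved with the other condition's range exactly according to the $\Delta$-system position of the domains, which is precisely why the paper places the reflected set in the interval $(\gamma',\gamma)$ and accepts that the resulting pair is only $(k_n-1)$-relevant (harmless because $k_n\to\infty$).

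Two secondary points. First, $\iff_{n,k}$ requires $A_n$ and $f_n$ to be kept literally unchanged, so an $\iff$-move may only trade the range; it is the type equality (Lemma \ref{lemma-typequiv}) that guarantees the ultrafilters and projections still fit the old $A_n$, whereas your ``adjoin a $\leq_{E_n}$-maximal good index above both maxima and intersect pullbacks'' belongs to building the common $\leq$-extension, not to the equivalence step. Also your count of relativized types as $\leq\kappa_n^{++}$ is off (types are subsets of $\kappa_n^{+k_n}$); the paper avoids type-thinning altogether by fixing the ranges and the $A_n$'s outright, which GCH permits on a subfamily of full size. Second, your zig-zag sketch for part (1) is of the standard shape, but its direct-extension copying step has the same placement problem: new indices on the $p$-side must be obtained by the downward reflection below a good point, in the correct gaps, not just chosen to realize a type somewhere; as the paper proves nothing for part (1), this portion cannot be checked against the text and should be referred to Gitik's argument.
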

	
	We note that the first statement of Theorem \ref{thm-SEFchain} implies that the identity function forms a forcing projection of $(\po,\leq)$ onto $(\po,\rar)$ and therefore allows us to use the Prikry forcing machinery of $(\po,\leq)$ to analyze $(\po,\rar)$. In particular, $(\po,\rar)$ does not introduce new bounded subsets to $\kappa_{\omega}$ and does not collapse $\kappa_{\omega}^+$. The second statement asserts that $(\po,\rar)$ does not collapse cardinals $\lambda \geq {\kappa_\omega^{++}}$ and allows us to apply the results of Section \ref{section-pre} to the generic scale $\vec{t}$.\\

	We sketch the argument for ${\kappa_\omega^{++}}$.c.c to justify Definitions \ref{def-relevant}, \ref{def-goodpoints}, and the use of $k$-good ordinals. 
	Suppose that $\{p_\alpha \mid \alpha < {\kappa_\omega^{++}}\}$ is a family of conditions of $\po$. By applying standard $\Delta$-system and pressing down arguments, it is possible to find a subfamily of the same size such that for every two conditions in the subfamily, $p_\alpha,p_\beta$ with $\alpha < \beta$, they agree on $\ell^{p_\alpha} = \ell^{p_\beta} = \ell$, and  the following hold for each $n< \omega$:
	\begin{enumerate}
		\item $f^{p_\alpha}_n$ and $f^{p_\beta}_n$ are compatible functions (i.e., they agree on the values of common domain ordinals);
		\item $A_n^{p_\alpha} = A_n^{p_\beta} = A_n$ and $\rng(a_n^{p_\alpha}) = \rng(a_n^{p_\beta}) = r_n$ for all $n \geq \ell$; 
		\item $\dom(a_n^{p_\alpha}) \cap \alpha = \dom(a_n^{p_\beta}) \cap \beta = d_n$ for some $d_n \in [{\kappa_\omega^{++}}]^{<\kappa_n}$; 
		\item $\dom(a_n^{p_\alpha})\setminus \alpha \subseteq \beta$; and 
		\item $k_n^{p_\alpha} = k_n^{p_\beta} = k_n$.
	\end{enumerate} 
	The only obstruction to $p_\alpha$ and $p_\beta$ having a common extension is that the disjoint sets $\dom(a_n^{p_\alpha}) \setminus \alpha$ 
	and $\dom(a_n^{p_\beta}) \setminus \beta$ are mapped by the order preserving functions $a_n^{p_\alpha}$ and $a_n^{p_\beta}$, respectively, to the same 
	ordinals in $r_n$.
	This makes it impossible for $a_n^{p_\alpha} \cup a_n^{p_\beta}$ to be order preserving.
	To circumvent this, we use the equivalence relation $\iff$ to replace $p_\alpha$ with an equivalent $p_\alpha'$ so that $a_n^{p_\alpha'}$ is compatible with $a_n^{p_\beta}$. Let $x = a_n^{p_\alpha}``(\dom(a_n^{p_\alpha}) \setminus \alpha) = a_n^{p_\beta}``(\dom(a_n^{p_\beta}) \setminus \beta)$, $\gamma = \min(x)$ and $\gamma' = \sup(r_n \setminus x)$. Recall that since $\gamma$ is $k_n$-good there is a substructure $M_{n,k_n}(\gamma) \elem \A_{n,k_n}$ such that $M_{n,k_n}(\gamma) \cap \lambda_n = \gamma$. The language $\Lan_{n,k}$ includes a constant for each $\tau < \kappa_{n}^{+k_n}$. Therefore $M_{n,k_n}(\gamma)$ contains all $(n,k_{n-1})$-types and in particular the relative type $t = t^{r_n}_{n,k_n-1}(x)$. By Lemma \ref{lem-Observationsgoodness} there exists a set of ordinals $x' \subseteq \gamma \setminus (\gamma'+1)$ which realizes the same type $t$. This, and Lemma \ref{lemma-typequiv} in turn, imply that $x'$ consists of $k_{n-1}$-ordinals and that 
	$\tp_{n,k_n-1}(r_n \cup x') = \tp_{n,k_n-1}(r_n \cup x)$.  Let $a_n'$ be the partial and order preserving function obtained from $a_n^{p_\alpha}$ by replacing 
	the range $r_n \cup x$ with $r_n \cup x'$. By our choice of $x'$ we have that  $(a_n', A_n^{p_\alpha})$ is $(k_{n}-1)$-relevant.
	If $p' = \la p'_n \mid n < \omega\ra$ is the sequence obtained from $p_\alpha$ by defining $a_n^{p'} = a_n'$ and $A_n^{p'} = A_n^{p_\alpha}$,  then $p'$ is a condition in $\po$ 
	which is $\iff$ equivalent to $p_\alpha$. Finally, it is clear from the construction that $a_n^{p'} \cup a_n^{p_\beta}$ is order preserving and $(k_{n}-1)$-relevant. We conclude that $p_\beta$ and $p' \iff p_\alpha$ are compatible in $\leq$ and thus $p_\beta$ and $p_\alpha$ are compatible in $\rar$.

	
	\subsection{Proof of Theorem \ref{thm1}}
	The last argument justifies the restriction in the definition of conditions in $\po$ to $k$-good ordinals. This restriction is mild since the set of $n$-good ordinals is closed unbounded in  $\kappa_n$, which leaves plenty of room to choose extender indices from  $E_n$ to construct the generic scale $\vec{t}$. 
	Our situation requires more caution, as would like to control the extender indices $\gamma \in \rng(a_n^p)$
	to the level where we can guarantee that $\gamma \in j_n(S_n)$ for a prescribed stationary subset $S_n$ of $\kappa_n$. By the elementarity of $j_n$, it is clear that the set $T_n = j_n(S_n)$ is stationary in the codomain of $j_n$. 
	However, $T_n$ need not be stationary in $V$ and thus might not contain good ordinals.
	It is for this reason that we require that $j_n$ possess a stronger (large cardinal) property than the one presented by $E_n$. For example, while requiring that each $j_n$ and $E_n$ are superstrong suffices for obtaining a generic scale on $\prod_n \kappa_n$, we will further assume each $j_n$ is $(+1)$-extendible; a property which is not reflected in its derived extender $E_n$. 
	We proceed to the proof of Theorem \ref{thm1}. 
	
	Suppose that $\la \kappa_n \mid n < \omega\ra$ is an increasing sequence of $(+1)$-extendible cardinals in a model $V$ of $\GCH$. For each $n < \omega$, let $j_n : V_{\kappa_n+1} \to V_{\lambda_n+1}$ be a $(+1)$-extendible embedding (i.e., $\lambda_n = j_n(\kappa_n)$) and $E_n$ be the $(\kappa_n,\lambda_n)$ extender derived from $j_n$. 
	Denote ${\kappa_\omega^{++}}$ by $\lambda$. By Theorem \ref{thm-ShelahIA}, for every regular uncountable cardinal $\mu < \kappa_{\omega}$ there exists a sequence $\vec{a}^\mu = \la a^\mu_\alpha \mid \alpha < \lambda\ra$ of bounded subsets of $\lambda$ such that $S_{\vec{a}^\mu}^V \cap \cof(\mu)$ is stationary in $\lambda$.
	We force over $V$ with the short extenders poset $(\po,\rar)$ defined by the extenders $\la E_n \mid n < \omega\ra$.
	By Theorem \ref{thm-SEFchain}, $(\po,\rar)$ satisfies $\lambda$.c.c and therefore $S(\vec{a}^\mu)^V \cap \cof(\mu)$ remains stationary in $\lambda$ for all regular uncountable $\mu <\kappa_{\omega}$.
	
	\begin{remark}
		It is clear from Defintion \ref{def-IApoints}, that if $\gamma$ is an approachable ordinal with respect to $\vec{a}^\mu$ in $V$, then it is such in every generic extension $V[G]$.
		On its face, $V[G]$ can contain new ordinals which are approachable with respect to a sequence $\vec{a}^\mu$, however, using Lemma \ref{Lem-meetdense}, it is possible to show that $S_{\vec{a}}^V = S_{\vec{a}}^{V[G]}$. 
		The last fact will not be used in the proof of Theorem \ref{thm1} below, which only requires that the set $S_{\vec{a}}^V \cap \cof(\mu)$ is stationary in $V[G]$ and contains ordinals which are approachable with respect to $\vec{a}^\mu$. 
	\end{remark}
	
	By Lemma \ref{lem-genericscaleproduct}, $G$ introduces a scale $\vec{t} = \la t_\alpha \mid \alpha \in \AP_G\ra$ in the product $\prod_n \kappa_n$.
	Fix a regular uncountable cardinal $\mu < \kappa_\omega$, and suppose that $m < \omega$ is the first integer such that $\mu < \kappa_m$, and
	$\vec{S} = \la S_n \mid m \leq n < \omega\ra$ is a sequence of stationary sets $S_n \subseteq \kappa_n \cap \cof(\mu)$, in $V$.
	We claim that $\vec{S}$ is tightly stationary in $V[G]$. It is sufficient to show that for every algebra $\A$ which expands $\la H_\theta^{V[G]}, \in ,<_\theta, \vec{t}, \vec{a}^\mu\ra$ there is a tight substructure $M \elem \A$ so that $\sup(M \cap \kappa_n) \in S_n$ for almost all $n < \omega$. 
	Moreover, Proposition \ref{prop-IAmain} guarantees that in $V[G]$, for every algebra $\A$ which expands  $\la H_\theta, \in,<_\theta,\vec{t},\vec{a}^\mu\ra$ for some regular cardinal $\theta > \lambda$ there is a closed unbounded set $C \subseteq \lambda$ with the property that for every $\delta \in S_{\vec{a}}^{V[G]} \cap C$, 
	if $\delta$ is a continuity point of $\vec{t}$ then there is a tight substructure $M \elem \A$ such that $\sup(M \cap \kappa_n) = t_\delta(n)$ for almost all $n < \omega$. 
	It is therefore sufficient to verify that $\vec{t}$ satisfies the following property.
	\begin{proposition}\label{proposition-mainthm1}
		For every closed unbounded subset $C \subseteq \lambda$ there exists an ordinal $\delta \in C \cap S_{\vec{a}} \cap \cof(\mu)$ which is a continuity point of $\vec{t}$ and 
		$t_\delta(n) \in S_n$ for almost all $n < \omega$.
	\end{proposition}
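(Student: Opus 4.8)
The plan is to prove the statement by a density argument in $(\po,\rar)$, carried out through the Prikry machinery of $(\po,\leq)$ (which projects onto $(\po,\rar)$ by Theorem \ref{thm-SEFchain}). Fix a $\po$-name $\name C$ for a closed unbounded subset of $\lambda$ and a condition $p^*\in\po$; since every club of $V[G]$ is realized by such a name, it suffices to produce a condition $q\geq p^*$ and an ordinal $\delta<\lambda$ such that $q$ forces ``$\check\delta\in\name C\cap S_{\vec a}\cap\cof(\mu)\cap\AP_G$, $\check\delta$ is a continuity point of $\vec t$, and $t_{\check\delta}(n)\in S_n$ for all but finitely many $n$''. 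We may assume $\ell^{p^*}\geq m$, so that $\mu<\kappa_{\ell^{p^*}}$ and, by Lemma \ref{Lem-PObasicproperties}, the direct extension order of $\po/p^*$ is $\mu^+$-closed; this leaves room for a $\leq^*$-recursion of length $\mu$.

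First I choose $\delta$. Working in $V$, fix a regular $\Theta\gg\lambda$ and, using Theorem \ref{thm-IAmodels} together with the stationarity (Theorem \ref{thm-ShelahIA}) of $S_{\vec a^\mu}\cap\cof(\mu)$ in $\lambda$, an internally approachable chain $\la N_i\mid i<\mu\ra$ of elementary substructures of $(H_\Theta^V,\in,<_\Theta)$, each of size $<\mu$, all containing $p^*,\name C,\vec a^\mu,\vec\kappa,\la E_n,j_n\mid n<\omega\ra$ and $\po$, whose union $N=\bigcup_{i<\mu}N_i$ satisfies $\delta:=\sup(N\cap\lambda)\in S_{\vec a^\mu}^V$ and $\cf(\delta)=\mu$. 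Approachability with respect to $\vec a^\mu$ is upward absolute, so $\delta$ will still lie in $S_{\vec a^\mu}\cap\cof(\mu)$ in $V[G]$; and since $(\po,\rar)$ has the $\lambda$-c.c., a name in $N$ for an ordinal $<\lambda$ has fewer than $\lambda$ possible values, whose supremum lies in $N\cap\lambda$, whence $\sup(N[G]\cap\lambda)=\delta$.

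Next I build $q$, and here the $(+1)$-extendibility enters. Since $\lambda_n=j_n(\kappa_n)$ and the codomain of $j_n$ is $V_{\lambda_n+1}$, every club of $\lambda_n$ belongs to that codomain, so the stationarity of $S_n$ in $\kappa_n$ reflects through $j_n$ to genuine stationarity of $T_n:=j_n(S_n)$ in $\lambda_n$ in $V$; moreover $T_n\subseteq\lambda_n\cap\cof(\mu)$ because $j_n(\mu)=\mu$. As the good ordinals are club in $\lambda_n$, the set of good ordinals in $T_n$ of cofinality $\mu$ is stationary, hence unbounded, in $\lambda_n$. I now perform a $\leq^*$-increasing recursion of length $\mu$ staying inside $N$ (the construction is internally definable there): at stage $i$ I extend the current condition so that the $i$-th element $\delta_i$ of an increasing enumeration of $N\cap\lambda$ enters each $\dom(a_n)$ with a good index that is $\leq_{E_n}$-below a reserved good ordinal $\gamma_n^\delta\in T_n\cap\cof(\mu)$ and cofinal in $\gamma_n^\delta$ — possible, coherently with the requirements of Definition \ref{def-relevant}, because goodness of $\gamma_n^\delta$ provides cofinally many suitable $\leq_{E_n}$-predecessors and the finitely many per-step commutativity and monotonicity demands on the measure-one sets cost only $<\kappa_n$ conditions, surviving intersection by $\kappa_n$-completeness of $E_n$; and I interleave a diagonalization against $\name C$: using Lemma \ref{Lem-meetdense} inside $N$, I pass to a direct extension and an ordinal $\beta_i\in N\cap\lambda$ forcing $\min(\name C\setminus\delta_i)\leq\beta_i$. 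Limit stages use $\mu^+$-closure inside $N$. Letting $q'$ be a $\leq^*$-upper bound of the chain, I finally form $q\geq^* q'$ by adjoining $\delta$ to every $\dom(a_n)$, $n\geq\ell^{p^*}$, with index $a_n^q(\delta)=\gamma_n^\delta$ (now the $\leq_{E_n}$-maximum) and by shrinking $A_n^q$ to a measure-one subset of $S_n$, legitimate because $\gamma_n^\delta\in T_n=j_n(S_n)$ gives $S_n\in E_n(\gamma_n^\delta)$, while preserving the relevance conditions. Since $\delta$ carries the maximal index, a one-point extension at level $n$ makes $t_\delta(n)$ equal to the generic one-point ordinal, which lies in $S_n$; thus $q$ forces $t_\delta(n)\in S_n$ for all $n\geq\ell^{p^*}$ and forces $\delta\in\AP_G$. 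The interleaved diagonalization, together with $\sup_i\delta_i=\sup_i\beta_i=\delta$ and closure of $\name C$, yields $q\force\check\delta\in\name C$.

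It remains to see that $q$ forces $\delta$ to be a continuity point of $\vec t$, and this is the \emph{main obstacle}. Let $\name M$ name $N[G]\cap H_\theta^{V[G]}$ for a suitable regular $\theta>\lambda$; since $\po\in N$, this is forced to be an internally approachable elementary substructure of length $\mu$ with $\vec t\in\name M$ and $\sup(\name M\cap\lambda)=\delta$, hence tight, so by the observations preceding Proposition \ref{prop-IAmain} the characteristic function $\chi_{\name M}^{\vec\kappa}$ is an exact upper bound of $\vec t\uhr\delta$; as exact upper bounds are unique modulo finite, it suffices to force $t_\delta(n)=\sup(\name M\cap\kappa_n)$ for almost all $n$. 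For $\alpha\in N\cap\lambda$ one has $t_\alpha(n)=\pi_{\gamma_n^\delta,a_n^q(\alpha)}(t_\delta(n))<t_\delta(n)$, so $\sup\{t_\alpha(n):\alpha\in N\cap\lambda\}\leq t_\delta(n)$; the purpose of choosing the indices $a_n^q(\alpha)$ cofinal in $\gamma_n^\delta$ (and $\leq_{E_n}$-below it) is that the corresponding projections of $t_\delta(n)$ are then cofinal in $t_\delta(n)$ and the functions $\la t_\alpha:\alpha\in N\cap\lambda\ra$ are $<^*$-cofinal in $\prod_n t_\delta(n)$; combined with the fact that $\po$ adds no new bounded subsets of $\kappa_\omega$, so that $\sup(\name M\cap\kappa_n)$ is computed from $N\cap\lambda$, this forces $t_\delta(n)=\sup(\name M\cap\kappa_n)$ for almost all $n$, making $\delta$ a continuity point. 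Verifying this last equality rigorously — that the generic value $t_\delta(n)$ really is the supremum of $\name M\cap\kappa_n$ and is not strictly larger — is where the extender-theoretic bookkeeping of the short-extenders construction (the $\kappa_n$-directedness of $\leq_{E_n}$, the control of projections on measure-one sets afforded by good ordinals, and the scale analysis of \cite{Gitik-EBF1}) must be invoked, and I expect it to be the hard step.
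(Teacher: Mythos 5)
Your overall strategy is the paper's: a density argument in which $(+1)$-extendibility is used so that $T_n=j_n(S_n)$ is genuinely stationary in $V$ (hence contains good ordinals of cofinality $\mu$), the approachable ordinal $\delta$ together with a cofinal sequence is inserted into the domains of the $a_n$'s with indices tied to a good ordinal in $T_n$, and the measure-one sets are shrunk so that the generic value lands in $S_n$. But there is a genuine gap, and you flag it yourself: you never actually force $\delta$ to be a continuity point. Uniqueness of exact upper bounds only tells you that \emph{if} $t_\delta$ is an eub of $\vec{t}\uhr\delta$ then it agrees almost everywhere with $\chi_M^{\vec{\kappa}}$; the whole issue is to show that $t_\delta$ \emph{is} such a bound, i.e.\ that $t_\delta(n)$ is not strictly above $\sup\{t_\alpha(n):\alpha\in N\cap\lambda\}$ for almost all $n$, and choosing the indices $a_n^q(\delta_i)$ merely ordinal-cofinal in $\gamma_n^\delta$ does not give this: for an arbitrary $\nu$ in your measure-one set the projections $\pi_{\gamma_n^\delta,a_n^q(\delta_i)}(\nu)$ need not be cofinal (or continuous) in $\nu$. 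The paper closes exactly this gap by an extra measure-one requirement that you omit: pick $\delta_n\in T_n$ good of cofinality $\mu$ which is the limit of an increasing \emph{continuous} sequence $\la\delta_n(i)\mid i<\mu\ra$ of good ordinals, set $a_n(\delta(i))=\delta_n(i)$, $a_n(\delta)=\delta_n$, add a separate top point $\rho>\delta$ with index $\rho_n$ a good $\leq_{E_n}$-upper bound of the whole range, and shrink $A_n$ to the $\nu$ satisfying both $\pi_{\rho_n,\delta_n}(\nu)\in S_n$ and ``$\la\pi_{\rho_n,\delta_n(i)}(\nu)\mid i<\mu\ra$ is increasing, continuous and cofinal in $\pi_{\rho_n,\delta_n}(\nu)$''. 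The second set lies in $E_n(\rho_n)$ because applying $j_n$ (which fixes $\mu<\kappa_n$) turns it into the true statement about $\la\delta_n(i)\ra$ and $\delta_n$; with it, every extension forces $t_\delta(n)=\sup_{i<\mu}t_{\delta(i)}(n)$, so $t_\delta$ is outright an eub of $\vec{t}\uhr d$, hence of $\vec{t}\uhr\delta$, with no appeal to tightness of $N[G]$ or to the scale analysis you defer to.

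Two further points. First, your plan to make $\gamma_n^\delta\in T_n$ the $\leq_{E_n}$-maximum of $\rng(a_n^q)$, with all other indices chosen $\leq_{E_n}$-below it, is itself unjustified: $T_n$ is only stationary, and there is no reason a member of $T_n$ should be a $\leq_{E_n}$-upper bound of the previously used indices, nor that prescribed ordinal intervals contain $\leq_{E_n}$-predecessors of it. The paper sidesteps this by keeping $\delta_n$ a non-maximal index and routing both requirements through projections from the separate top index $\rho_n$ (whose existence only needs $\kappa_n$-directedness plus goodness). Second, your handling of the club via a name $\name{C}$, an internally approachable chain $\la N_i\mid i<\mu\ra$ in $V$, and a length-$\mu$ diagonalization is workable but unnecessary and partly off (an increasing enumeration of $N\cap\lambda$ need not have length $\mu$): since $(\po,\rar)$ is ${\kappa_\omega^{++}}$-c.c., every club of $\lambda$ in $V[G]$ contains a ground-model club, so one fixes $C\in V$, picks $\delta\in S_{\vec{a}}\cap C\cap\cof(\mu)$ above the sup of the relevant domains together with a continuous cofinal $d\subseteq\delta$ of order type $\mu$, and builds a single direct extension; the tight structures themselves are supplied afterwards in $V[G]$ by Proposition \ref{prop-IAmain}.
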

	\noindent\emph{proof (Proposition \ref{proposition-mainthm1}).}\\
	Since $(\po,\rar)$ satisfies $\lambda$.c.c, every closed unbounded subset of $\lambda$ in $V[G]$ contains a closed unbounded set in $V$. 
	It is therefore sufficient to provide a density argument and show that for every closed unbounded set $C \subseteq \lambda$ in $V$ and a condition $p \in \po$,
	there are $\delta \in S_{\vec{a}} \cap C \cap \cof(\mu)$ and an extension $p^*$ of $p$ which forces that $\delta$ is a continuity point of $\vec{t}$ and that $t_\delta(n) \in S_n$ for all $n \geq \max(m,\ell^p)$.
	To this end, fix a condition $p \in \po$ and a club $C \subseteq \lambda$. We may assume that $\ell^p \geq m$. Let $a = \bigcup_n (\dom(a_n^p) \cup \dom(f_n^p)) \in [\lambda]^{\kappa_\omega}$. We can pick some $\delta \in S_{\vec{a}} \cap C \cap \cof(\mu)$ which is strictly above $\sup(a)$, and a continuous, increasing, and cofinal sequence $d  = \la \delta(i) \mid i < \cf(\delta)\ra$ in $\delta \setminus (\sup(a)+1)$. 
	For each $n \geq \ell^p$ let $T_n = j_n(S_n) \subseteq \lambda_n$. By the elementarity of $j_n$, $T_n$ is a stationary subset of $\lambda_n$ in $V_{\lambda_n+1}$, and thus also stationary in $V$. Furthermore, the fact $\mu < \kappa_n$ implies that $T_n \subseteq \cof(\mu)$. It follows that $T_n$ contains an $n$-good ordinal $\delta_n > \sup(\rng(a_n^p))$ of cofinality $\mu$ which is also a limit of an increasing continuous sequence of $n$-good ordinals, $d_n = \la \delta(i)_n \mid i < \mu\ra$.
	Extend the partial function $a_n^p$ to a function $a_n'$ which is defined by $a_n' = a_n^p \cup \{ \la \delta,\delta_n\ra\} \cup \{\la \delta(i),\delta_n(i)\ra \mid i < \mu\}$. 
	Next, we choose an ordinal $\rho \in \lambda \setminus (\delta+1)$, and for each $n \geq \ell^p$, pick an $n$-good ordinal $\rho_n > \delta_n$ which is an $\leq_{E_n}$-upper bound for $\rng(a_n')$ (recall that the order $\leq_{E_n}$ is $\kappa_n$-directed).
	Define $a_n^* = a_n' \cup \{ \la \rho,\rho_n\ra\}$, and let $A_n^* \subseteq \pi^{-1}_{\rho_n,\max(\rng(a^p_n))}(A_n^p)$ be the set of all ordinals $\nu$ which satisfy the following two conditions:
	\begin{enumerate}
		\item $\pi_{\rho_n,\delta_n}(\nu) \in S_n$; and
		\item $\la \pi_{\rho_n,\delta_n(i)}(\nu) \mid i < \mu\ra$ is increasing, continuous, and confinal in $\pi_{\rho_n,\delta_n}(\nu)$.
	\end{enumerate}
	Finally, let $p^* = p^* = \la p_n^* \mid n < \omega\ra$ be defined by 
	\[ 
	p_n^* = 
	\begin{cases}
	p_n &\mbox{ if }  n < \ell^p \\
	\la a_n^*,A_n^*,f_n^p\ra &\mbox{ if }  n \geq \ell^p 
	\end{cases}
	\]
	It is straightforward to verify $p^*$ is a direct extension of $p$ in $\po$, and that 
	\[p^* \force \name{t_\delta} \text{ is an eub of } \name{\vec{t}}\uhr d \text{ and } \name{t_\delta(n)} \in \can{S_n} \text{ for all } n \geq \ell^p  \]
	The fact $d= \la \delta(i) \mid i < \cf(\delta)\ra$ is cofinal in $\delta$ implies that $\vec{t}\uhr\delta$ is cofinally interleaved with $\vec{t}\uhr d$. Hence $p^*$ forces that $\name{t_\delta}$ is an eub of $\vec{t}\uhr\delta$, and thus that $\delta$ is a continuity point of $\vec{t}$. 
	\qed{Proposition \ref{proposition-mainthm1}}\\
	\qed{Theorem \ref{thm1}}

	\section{Down to $\aleph_\omega$}\label{section-thm2}
	In this section we prove Theorem \ref{thm2} which is similar to Theorem \ref{thm1} with two major differences. 
	\begin{enumerate}
		\item The sequence of regular cardinals to which the result applies is $\la \omega_{s_n} \mid n < \omega\ra$ for some subsequence $\la s_n \mid n < \omega\raggedbottom$ of $\omega$. 
		\item This sequence is Prikry geneneric over a ground model and therfore does not exist in the core model or the mental.
	\end{enumerate}

	The last property allows us to reduce the large cardinal assumption from the level of extendibility to the hypermeasurability assumption of an increasing sequence $\la \kappa_n \mid n < \omega\ra$ such that each $\kappa_n$ is $\kappa_n^{+n+3}$ strong. Fix for each $n$ a $\kappa_n^{+n+3}$-strong emedding $j_n : V_{\kappa_n+1} \to N_n$ with critical point $\kappa_n$, and let $E_n$ be the $(\kappa_n,\kappa_n^{+n+2})$-extender derived from $j_n$. The gap between the strength of the extender $E_n$ (which is $\kappa_n^{+n+2}$) to the strength of the embedding $j_n$ ($\kappa_n^{+n+3}$) is analogous to the gap between the superstrong extenders $E_n$ and the $(+1)$-extendible embeddings $j_n$ in the proof of Theorem \ref{thm1}. It will be used to insure a name of a stationary subset $T_n$ of $\kappa_n^{+n+2}$ in $N_n$, is also a name for a stationary set in $V$, and thus must contain many good points $\delta < \kappa_n^{+n+2}$ in the sense of Definition \ref{def-goodpoints}.
	
	To prove Theorem \ref{thm2} we will modify the short extenders forcing $\po$ from the previous section. 
	A key feature of the revised version of $\po$ is that it subsumes a ``vanilla`` diagonal Prikry forcing with interleaved collapses, which will be denoted here by $\bar{\po}$.
	The forcing $\bar{\po}$ introduces a single Prikry sequece $\vec{\rho} = \la \rho_n \mid n < \omega\ra$ which is associated with the sequence of normal measures $\la E_n(\kappa_n) \mid n < \omega\ra$. Besides adding the diagonal Prikry sequence $\vec{\rho}$, the poset $\bar{\po}$ incorporates Levy posets which further collapse the cardinals in the intervals $(\rho_n^{+n+3},\kappa_n)$ and $(\kappa_n^{+n+3},\rho_{n+1})$ for every $n$. 
	Therefore, in a $\bar{\po}$ generic extension $V[\bar{G}]$, the sequence of cardinals $\la \rho_n^{+n+2} \mid n < \omega\ra$ forms a subsequence $\la \omega_{s_n} \mid n < \omega\ra$ of the $\omega_n$s.
	$V[\bar{G}]$ will be the ground model that is specified in the statement of Theorem \ref{thm2}. We will argue that every fixed-cofinality sequence $\vec{S} = \la S_n \mid n < \omega\ra$ of stationary sets $S_n \subseteq \rho_n^{n+2}$ is tightly stationary in a further forcing extension over $V[\bar{G}]$. This will be done by proving that $\vec{S}$ is tight in the (full) $\po$ generic extension $V[G]$ which can be seen as a $\po/\bar{\po}$ forcing extension of $V[\bar{G}]$. 
	
	Let us explain why this description dictates an additional revision of $\po$ (besides adding an inverleaved collapse posets). A standard analysis of Prikry type forcings shows that $\bar{\po}$ satisfies a version of Lemma \ref{Lem-meetdense} which implies that if $\name{S_n}$ is a $\bar{\po}$-name of a subset of $\rho_n^{+n+2}$ then for every condition $q \in \bar{\po}$ there exists a direct extension $p^*$ such that every choice of the first $(n+1)$ generic Prikry points $\vec{\rho}_{n+1} = \la \rho_0,\dots,\rho_n\ra$ reduces $\name{S_n}$ to a name $\bar{S}_n(\vec{\rho}_{n+1})$ which depends only on the collapse product of cardinals below $\rho_n$. Obtaining this substitution of names brings us sufficiently close to the assumptions of Theorem \ref{thm1} and allows us to apply a similar argument, and show that there are sufficiently many good IA ordinals $\delta < {\kappa_\omega^{++}}$ that can be generically map to some $n$-good ordinal $\delta_n < \kappa_n^{+n+2}$, such that $\delta_n$ is forced to belong to the stationary name $T_n(\vec{\rho}_{n+1}) = j_n({\bar{S}_n}(\vec{\rho}_{n+1}))$ by some suitable collapse conditions.
	The caveat in this description is that the choice of $\delta_n$ assumes the knowledge of the first diagonal Prikry points $\vec{\rho}_{n+1}$.
	To circumvent this issue, we modify the construction of $\po$ by requiring that in conditions $p \in \po$, the extender indicies maps $a_n = a_n^p$ depend
	on the preceeding diagonal Prikry points $\vec{\rho}_{n} = \la \rho_0,\dots,\rho_{n-1}\ra$ below $\kappa_{n-1}$ \footnote{we will be able to avoide knowing the value of the next point $\rho_n$ by some standard integration manipulation.}.
	Namely, $a_n$ will be a function which maps every potential Prikry initial segment $\vec{\rho}_n$ to a partiral function, $a_n^{\vec{\rho}_n} : {\kappa_\omega^{++}} \to \lambda_n$, with similar properties to the functions $a_n$ which were used in the previous section.
	Accordingly, we will also make the measure one set component $A_n = A_n^p$ to depend on the same information. Therefore $A_n$ will be a function which will map every relevant $\vec{\rho}_n$ to a set $A_n^{\vec{\rho}_n} \in E_n(\max(\rng(a_n^{\vec{\rho}_n}))$. 
	
	We proceed to define $\bar{\po}$ and $\po$.
	\subsection{The poset $\bar{\po}$}
	Suppose that $V$ is a model which contains an increasing sequence $\vec{\kappa} = \la \kappa_n \mid n < \omega\ra$ of cardinals so that each $\kappa_n$ is $\kappa_{n}^{+n+3}$ strong. 
	For each $n < \omega$ we fix a $\kappa_{n}^{+n+3}$-strong embedding $j_n : V_{\kappa_n+1} \to N_n$ and let $E_n$ be the $(\kappa_n,\kappa_n^{n+2})$-extender derived from $j_n$. We denote $\kappa_n^{+n+2}$ by $\lambda_n$. For notational simplicity, we define $\kappa_{-1} = \omega_1$.
	
	The poset $\bar{\po}$ is a diagonal Prikry forcing with interleaved Levy collapse posets.
	Conditions $\bar{p} \in \bar{\po}$ are of the form $\bar{p} = \la \bar{p}_n \mid n < \omega\ra$ and satisfy the following consitions:
	\begin{enumerate}
		\item There exists some $\ell < \omega$ such that $\bar{p}_n = \la \rho_n, g_n,h_n\ra$ for every $n < \ell$, where
		$\rho_n \in (\kappa_{n-1},\kappa_n)$, $g_n \in \col(\kappa_{n-1}^{+(n-1)+3}, <\rho_n)$, and $h_n \in \col(\rho_n^{+n+3},<\kappa_n)$. 
		\item For every $n \geq \ell$, $p_n = \la \np{A_n}, g_n,H_n\ra$, where $g_n \in \col(\kappa_{n-1}^{+(n-1)+3},<\kappa_n)$, 
		$\np{A_n} \in E_n(\kappa_n)$ consist of regular cardinals $\rho$ such that $g_n \in \col(\kappa_{n-1}^{+(n-1)+3},<\rho)$, 
		and $H_n$ is a function with $\dom(H_n) = \np{A}_n$ and $H_n(\rho) \in \col(\rho^{+n+3},<\kappa_n)$ for each $\rho$ in its domain. 
	\end{enumerate}
	A usual, we denote $\ell,g_n,h_n, \np{A_n}, H_n, \rho_n$ by $\ell^{\bar{p}},g_n^{\bar{p}},h_n^{\bar{p}}, \np{A_n}^{\bar{p}}, H_n^{\bar{p}}, \rho_n^{\bar{p}}$, respectively.
	Furthermore, we denote the sequence $\la \rho_0^p,\dots, \rho_{\ell^p-1}^{\bar{p}}\ra$ by $\vec{\rho}_{\bar{p}}$.\\
	
	A condition $\bar{q} \in \bar{\po}$ is a direct extension of $\bar{p}$ (denoted $\bar{q} \geq^* \bar{p}$) if the following conditions hold:
	\begin{itemize}
		\item 
		$\ell^{\bar{q}} = \ell^{\bar{p}}$;
		
		\item for every $n < \ell^{\bar{p}}$, $g_n^{\bar{q}} \geq g_n^{\bar{p}}$ and $h_n^{\bar{q}} \geq h_n^{\bar{p}}$; 
	
		\item for every $n \geq \ell^{\bar{p}}$, 
		$\np{A_n}^{\bar{q}}\subseteq \np{A_n}^{\bar{p}}$, $g_n^{\bar{q}} \geq g_n^{\bar{p}}$, and $H_n^{\bar{q}}(\rho) \geq H_n^{\bar{p}}(\rho)$ for every $\rho \in A_n^{\bar{q}}$.
	\end{itemize}
	
	A condition $\bar{q}$ is a one-point extension of $\bar{p}$ if $\ell^{\bar{q}} = \ell^{\bar{p}} + 1$, $\bar{p}_n = \bar{q}_n$ for every $n \neq \ell^{\bar{p}}$, 
	and $\bar{q}_{\ell^{\bar{p}}} = \la \rho, g_{\ell^{\bar{p}}}^{\bar{p}}, H_{\ell^{\bar{p}}}^{\bar{p}}(\rho)\ra$ for some $\rho \in \np{A}_{\ell^{\bar{p}}}^{\bar{p}}$. We denote $\bar{q}$ by $\bar{p} \fr \la \rho\ra$. 
	Similarly, for a sequence of ordinals $\vec{\rho} = \la \rho_{\ell^{\bar{p}}}, \rho_{\ell^{\bar{p}}+1}, \dots , \rho_{m-1} \ra \in \prod_{\ell^{\bar{p}}\leq k < m} \np{A_k}^{\bar{p}}$, we define $\bp \fr \vec{\rho}$ to be the condition obtained by taking $m - \ell^{\bar{p}}$ consequtive one-point extensions by the ordinals in $\vec{\rho}$. 
	The ordering $\leq$ of $\bar{\po}$ is defined by setting $\bar{q} \geq \bar{p}$ if and only if $\bar{q}$ is obtained from $\bar{p}$ by finitely many one-point extensions and  direct extensions and one-point extensions. Equivalently, $\bar{q}$ is a direct extension of $\bar{p} \fr \vec{\rho}$ for some finite sequence $\vec{\rho} \prod_{\ell^{\bar{p}}\leq k < m} \np{A_k}^{\bar{p}}$ for some $m \geq \ell^{\bar{p}}$. 
	
	The following notational conventions and terminology will be useful for our treatment of $\bar{\po}$ and the revised extenders-based poset $\po$.
	\begin{enumerate}
		\item For every $\bp \in \bP$ and $n \geq \ell^{\bp}$, we define $\np{A}_{\bp\uhr n} = 
		\la \rho_0^{\bp},\dots,\rho^{\bp}_{\ell^{\bp}-1} \ra \times \prod_{\ell^{bp} \leq k < n}A^{\bp}_k$
		\item For every $\vec{\rho}_{n+1} = \la \rho_0,\dots,\rho_{n}\ra \in \np{A}_{\bp\uhr (n+1)}$, we define $\qo(\vec{\rho}_{n+1})$ to be the product of the Levy collapse posets which are determined by the sequence $\vec{\rho}_{n+1}$, namely, 
\[\col(\kappa_{-1}^{+2}, <\rho_0) \times \col(\rho_0^{+3},<\kappa_0) \times \dots  
 \times \col(\kappa_{n-1}^{+(n-1)+3}, <\rho_{n}) \times \col(\rho_{n}^{+n+3},<\kappa_{n}).\]
		
		Therefore, conditions in $\qo(\vec{\rho}_{n+1})$ are sequences of Levy collapse functions, of the form $\la g_0,h_0,\dots,g_n,h_n\ra$, where 
		for each $i$, $g_i \in \col(\kappa_{i-1}^{+i+2}, <\rho_{i})$ and $h_i \in \col(\rho_{i}^{+i+3}, <\kappa_i)$.
		\item We also define the restricted collapse product to be the poset $\qo'(\vec{\rho}_{n+1})$ which is obtained by removing the top collapse poset $\col(\rho_{n}^{+n+3},<\kappa_{n})$, from $\qo(\bro)$;
		\[\qo'(\bar{\rho}) =  \col(\kappa_{-1}^{+2}, <\rho_0) \times \col(\rho_0^{+3},<\kappa_0) \times \dots \times \col(\kappa_{n-1}^{+(n-1)+3}, <\rho_{n}) \]
		Clearly, $\qo(\bar{\rho}) = \qo'(\bar{\rho}) \times \col(\rho_{n}^{+n+3},<\kappa_{n})$. 
	\end{enumerate}
	
	Like the short extenders forcing $\po$, $\bP$ is a Prikry type forcing which admits some natural decomposition properties. We adopt the relevant notational conventions which were used to analyze $\po$. Therefore, for a condition ${\bp} = \la {\bp}_n \mid n < \omega\ra$ and $m < \omega$ we define ${\bp}\uhr m = \la {\bp}_n \mid n < m\ra$ and ${\bp}\dhr m =\la {\bp}_n \mid n \geq n\ra$. 
	We also define
	 $\bP_{< m} = \{ \bp\uhr m \mid \bp \in \bP\}$ and $\bP_{\geq m} = \{ \bp\dhr m \mid \bp \in bP\}$.
	The forcing $\bP/\bp$ breaks into the product 
	$\bP_{<\ell^{\bp}}/\bp\uhr \ell^{\bp} \times \bP_{\geq \ell^{\bp}}/\bp\dhr \ell^{\bp}$. We note that $\bP_{<\ell^{\bp}}/\bp\uhr \ell^{\bp} \cong \qo(\bro^{\bp})$ and that the direct extension order of $\bP_{\geq \ell^{\bp}}/\bp\dhr \ell^{\bp}$ is $\kappa_{\ell^{\bp}}$-closed. 
	
	A crucial component in the proof of the Prikry Lemma for $\bP$, is the ability to collect and amalgamate information from the different collapse posets $\qo(\vec{\rho}_m)$ (or $\qo'(\vec{\rho}_m)$) without deciding on the initial segment $\vec{\rho}_m$ of the generic Prikry sequence (e.g. see the proof of the Prikry Lemma in \cite{Unger-SEBFcol},\cite{SinUng-CombAleph})
	Isolating this part of the argument gives rise to the following assertion. 
	\begin{lemma}\label{lem-meetCollapse}
		Let $\bp \in \bP$ be a condition in $\bP$ and $n \geq \ell^{\bp}$. Suppose that
		$\{ D(\vec{\rho}_n) \mid \vec{\rho}_n \in \np{A}_{\bp\uhr n}\}$ is a family of sets so that each $D(\vec{\rho}_n)$ is a dense open in $\qo(\vec{\rho}_n)$. Then there exists a direct extension $\bq \geq^* \bp$ such that for every $\vec{\rho}_* \in \prod_{\ell^{\bp}\leq k < n}A^{\bq}_k$, the  condition $(\bq\uhr n) \fr \vec{\rho}_*$ belongs to $D(\vec{\rho}_{\bp}\fr \vec{\rho}_*)$. 
	\end{lemma}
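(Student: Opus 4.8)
The plan is to carry out a standard "one-point-at-a-time, amalgamate the collapse parts" induction on $k$ from $\ell^{\bp}$ up to $n$, exploiting the closure of the Levy collapse posets relative to the measures $E_k(\kappa_k)$. First I would reduce to the case $n = \ell^{\bp}+1$: given the lemma for single one-point extensions, one iterates it $n-\ell^{\bp}$ many times, each time pulling the dense sets $D(\vec{\rho}_n)$ back along the already-chosen partial Prikry segment and using that finitely many applications of a $\leq^*$-improvement compose into one (the direct extension order is $\kappa_{\ell^{\bp}}$-closed on $\bP_{\geq \ell^{\bp}}$ by the remarks preceding the lemma, so the finite composition is unproblematic). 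So the core is: given $\bp$ and, for each $\rho \in \np{A}_{\ell^{\bp}}^{\bp}$ (more precisely each $\vec{\rho}_{\ell^{\bp}} {}^{\frown}\la\rho\ra \in \np{A}_{\bp\uhr(\ell^{\bp}+1)}$), a dense open $D(\vec{\rho}_{\bp}{}^{\frown}\la\rho\ra) \subseteq \qo(\vec{\rho}_{\bp}{}^{\frown}\la\rho\ra)$, produce $\bq \geq^* \bp$ so that $(\bq \uhr (\ell^{\bp}+1)) {}^{\frown}\la\rho\ra \in D(\dots)$ for every admissible $\rho \in A_{\ell^{\bp}}^{\bq}$.

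Here is how I would do that core step. Write $\ell = \ell^{\bp}$, $U = E_\ell(\kappa_\ell)$, and note $\qo(\vec{\rho}_{\bp}{}^{\frown}\la\rho\ra) = \qo'(\vec{\rho}_{\bp}{}^{\frown}\la\rho\ra) \times \col(\rho^{+\ell+3},<\kappa_\ell)$, where the first factor $\qo'(\vec{\rho}_{\bp}{}^{\frown}\la\rho\ra)$ has size $<\kappa_\ell$ (all its coordinates are collapses below $\rho < \kappa_\ell$, hence $\le \rho^+ < \kappa_\ell$) and the second factor is $\le^*$-directed and, crucially, the ``tail'' $\col(\rho^{+\ell+3},<\kappa_\ell)$ has the same underlying poset-type for $U$-almost-all $\rho$ in the sense that $H_\ell^{\bp}(\rho)$ is a legitimate condition in it. Since $|\qo'(\vec{\rho}_{\bp}{}^{\frown}\la\rho\ra)| < \kappa_\ell = \crit(j_\ell)$ and $\kappa_\ell$ is $U$-measurable, for each fixed $g \in \col(\kappa_{\ell-1}^{+(\ell-1)+3},<\kappa_\ell)$ refining $g_\ell^{\bp}$ I can first shrink $\np{A}_\ell^{\bp}$ to $A_g \in U$ on which $g \uhr \rho \in \col(\kappa_{\ell-1}^{+\dots},<\rho)$ makes sense, then enumerate the $<\kappa_\ell$-many possible ``lower halves'' $s$ of a condition in $\qo'$ and, for each, ask whether densely many $\rho$ admit an extension of the pair $(s,H_\ell^{\bp}(\rho))$ into $D$; by $U$-completeness and the pigeonhole over $<\kappa_\ell$-many $s$, I get a single $A \in U$, $A \subseteq \np{A}_\ell^{\bp}$, together with a function $\rho \mapsto H(\rho) \ge H_\ell^{\bp}(\rho)$ in $\col(\rho^{+\ell+3},<\kappa_\ell)$, such that for every $\rho \in A$ and every lower half $s$ that appears, $(s,H(\rho))$ lies in $D(\vec{\rho}_{\bp}{}^{\frown}\la\rho\ra)$ — this is exactly the amalgamation of the collapse information across the undecided $\rho$ that the references \cite{Unger-SEBFcol} and \cite{SinUng-CombAleph} isolate. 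Taking $\bq$ to agree with $\bp$ off coordinate $\ell$, and at coordinate $\ell$ to be $\la A, g_\ell^{\bp}, H\ra$ (after possibly also refining $g_\ell^{\bp}$ itself once, which only requires one more pass), gives the desired direct extension.

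The main obstacle, and the place requiring genuine care rather than bookkeeping, is the amalgamation over the \emph{bottom} collapse $\col(\kappa_{\ell-1}^{+(\ell-1)+3},<\rho)$ of each $\qo(\vec{\rho}_{\ell+1})$: this factor \emph{depends on $\rho$} (its forcing is into $<\rho$), so one cannot simply fix a condition there before choosing $\rho$, and one must instead carry a \emph{function} $\rho \mapsto g_\ell(\rho)$ and argue that the genericity demand ``$(s,H(\rho)) \in D$'' can be met uniformly — this is handled precisely by the requirement built into conditions of $\bP$ that $g_\ell \in \col(\kappa_{\ell-1}^{+\dots},<\kappa_\ell)$ is a \emph{single} condition whose restriction $g_\ell \uhr \rho$ works for all $\rho$ in the measure-one set, so that the $\rho$-dependence is only in the \emph{upper} half $H_\ell(\rho)$, which lives in posets that are $\le^*$-isomorphic (as abstract posets) for $U$-almost all $\rho$ and can therefore be handled by a single diagonal choice using normality of $U$. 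Once this split (fixed bottom condition $g_\ell$, varying top function $H_\ell$) is observed, the measurability of $\kappa_\ell$ and the $<\kappa_\ell$-size of $\qo'$ do the rest, and the induction on $k$ closes using $\kappa_{\ell^{\bp}}$-closure of $\le^*$ on the tail poset.
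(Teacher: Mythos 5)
The paper itself does not prove this lemma (it delegates the argument to \cite{Unger-SEBFcol} and \cite{SinUng-CombAleph}), so your proposal has to be measured against that standard amalgamation argument, whose skeleton you do reproduce: work level by level, split $\qo(\vec{\rho}\fr\la\rho\ra)$ into the small lower product and the highly closed top collapse, and combine closure with the completeness of $E_\ell(\kappa_\ell)$. Two points, however, do not survive scrutiny. First, ``$(s,H(\rho))$ lies in $D(\vec{\rho}_{\bp}\fr\la\rho\ra)$ for every lower half $s$ that appears'' cannot be arranged: sections of a dense open set are only open, and what the $\rho^{+\ell+3}$-closure of $\col(\rho^{+\ell+3},<\kappa_\ell)$ buys (running once through the fewer than $\rho^{+\ell+3}$ many elements of $\qo'(\vec{\rho}_{\bp}\fr\la\rho\ra)$ and taking lower bounds) is an $H(\rho)\geq H^{\bp}_\ell(\rho)$ whose section is a \emph{dense} open subset of $\qo'(\vec{\rho}_{\bp}\fr\la\rho\ra)$. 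Consequently the final condition cannot ``agree with $\bp$ off coordinate $\ell$'': you must also strengthen the collapse coordinates of $\bp$ below $\ell$ so that they enter this dense section, and for general $n>\ell^{\bp}+1$ the recursion has to run from the top level downwards, producing at each stage derived dense open subsets of $\qo(\vec{\rho}_k)$; a bottom-up iteration of the one-point case is not meaningful, since $D(\vec{\rho}_n)$ depends on the whole sequence and not just on the next point.

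The genuine gap sits exactly where you flag the main obstacle, the coordinate $\col(\kappa_{\ell-1}^{+(\ell-1)+3},<\rho)$, and your resolution of it is an assertion rather than an argument: that conditions of $\bP$ carry a single $g_\ell$ is the reason something must be proved, not the proof. A priori, for different $\rho$ the dense set $D(\vec{\rho}_{\bp}\fr\la\rho\ra)$ calls for different strengthenings $g_\rho\in\col(\kappa_{\ell-1}^{+(\ell-1)+3},<\rho)$ of this coordinate; these cannot be stabilized by completeness (there are $\kappa_\ell$ many possible values), and normality does not apply in the form you invoke (the values are not ordinals, and the claimed $\leq^*$-isomorphism of the posets $\col(\rho^{+\ell+3},<\kappa_\ell)$ is both false and beside the point, since $H$ is allowed to depend on $\rho$ anyway). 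What is needed, and what the cited arguments supply, is an integration step: having chosen $H(\rho)$ as above, let $E(\rho)\subseteq\qo(\vec{\rho}_{\bp})\times\col(\kappa_{\ell-1}^{+(\ell-1)+3},<\rho)$ be the resulting dense section, pass to the ultrapower by $E_\ell(\kappa_\ell)$ so that $[\rho\mapsto E(\rho)]$ is a dense open subset of $\qo(\vec{\rho}_{\bp})\times\col(\kappa_{\ell-1}^{+(\ell-1)+3},<\kappa_\ell)$, use the $\kappa_{\ell-1}^{+(\ell-1)+3}$-closure of the second factor against the $\leq\kappa_{\ell-1}$-sized first factor to find a single $g^*\geq g^{\bp}_\ell$ whose section is dense in $\qo(\vec{\rho}_{\bp})$, and then reflect by {\L}o\'s and $\kappa_\ell$-completeness to a single measure-one set of $\rho$'s on which $(s,g^*,H(\rho))\in D(\vec{\rho}_{\bp}\fr\la\rho\ra)$ for all $s$ in that dense set. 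With this uniformization step (or an equivalent diagonal argument) inserted, your outline becomes a proof; without it, the choice of the single $g$-coordinate is unjustified.
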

	
	An important consequence of Lemma \ref{lem-meetCollapse} is that it is possible to reduce any $\bP$ name of an $\omega$-sequence of bounded sets in $\kappa_\omega$ to a family of names which depend on posets $\qo(\vec{\rho}_m)$ or $\qo'(\vro_m)$ for some suitable initial segments of $\vro_m$ of the generic Prikry sequence. 
	For example, suppose that $\la \name{S_n} \mid n < \omega\ra$ is a $\bP$ name for a sequence of sets so that $\name{S_n} \subseteq \rho_n^{+n+2}$.
	Let $\bp \in \po$, and note that for every $n \geq \ell^p$ and $\vec{\rho}_* \in \prod_{\ell \leq k \leq n}A^{\bp}_k$, the direct extension order of the 
	poset $\po_{\geq (n+1)}$, above the condition $(\bp \fr \vec{\rho}_*)\dhr (n+1)$, is $\kappa_n$ closed, and thus does not add new subsets to $\rho_n^{+n+2}$. 
	We can therefore assume that for every $n < \omega$, $\name{S_n}$ depends only on $\bP_{\leq n}$. 
	This name reduction can be further improved since the poset $\col(\rho_n^{+n+3},<\kappa_n)$ (which is the top collapse component of $\qo(\vec{\rho}_{n+1})$) is also sufficiently closed to decide all names of  subsets of $\rho_n^{+n+2}$.  
	It follows that for every $n \geq \ell^{\bp}$ and $\vro_{n+1} \in A_{\bp\uhr (n+1)}$, 
	there exists a dense open $D(\vro)$ of conditions in $\qo(\vec{\rho})$ which force $\name{S_n}$ to be equal to another name $\bar{S_n}$ which depends only on the restricted collapse product $\qo'(\vro)$.
	This allows us to apply Lemma \ref{lem-meetCollapse} and consequently, obtain the following result.
	
	\begin{corollary}\label{cor-PbarSn}
		Let $\la \name{S_n}\mid n < \omega\ra$ be a sequence of $\bar{\po}$-name so that each $\name{S_n}$ is a name for a subset of $\rho_n^{+n+2}$. 
		For every $\bar{p} \in \bar{\po}$ there exists $\bq \geq^* \bar{p}$ and a sequence of functions $\la \bar{S_n} \mid \ell^{\bar{p}} \leq n < \omega\ra$ so that 
		for every $m \geq \ell^{\bar{p}}$ and $\vro_* = \la \rho_{\ell^{\bar{p}}}, \dots, \rho_m \ra \in \prod_{\ell^{\bar{p}}< k \leq m} \np{A_k}^{\bq}$, 
		\[\bq \fr \vro_* \force \name{S_m} = \bar{S_m}(\vro_{\bar{p}}\fr \vro_*),\] where $\bar{S_m}(\vro)$ is a name of the restricted product $\qo'(\vro)$ for every $\vro \in \np{A}_{\bq\uhr (m+1)}$.
	\end{corollary}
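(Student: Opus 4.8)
The plan is to deduce Corollary \ref{cor-PbarSn} from Lemma \ref{lem-meetCollapse} by a diagonal fusion of length $\omega$ over the levels $n \geq \ell^{\bar p}$. Recall the two closure facts (from the discussion preceding the corollary) that drive the per‑level reduction: once the first $n+1$ Prikry points $\vro_{n+1} = \la \rho_0,\dots,\rho_n\ra$ are specified, the forcing below the corresponding condition factors as $\qo(\vro_{n+1})$ times a tail forcing which is Prikry‑type with a $\kappa_{n+1}$‑closed direct‑extension order, and so adds no new subsets of $\rho_n^{+n+2}$; and $\qo(\vro_{n+1}) = \qo'(\vro_{n+1}) \times \col(\rho_n^{+n+3},<\kappa_n)$, whose top factor is $\rho_n^{+n+3}$‑closed and so also adds no new subsets of $\rho_n^{+n+2}$. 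Together these yield, for each $n\geq \ell^{\bar p}$ and each $\vro_{n+1} \in \np{A}_{\bar p\uhr(n+1)}$, a dense open set $D_n(\vro_{n+1}) \subseteq \qo(\vro_{n+1})$ of conditions $q$ for which the $\bP$‑condition with stem $\vro_{n+1}$ and collapse part $q$ forces $\name{S_n}$ to equal some name of the restricted product $\qo'(\vro_{n+1})$; openness holds because a forced equality persists to stronger conditions.

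With the $D_n(\vro_{n+1})$ in hand, I would obtain $\bq$ as the $\leq^*$‑limit of an increasing $\omega$‑sequence $\bar p = \bq^{\ell^{\bar p}} \leq^* \bq^{\ell^{\bar p}+1} \leq^* \cdots$ in which $\bq^{n+1}$ is produced from $\bq^{n}$ by applying Lemma \ref{lem-meetCollapse} at level $n$: with the integer there equal to $n+1$ and the family $\{ D_n(\vro_{n+1}) \mid \vro_{n+1} \in \np{A}_{\bq^{n}\uhr(n+1)}\}$, the lemma returns $\bq^{n+1}\geq^* \bq^{n}$ such that for every $\vro_* \in \prod_{\ell^{\bar p}\leq k\leq n}\np{A_k}^{\bq^{n+1}}$ the condition $(\bq^{n+1}\uhr(n+1))\fr\vro_*$ lies in $D_n(\vro_{\bar p}\fr\vro_*)$, i.e. forces $\name{S_n}$ equal to a $\qo'(\vro_{\bar p}\fr\vro_*)$‑name, which is recorded as $\bar{S_n}(\vro_{\bar p}\fr\vro_*)$. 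Since each $D_k$ is open and the sequence is $\leq^*$‑increasing, the commitments made at stages $k \leq n$ are inherited by every later term. The $\leq^*$‑limit $\bq$ exists because the direct‑extension order of $\bP/\bar p$ is countably closed --- increasing $\omega$‑chains of Levy‑collapse conditions and of the $H_k$‑functions have lower bounds, and the measure‑one sets can be intersected using the countable completeness of the $E_k(\kappa_k)$ --- and, membership in the open $D_n$ being preserved under extension, $\bq$ together with $\la \bar{S_m} \mid \ell^{\bar p}\leq m<\omega\ra$ satisfies $\bq\fr\vro_* \force \name{S_m} = \bar{S_m}(\vro_{\bar p}\fr\vro_*)$ for every $m\geq \ell^{\bar p}$ and every $\vro_* \in \prod_{\ell^{\bar p}\leq k\leq m}\np{A_k}^{\bq}$, which is the assertion of the corollary.

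The genuinely substantive point is not in the corollary itself but upstream: establishing that $D_n(\vro_{n+1})$ is dense, i.e. that the two closure facts can be invoked uniformly in the Prikry initial segment without committing to the generic Prikry sequence. That uniform amalgamation is exactly the content of Lemma \ref{lem-meetCollapse} (the analogue for $\bP$ of the $\po$‑side Lemma \ref{Lem-meetdense}) and is carried out in the paragraph preceding the corollary. Given it, the remaining work --- choosing the fusion so that successive applications of Lemma \ref{lem-meetCollapse} cohere, using openness to keep the earlier commitments, and noting that the $\omega$‑fusion has a $\leq^*$‑lower bound --- is routine bookkeeping.
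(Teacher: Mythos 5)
Your proposal is correct and follows essentially the same route as the paper: the paper's own argument is exactly the paragraph preceding Corollary \ref{cor-PbarSn} (reduce $\name{S_n}$ using the $\kappa_{n+1}$-closedness of the tail's direct-extension order together with the Prikry property, then use the closure of the top collapse $\col(\rho_n^{+n+3},<\kappa_n)$ to pass to a $\qo'(\vro_{n+1})$-name, producing dense open subsets of $\qo(\vro_{n+1})$), followed by an application of Lemma \ref{lem-meetCollapse} at each level $n$ and a $\leq^*$-fusion which the paper leaves implicit and you spell out. The only gloss in your write-up --- specifying with which (possibly strengthened) tail part the density of $D_n(\vro_{n+1})$ in $\qo(\vro_{n+1})$ is computed --- is the same gloss the paper makes when it says ``we can therefore assume $\name{S_n}$ depends only on $\bP_{\leq n}$,'' and it is absorbed by the direct extensions taken during your fusion.
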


	\subsection{A modified short extenders forcing}
	
	Before we turn to define our modified version $\po$ of the short extenders poset, let us point out a notational convention which we will use throughout this section frequently. 
	We will be working here with measure one sets with respect to measures $E_n(\alpha)$ for some $\alpha \in [\kappa_n,\kappa_n^{+n+2})$.
	It is clear that for every such $\alpha$, the measure $E_n(\alpha)$ contains the set $X_n$ of all ordinals $\nu < \kappa_n$ for which there exists a unique inaccessible cardinal $\rho$ such that $\nu \in [\rho, \rho^{+n+2})$. 
	Moreover, it is routine to verify that the map $\bar{\pi} : X_n \to \kappa_n$ which sends each $\nu \in X_n$ to $\rho$ as above, is a Rudin-Kiesler projection from $E_n(\alpha)$ to $E_n(\kappa_n)$.  We will $\bar{\pi} = \pi_{\alpha,\kappa_n}$ for every $\alpha < \kappa_n^{+n+2}$.
	The function $\bar{\pi}$ naturally extends to a map whose domain is a sequences of ordinals $\vec{\nu} = \la \nu_0,\dots, \nu_m\ra$ in $\prod_{k \leq m}X_k$ and as defined below, to a forcing projection from $\po$ to $\bP$. 
	We will frequently abuse the notation of $\bar{\pi}$, and further use it to denote its resulting natural extensions.
	
	\begin{definition}
	Conditions in the revised $\po$ are of the form $p = \la p_n \mid n < \omega\ra$ which satisfy the following conditions:
	\begin{enumerate}
		\item There exists some $\ell < \omega$ such that for every $n < \ell$, $p_n = \la f_n,\rho_n,g_n,h_n\ra$ where $\la \rho_n,g_n,h_n\ra$ satisfies condition (1) of the definition of $\bP$, and $f_n$ is a partial function from $\kappa_{\omega}^{++}$ to $\kappa_n$ of size $|f_n| \leq \kappa_\omega$. 
		\item For $n \geq \ell^p$, the components $p_n = \la a_n,\np{A_n}, A_n,f_n,g_n,H_n\ra$ are defined by induction on $n$.
		Suppose that $p\uhr n = \la p_k \mid k < n\ra$ has been defined and that $A_k \subseteq X_k$ for evey $k \geq \ell$. Let
		\[ \np{A}_{p\uhr n} = \{\rho_0\} \times \{\rho_1\} \times \dots \times \{\rho_{\ell-1}\} \times \prod_{\ell \leq k < n} \np{A_k}. \]
		$p_n = \la a_n,\np{A_n},A_n,f_n,g_n,H_n\ra$ is defined as follows.
		\begin{itemize}
			\item $\la \np{A_n}, g_n,H_n\ra$ satisfies condition (2) of the definition of $\bP$. 
			\item $f_n$ is a partial function from ${\kappa_\omega^{++}}$ to $\kappa_n$ of size $|f_n| \leq \kappa_\omega$.
			\item $a_n,A_n$ are functions. Their common domain is $\np{A}_{p\uhr n}$ and for every every $\vro \in \np{A}_{p\uhr n}$, the result of apllying $a_n$ and $A_n$ to $\bar{\rho}$ are denoted by $a_n^{\vro}$ and $A_n^{\vro}$ respectively. 
			Also, 
			we require that there exists some integer $k_n \geq 2$ so that for every $\vro \in \np{A}_{p\uhr n}$, $(a_n^{\bar{\rho}},A_n^{\bar{\rho}})$ is $k_n$-relevant pair in the sense of Definition \ref{def-relevant}.
			\item $\kappa_n \in \rng(a_n^{\vro})$ for every ${\vro} \in \np{A}_{p\uhr n}$ and $\pi_{\max(\rng(a_n^{{\vro}})), \kappa_n}``A_n^{\vro} = \np{A_n}$.
		\end{itemize}
		\item $\dom(a_n^{\vro_n}) \subseteq \dom(a_m^{\vro_m})$ whenever 
		$m \geq n \geq \ell$, $\vro_m \in \np{A}_{p\uhr m}$, $\vro_n \in \np{A}_{p\uhr n}$, and $\vro_n = \vro_m \uhr n$
		\item The sequence $\la k_n \mid n < \omega\ra$ is nondecreasing and unbounded in $\omega$.
	\end{enumerate}

	We denote $a_n,A_n,\np{A_n},f_n,g_n,h_n$ of $p$ by $a_n^p,A_n^p,\np{A_n}^p,f_n^p,g_n^p,h_n^p$ respectively. 
	Also, for $\vro_n \in \dom(a_n^p) = \dom(A_n^p)$, we denote for ease of notation
	$(a_n^p)^{\vro}$ and $(A_n^p)^{\vro}$ by $a_n^{p,\vro}$ and $A_n^{p,\vro}$ respectively.\\
	
	As before, the order relation $\leq$ of $\po$ is the derived from two basic operations of direct extension and one-point extension.
	\begin{enumerate}
		\item Given two conditions $p,q \in \po$, $q$ is a direct extension of $p$ if it satisfies the following conditions:
		\begin{itemize}
			\item $\ell^q = \ell^p$;
			\item $\rho_n^p = \rho_n^q$  and $h_n^p \leq h_n^q$ for every $n < \ell^p$;
			\item $f_n^p \subseteq f_n^q$ and $g_n^p \leq g_n^q$ for all $n < \omega$;
			\item for every $n \geq \ell^p$, $\np{A_n}^q \subseteq \np{A_n}^p$ and $H^p_n(\rho) \leq H^q_n(\rho)$ for all $\rho \in \np{A_n}^q$;
			\item for every $n \geq \ell^q$ and $\vro \in \np{A}_{q\uhr n}$, $(a_n^p)^{\vro} \subseteq (a_n^q)^{\vro}$. Furthermore, if
			$\gamma^{q,\vro}_n = \max(\rng(a_n^{q,\vro})$ and $\gamma^{p,\vro}_n = \max(\rng(a_n^{q,\vro}))$, then we require that
			$A_n^{q,\vro} \subseteq \pi_{\gamma^{q,\vro}_n,\gamma^{p,\vro}_n}^{-1}(A_n^{q,\vro})$. 
		\end{itemize}
		
		\item Given a condition $p = \la p_n \mid n < \omega\ra \in \po$, a one-point extension $p'$ of $p$ is a sequence $p' = \la p'_n \mid n < \omega\ra$ satisfying
		\begin{itemize}
			\item $\ell^{p'} = \ell^{p} + 1$;
			\item $p_n = p'_n$ for all $n \neq \ell^p$;
			\item denoting $\max(\dom(a_{\ell^p,\vro_p}))$ by $\eta$, there exists some $\nu\in A_{\ell^p}^{p,\vro_p}$ such that $p'_{\ell^p} = \la f_{\ell^p},\rho_{\ell^p},g_{\ell^p},h_{\ell^p}\ra$ where
			\begin{enumerate}
				\item $f^{p'}_{\ell^p} = f^{p}_{\ell^p} \cup \{ \la \tau, \pi_{a^{p,\vro_p}_{\ell^p}(\eta),a^{p,\vro_p}_{\ell^p}(\tau)}(\nu)\ra \mid \tau \in \dom(a^{p,\vro_p}_{\ell^p})\}$.
				\item $g^{p'}_{\ell^p} = g^p_{\ell^p}$.
				\item $\rho_{\ell^p} = \pi_{\eta,\kappa_{\ell^p}}(\nu) = \bar{\pi}(\nu)$.
				\item $h_{\ell^p}^{p'} =  H^{p}_{\ell^p}(\rho)$.
			\end{enumerate}
			\item For every $n > \ell^p$ we have that $f_n^{p'} = f_n$, $g_n^{p'}  = g_n$, $H_n^{p'} = H_n$, $a_n^{p'} = a_n^p\uhr\np{A}_{p'\uhr n}$, and $A_n^{p'} = a_n^p\uhr \np{A}_{p'\uhr n}$.
		\end{itemize}
		As usual, $p'$ is denoted by $p \fr \la \nu\ra$.
	\end{enumerate}
		\end{definition}

    We note that for every $p \in \po$ the 
	domain of $a^p_{\ell_p}$ and $A^p_{\ell^p}$ is the singleton $\np{A}_{p\uhr \ell_p} = \{ \vro_{p} \}$.

	\begin{definition}
		For a condition $p \in \po$, we define $\bar{\pi}(p)$ to be the sequence $\bar{p} = \la \bar{p}_n \mid n < \omega\ra$
		defined by $\bar{p}_n = \la \rho^p_n,g^p_n,h^p_n\ra$ for every $n < \ell^p$, and $\bar{p}_n = \la \np{A_n}^p,g^p_n,H^p_n\ra$ otherwise.
	\end{definition}

	\begin{remark}\label{remark-PprojPbar}
		The following facts are straightforward to derive from the definition of $\po$.
		\begin{itemize}
			\item  For every one-point extension $p \fr \la \nu\ra$ of $p$, $\bar{\pi}(p \fr \la \nu\ra)$ is the one-point extension of $\bar{\pi}(p) \fr \la \bar{\pi}(\nu)\ra$ of $\bar{p}$ in $\bar{\po}$. Conversely, for every ordinal $\rho$, if  $\bar{\pi}(p) \fr \la \rho\ra$ is a one-point extension of $\bar{\pi}(p)$ 
			there is $\nu \in A^{p,\vro_p}_{\ell^p}$ such that $\bar{\pi}(\nu)) = \rho$, and thus $\bar{\pi}(p \fr \la \nu\ra) = \bar{\pi}(p) \fr \la \rho\ra$.
			
			\item For every direct extension $p^*$ of $p$, $\bar{\pi}(p^*)$ is a direct extension of $\bar{\pi}(p)$ in $\bar{\po}$. Conversely, every direct extension of $\bar{\pi}(p)$ in $\bar{\po}$ is the $\bar{\pi}$ projection of some direct extension of $p$ in $\po$.
		\end{itemize}    
		It follows that $\pi : \po \to \bar{\po}$ is projection of Prikry type forcings from $\po$ onto $\bP$. 
	\end{remark}

	
	Finally, we modify the equivalence relation $\iff$ and the resulting ordering $\rar$.
	Given $p,q \in \po$ we write $p \iff q$ if and only if the following conditions hold:
	\begin{enumerate}
		\item $\ell^p = \ell^q$;
		\item $p\uhr \ell^p = q\uhr \ell^q$;
		\item $(g_n^p,H^n_p, \np{A}_{p\uhr n}) = (g_n^q,H^n_q,\np{A}_{q\uhr n})$ for all $n \geq \ell^p$; and
		\item there exists a nondecreasing sequence of integers $\la k_n^* \mid n < \omega\ra$ which is unbounded in $\omega$, such that for every $n \geq \ell^p$ and $\vro \in np{A}_{p\uhr n}$, $(a_n^{p,\vro},A_n^{p,\vro},f^p_n) \iff_{n,k^*n} (a_n^{q,\vro},A_n^{q,\vro},f^q_n)$ in the sense of Definition \ref{def-equiv}. 
	\end{enumerate}
	
	As before, we define $\rar$ on $\po$ as the closure of the operations $\leq$ and $\iff$.
	The modified short extenders forcing $\po$ satisfies all key properties which are satisfied by the poset $\po$ defined in the previous section.
	\begin{lemma}${}$
		\begin{enumerate}
			\item $(\po,\leq,\leq^*)$ is  Prikry type forcing.
			\item The identity function is a forcing projection from $(\po,\leq)$ to $(\po,\rar)$.
			\item $(\po,\rar)$ satisfies ${\kappa_\omega^{++}}$.c.c. 
		\end{enumerate}
	\end{lemma}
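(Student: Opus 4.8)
The plan is to follow the three-part development of Section \ref{section-thm1}, now viewing $\po$ as a fiberwise family of short-extenders forcings sitting over the diagonal-Prikry-with-collapses poset $\bP$. The two extra ingredients that handle the new layer are the projection $\bar{\pi}:\po\to\bP$ of Remark \ref{remark-PprojPbar} and the collapse-amalgamation Lemma \ref{lem-meetCollapse}, which together let one separate the bookkeeping for the Prikry points and interleaved Levy collapses from the bookkeeping for the extender indices. Two facts, immediate from the definitions, are used throughout: for fixed $n$ the index set $\np{A}_{p\uhr n}$ has size below $\kappa_n$ (each $\np{A_k}$, $k<n$, has size $\kappa_k$), and the direct-extension order of the part of $\po$ above level $n$ is at least $\kappa_{n-1}^+$-closed.

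\emph{Prikry property.} Given a statement $\sigma$ and $p\in\po$, I would argue by induction on $n\geq\ell^p$, at stage $n$ leaving the first $n$ Prikry points unspecified. For each fiber $\vro\in\np{A}_{p\uhr n}$, once the finitely many Levy collapse pieces below $\kappa_n$ have been decided (possible by the closure of $\col(\rho^{+n+3},<\kappa_n)$ and of the tail, as in the name-reduction leading to Corollary \ref{cor-PbarSn}), the set $A_n^{p,\vro}$ is an ordinary one-step extender-forcing measure-one set, so the Gitik-style diagonalization over $\nu\in A_n^{p,\vro}$ --- using $\kappa_n$-directedness of $\leq_{E_n}$ and the good-ordinal machinery of Lemma \ref{lem-Observationsgoodness} --- yields a single shrinking homogeneous for the decision about $\sigma$. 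Since there are fewer than $\kappa_n$ fibers and the tail $\leq^*$-order is sufficiently closed, these fiberwise shrinkings amalgamate into one direct extension; interleaving this with the collapse-side shrinkings of Lemma \ref{lem-meetCollapse}, iterating over $n$, and diagonalizing along the resulting $\leq^*$-increasing sequence of partial extensions produces $p^*\geq^* p$ deciding $\sigma$. The same scheme applies to $\po_{<m}$ and $\po_{\geq m}$.

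\emph{Identity as a forcing projection.} This is the analogue of Theorem \ref{thm-SEFchain}(1). The relation $\iff$ fixes the finite part $p\uhr\ell^p$ and all collapse data $(g_n^p,H_n^p,\np{A_n}^p)$, and only replaces, fiber by fiber over $\vro\in\np{A}_{p\uhr n}$, the triple $(a_n^{p,\vro},A_n^{p,\vro},f_n^p)$ by one of the same relative $(n,k_n^*)$-type. Hence the one-point and direct-extension operations commute with $\iff$ exactly as in Section \ref{section-thm1}: given $p\iff p'$ and $q'\geq p'$, the type-preserving correspondence on extender indices supplied by Lemma \ref{lemma-typequiv} transports the finitely many one-point extensions and the terminal direct extension witnessing $q'\geq p'$ back across the equivalence. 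As in the discussion following Theorem \ref{thm-SEFchain}, this yields that for every dense open $D\subseteq(\po,\rar)$ and every $p\in\po$ there is $p''\geq p$ in $\leq$ lying in $D$, and hence that the identity is a forcing projection of $(\po,\leq)$ onto $(\po,\rar)$; in particular $(\po,\rar)$ adds no new bounded subsets of $\kappa_\omega$ and preserves $\kappa_\omega^+$.

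\emph{The ${\kappa_\omega^{++}}$-chain condition, and the main difficulty.} Given $\{p_\alpha\mid\alpha<{\kappa_\omega^{++}}\}$, a $\Delta$-system and pressing-down argument gives a subfamily of full size on which $\ell^{p_\alpha}$, the finite part, the collapse data, the integers $k_n$, and --- fiber by fiber over a common $\np{A}_{p_\alpha\uhr n}$ --- the sets $A_n^{p_\alpha,\vro}$, the ranges $\rng(a_n^{p_\alpha,\vro})$, and the root parts of the domains all agree, the non-root parts of successive domains being mapped by the $a_n^{p_\alpha,\vro}$ to the same extender indices. Then, inductively on $n$ and simultaneously over all fibers $\vro$, one passes to an $\iff$-equivalent $p'_\alpha$ whose extender-index block over each $\vro$ is moved below the pertinent good ordinal $\gamma$ exactly as in the c.c.\ sketch of Section \ref{section-thm1} --- using that the relativized type $t^{r_n}_{n,k_n-1}$ of that block lies in $M_{n,k_n}(\gamma)$, together with Lemmas \ref{lem-Observationsgoodness} and \ref{lemma-typequiv} --- so that $a_n^{p'_\alpha,\vro}\cup a_n^{p_\beta,\vro}$ is order-preserving and $(k_n-1)$-relevant for every $\vro$; then $p'_\alpha$ and $p_\beta$ are $\leq$-compatible, so $p_\alpha$ and $p_\beta$ are $\rar$-compatible. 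The main obstacle is the \emph{coherence} of these good-ordinal replacements across the (fewer than $\kappa_n$ many) fibers: the move must respect $\dom(a_n^{\vro_n})\subseteq\dom(a_m^{\vro_m})$ whenever $\vro_n=\vro_m\uhr n$, and the per-fiber replacements (and, in the Prikry argument, the per-fiber measure-one shrinkings) must be gluable using the closure of the tail direct-extension order. Once this bookkeeping is arranged, each of the three parts reduces fiberwise to the matching argument of Section \ref{section-thm1}, combined with Lemma \ref{lem-meetCollapse} for the interleaved collapses.
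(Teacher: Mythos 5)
Your overall architecture is the one the paper intends: for (3) the paper itself says only that one runs the ${\kappa_\omega^{++}}$.c.c.\ argument of Section \ref{section-thm1} ``fiberwise'', replacing $a_n^{p_\alpha,\vro}$ by a type-equivalent block for \emph{every} $\vro \in \np{A}_{p\uhr n}$, which is exactly what you do (and note that your worry about coherence of domains is largely vacuous here, since the $\iff$-replacement changes only ranges, never the domains, so condition (3) of the definition of $\po$ is untouched); and for (2) the paper offers nothing beyond the analogue of Theorem \ref{thm-SEFchain}(1), which is what you sketch.

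The one place where your write-up diverges from the paper in substance is part (1). The paper does not reprove the Prikry property: it cites Unger's note on Gitik's gap-2 short extender forcing with collapses, and it isolates as the \emph{crucial} ingredient that the interleaved collapse sitting above $\kappa_n$, namely $\col(\kappa_n^{+n+3},<\kappa_{n+1})$, is $\lambda_n^+$-closed, i.e.\ closed \emph{beyond the length of the extender} $E_n$. Your sketch never uses this fact; the closure bounds you do invoke (the tail direct-extension order being ``at least $\kappa_{n-1}^+$-closed'', and the closure of $\col(\rho^{+n+3},<\kappa_n)$ together with Lemma \ref{lem-meetCollapse}, which only handles the collapse product \emph{below} $\kappa_n$) are not what makes the level-$n$ step go through. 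The point is that after a one-point extension by $\nu\in A_n^{p,\vro}$ the decision of $\sigma$ still depends on the coordinates above level $n$, including $g_{n+1}$; to stabilize the ``Gitik-style diagonalization'' over the level-$n$ extender data --- whose indices range over $\lambda_n=\kappa_n^{+n+2}$, not merely over $\kappa_n$ --- one must amalgamate $\lambda_n$-many conditions of the upper collapse, and this is exactly where $\lambda_n^+$-closure is needed and where a merely $\kappa_n$-closed (let alone $\kappa_{n-1}^+$-closed) tail would not suffice. So either import that closure fact explicitly into your induction, or do as the paper does and defer this part to the cited proof of the Prikry Lemma for short extender forcing with collapses; as written, the Prikry-property paragraph glosses over the step the paper flags as the heart of the matter.
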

	The incorporation of interleaved collapse posets between the points of the $\kappa_n$s is standard. The fact that the the collapse poset between $\kappa_n$ and $\kappa_{n+1}$ is $\lambda_n^+$-closed (i.e., it is closed beyond the length of the extender $E_n$) is crucial for the proof of the Prikry Lemma. We refer the reader to \cite{Unger-EBFcollapse} for a proof of the Prikry Lemma for short extenders forcing with collapses. 
	The fact $(\po,\rar)$ satisfies ${\kappa_\omega^{++}}$.c.c follows from a similar argument to the one sketched in Section \ref{section-thm1}, where instead of using the type to replace $a_n= a_n^{p_\alpha}$ with an equivalent $a_n'$  once,
	we do it for $a_n^{\bar{\rho}}$ for every $\bar{\rho} \in \np{A}_{p\uhr n}$. 
	
	\subsection{Proof of Theorem \ref{thm2}}
	Suppos that $\la \kappa_n \mid n < \omega\ra$ is an increasing sequence of cardinals in $V$ and $n < \omega$, $j_n  :V_{\kappa_n+1} \to N_n$ is a $\kappa_n^{+n+3}$-strong embedding. 
	Let $E_n$ be the $(\kappa_n,\kappa_{n}^{+n+2})$ embedding derived from $j_n$. 
	Force with the modified short extenders forcing $(\po,\rar)$, and let $G \subseteq \po$ be a generic filer over $V$. 
	By Remark \ref{remark-PprojPbar}, the map $\bar{\pi} : \po \to \bar{\po}$ is a focring projection. Therefore the set $\bar{G} = \bar{\pi}``G \in V[G]$ is $\bP$ generic filter over $V$. 
	The intermediate generic extension $V[\bar{G}]$ contains the diagonal Prikry generic sequence $\vec{\rho} = \la \rho_n \mid n < \omega\ra$ and collapse generic filters for the intervals $(\kappa_{n-1}^{+n+3}, \rho_n)$ and $(\rho_n^{+n+3},\kappa_n)$, for every $n < \omega$. It is therefore clear that in $V[\bar{G}]$, the sequnece $\la \rho_n^{+n+2} \mid n < \omega\ra$ forms an infinite subset $\{ \omega_{s_n} \mid n < \omega\}$ of the set $\{ \omega_n \mid n < \omega\}$. 
	Fix a regular cardinal $\mu < \kappa_{\omega} = \aleph_\omega^{V[\bar{G}]}$ and $k < \omega$ so that $\rho_k > \mu$. We claim that for every sequence 
	$\vec{S} = \la S_n \mid k \leq n < \omega\ra$ of statonary sets $S_n \subseteq \rho_n^{+n+2} \cap \cof(\mu)$ is $V[\bar{G}]$, $\vec{S}$ is tightly stationary in $V[G]$. 
	We follow the steps of the proof of Theorem \ref{thm1} and fix, in $V$, 
	a sequence $\vec{a}^\mu$ of length ${\kappa_\omega^{++}}$, such that
	$S_{\vec{a}^\mu} \cap \cof(\mu)$ is stationary in ${\kappa_\omega^{++}}$. Since $(\po,\rar)$ satisfies ${\kappa_\omega^{++}}$.c.c,   $S_{\vec{a}^\mu} \cap \cof(\mu)$ remains stationary in $V[G]$. As explained in the proof of Theorem \ref{thm1}, the last property allows us to reduce the tight stationarity assertion concerning $\vec{S}$ in $V[G]$, to proving the following result: For every closed unbounded set $C \subseteq {\kappa_\omega^{++}}$, in $V$, and every condition $p \in \po$, there exists some $q^* \geq p$ and $\delta \in C \cap S_{\vec{a}^\mu} \cap \cof(\mu)$ such that 
	$q^* \force \delta \text{ is a continuity point of } \name{\vec{t}}, \text{ and } \name{t_\delta}(n) \in \name{S_n} \text{ for almost all } n < \omega.$
	
	To this end, we work in $V$ and fix a condition $p \in \po$ and a closed unbounded set $C \subseteq {\kappa_\omega^{++}}$. By extending $p$ if necessary, we may assume that  $\ell^p \geq k+1$, $\rho_k^p > \mu$, and  that
	$p \force  \forall n \geq k. \name{S_n} \subseteq \cof(\mu)$.
	By Corollary \ref{cor-PbarSn}, and the fact $\pi$ projects the poset $(\po,\leq^*)$ onto $(\bar{po},\leq^*)$ (Remark \ref{remark-PprojPbar}), 
	there exists a direct extension $q$ of $p$ and sequence of functions $\la \bar{S}_n \mid \ell^{p} \leq n < \omega\ra$ so that 
	for every $m \geq \ell^{p}$ and $\vro_* = \la \rho_{\ell^{p}}, \dots, \rho_m \ra \in \prod_{\ell^{\bar{p}}< n \leq m} \np{A_n}^{q}$, 
	$\bar{q} \fr \vro_* \force_{\bar{\po}} \name{S_m} = \bar{S}_m(\vro_p \fr \vro_*)$, where
	$\bar{S}_m(\bar{\rho})$ is a name of the restricted finite product $\qo'(\bar{\rho})$ of a stationary subset of $\rho_m^{+m+2}$.
	We point out that for each $m \geq \ell^p$, the domain of the function $\bar{S}_m$ coincides with the product $\np{A}_{q\uhr (m+1)} = \np{A}_{q\uhr m} \times \np{A}^{q}_m$. 
	
	We proceed to define sets of ordinals which will be needed for the construction of a desired extension $q^*$ of $q$.
	We first define a subset $a$ of ${\kappa_\omega^{++}}$ by
	\[a = \left(\cup_{n<\omega}\dom(f_n^q)\right) \bigcup \left(\cup_{n<\omega} \cup_{\bar{\rho}\in \np{q\uhr n}} \dom(a^{q,\bar{\rho}}_n)\right).\]
	$a$ has size $\kappa_\omega$ and is therefore bounded in ${\kappa_\omega^{++}}$.
	Similarly, for every $n \geq \ell^p$ we define a subset $r_n$ of $\kappa_n^{+n+2}$ by
	\[r_n = \bigcap_{\vro \in \np{A}_{q\uhr n}} \rng(a_n^{q,\bar{\rho}}).\] 
	$|r_n| < \kappa_n$ because $|\np{A}_{q\uhr n}| = \kappa_{n-1}$. In particular, $r_n$ is bounded in $\kappa_n^{+n+2}$.
	Next, we fix a increasing and continuous sequence $d = \la \delta(i) \mid i \leq \mu\ra$ of ordinals in ${\kappa_\omega^{++}} \setminus (\sup(a)+1)$, so that $\delta(\mu) \in S_{\vec{a}^\mu} \cap C$.  We also fix some $\tau \in {\kappa_\omega^{++}}$ above $\delta(\mu)$. In the constructing the final condition $q^*$, given below, we will add the collection $d \cup \{\tau\}$ to $\dom(a_n^{q^*,\vro})$ for every relevant $\vro$.
	
	$q^*$ is constructed in $\omega$ many steps. We will define for each $k \geq \ell^p$ a condition segments $q^k \in \po_{< k}$ and guarantee that the following conditions hold: (1) $q^k \geq^* q\uhr k$ for all $k \geq \ell^p$; and (2) $q^{k_2}\uhr k_1 \geq^* q^{k_1}\uhr k_1$ for every $k_1 < k_2$. 
	We start by taking $q^{\ell^p} = q\uhr \ell^p$. 
	Suppose that $q^n$ has been defined. Note that for every $\vro_n \in \np{A}_{q^n}$ and $\rho \in \np{A^q_n}$ the name $\bar{S}_n(\vro_n\fr \la \rho\ra)$ is defined. 
	Aplly $j_n$, and consider the function $T_n$ defined by $\dom(T_n) = \np{A}_{q^n}$ and  $T_n(\vro_n) = j_n(\bar{S}_n)(\vro_n \fr \la \kappa_n\ra)$. 
	By the elementarity of $j_n$, $T_n(\vro)$ is a $j_n(\qo')(\vro \fr \la \kappa_n\ra)$-name for a stationary subset of $\kappa_n^{+n+2}$. 
	Furthermore, the fact that $j_n : V_{\kappa_n+1} \to N_n$ is $\kappa_n^{+n+3}$-strong implies that $N_n$ contains every closed unbounded subset of $\kappa_n^{+n+2}$, and in particular, the set of all $n$-good ordinals below $\kappa_n^{+n+2}$. 
	It follows that for every $\vro_n \in \np{A}_{q^n}$, there is a dense open set $D(\vro_n) \subseteq \qo(\vro_n)$ of conditions  $z \in \qo(\vro_n)$ for which there it $g_z \in \col(\kappa_{n-1}^{n+2},<\kappa_n)$,  
	and a increasing and continuous sequence $d_n^{\vro_n} = \la \delta^{\vro_n}_n(i) \mid i \leq \mu\ra \subseteq \kappa_n^{+n+2} \setminus (\sup(r_n)+1)$ of $n$-good ordinals,
	such that 
	\[z \fr g_z \force_{\qo'(\vro_n\fr \la \kappa_n\ra)} \delta_n^{\vro_n}(\mu) \in T_n(\vro_n)\]
	Moreover, note that there are only $\kappa_{n-1}$ many relevant sequence $\vro_n$ and conditions $z \in \qo'(\vro_n)$, and the collapse poset $\col(\kappa_{n-1}^{n+2},<\kappa_n)$ is a $\kappa_{n-1}^{+n-2}$ closed. It is therefore routine to form a single condition $g_n^* \in  \col(\kappa_{n-1}^{n+2},<\kappa_n)$, extending $g^{q}_n$ (i.e., $g^q_n$ belongs to the $n$-th component of the condition $q$) such that  for all $\vro_n \in \np{A}_{q^n}$ and $z \in D(\vro_n)$, 
	\[z \fr g^*_n \force_{\qo'(\vro_n\fr \la \kappa_n\ra)} \delta_n^{\vro_n}(\mu) \in T_n(\vro_n).\]
	Let $\bq^n = \bar{\pi}(q^n) \in \bar{\po}_{<n}$. 
	By Lemma \ref{lem-meetCollapse} there exists a direct extension $\bt \geq^* \bq^n$
	such that for every $\vro_n \in \np{A}_{\bt}$, $\bt  \fr \vro_n \in D(\vro_n)$. 
	Let $t$ be a direct extension of $q^n$ in $\po_{<n}$ so that $\bar{\pi}(t) = \bt$, 
	and define $q^{n+1}\uhr n = t$. 
	It remains to define $q^{n+1}_n$. Before that, we define two auxiliery components $a_n^*$ and $A_n^*$ as follows:
	\begin{itemize}
		\item $\dom(a_n^*) = \np{A}_{t}$, and for every $\vro_n \in \dom(a^*_n)$, 
		let $a_n^{*,\vro_n} = a_n^{q,\vro} \cup \{ \la \delta(i),\delta_n^{\vro_n}(i) \ra \mid i \leq \mu \} \cup \{ \la \tau,\tau_n^{\vro}\ra\}$.
		Where, here, $\tau_n^{\vro_n}$ is some $n$-good ordinal which is an upper bound to the set $\rng(a_n^{q,\vro_n}) \cup \{ \delta^{\vro_n}_n(i) \mid i \leq \mu\}$ in the Rudin-Kiesler ordering $\leq_{E_n}$.
		
		\item $\dom(A^*_n) = \np{A}_{t}$. For every $\vro_n \in \dom(A^*_n)$,
		$A_n^{*,\vro_n}$ is defined to be the set of all $\nu \in \pi_{\tau_n^{\vro_n},\max(a_n^{q,\vro_n})}^{-1}(A_n^{q,\vro_n})$ which satisfy the following conditions:
		\begin{enumerate}
			\item The condition $g^*_n \in \col(\kappa_{n-1}^{+n+2},<\kappa_n)$ belongs to $\col(\kappa_{n-1}^{+n+2},<\rho_\nu)$, where $\rho_\nu = \bar{\pi}(\nu)$;
			\item For every $\bro \in \np{A}_{q'}$, the condition $(q' \fr \bro) * g^*_n$ of $\qo(\bro) \times \col(\kappa_{n-1}^{+n+2},<\rho_\nu) = \qo'(\bro\fr \la \rho_\nu\ra)$ forces the statements
			\[``\pi_{\tau_n^{\bro},\delta_n^{\bro}(\mu)}(\nu) \in \bar{S_n}(\bro \fr \la \rho_\nu\ra)`` \]
			and
			\[`` \la \pi_{\tau_n^{\bro},\delta_n^{\bro}(i)}(\nu) \mid i \leq \mu\ra \text{ is an increasing and continuous sequence } `` \]
		\end{enumerate}
	\end{itemize}
	It is straightforward to verify that $a_n^*$ and $A_n^*$ are well defined components, using our choice of $\bt$ and the sequences $\la \delta_n^{\vro_n}(i) \mid i \leq \mu\ra$, $\vro_n \in \np{A}_{\bt}$. 

	We turn to define $q^{n+1}_n = \la a_n,\np{A_n},A_n,f_n,g_n,H_n \ra$. We define each component in turn:
	\begin{enumerate}
		\item $a_n = a_n^*$;
		\item $\np{A_n} = np{A_n}^q \cap \left(\bigcap_{\vro_n \in \np{A}_{t}}\bar{\pi}``A_n^{*,\vro_n}\right)$;
		\item $\dom(A_n) = \np{A}_{t}$ and for every $\vro_n \in \dom(A_n)$, $A_n^{\vro} = A_n^{*,\vro_n} \cap \bar{\pi}^{-1}(\np{A_n})$;
		\item $f_n = f_n^q$;
		\item $g_n = g_n^*$;
		\item $H_n = H_n^q$.
	\end{enumerate}
	It is straightforward to verify that $q^*$ is a direct extension of $q$ in $\po$. Our choice of sequences $\la \delta_n^{\vro_n}(i) \mid i \leq \mu\ra$, $\vro_n \in \np{A}_{\bt}$ for every $n \geq \ell^{p}$ and $\vro_n \in \np{A}_{q^*\uhr n}$ guarantee that for every $n \geq \ell^{q}$ and a sequence
	$\vec{\nu} = \la \nu_{\ell^q}, \dots \nu_n\ra$, if $q^* \fr \vec{\nu}$ is a valid extension on of $q^*$ then it must force that $t_{\delta(\mu)}(n) \in \name{S_n}$ is a limit point of the increasing sequence $\la t_{\delta(i)}(n) \mid i < \mu\ra$.
	We conclude that $q^*$ forces that $t_\delta$ is a continuity point of $\vec{t}$ and that $t_\delta(n) \in \name{S_n}$ for almost all $n < \omega$. 
	 \qed{Theorem \ref{thm2}}
	
	\providecommand{\bysame}{\leavevmode\hbox to3em{\hrulefill}\thinspace}
	\providecommand{\MR}{\relax\ifhmode\unskip\space\fi MR }
	\providecommand{\MRhref}[2]{%
		\href{http://www.ams.org/mathscinet-getitem?mr=#1}{#2}
	}
	\providecommand{\href}[2]{#2}

\end{document}

\cite{Gitik-EBF1}
\cite{Gitik-HB}
\cite{GitUng-SEF}
\cite{Unger-SEBFcol}
\cite{CFM-CanI}
\cite{CFM-CanStrII}
\cite{She-CarAri}
\cite{ForMag-MS}
\cite{ForMag-veryweak}
\cite{Baum}
\cite{chen-treelikeTS}
\cite{CheNee-TS}
\cite{BN-MSI}
\cite{Unger-EBFcollapse}
